\newcommand{\mat}{\mathbf{mat}}
\newcommand{\blkd}{{\texttt{blkdiag}}}
\newcommand{\mc}[1]{\mathcal {#1}}
\newcommand{\m}{\!*\!{\!_M}}
\newcommand{\rg}{{\mathscr{R}}_M}
\newcommand{\mbc}{\mathbb{C}}
\newcommand{\nl}{{\mathscr{N}}_M}
\definecolor{astral}{RGB}{46,116,181}
\newtheorem{theorem}{Theorem}[section]
\newtheorem{corollary}[theorem]{Corollary}
\newtheorem{proposition}[theorem]{Proposition}
\newcommand{\ep}{\scriptsize\mbox{\textcircled{$\dagger$}}}
\newtheorem{definition}[theorem]{Definition}
\newtheorem{example}[theorem]{Example}
\newtheorem{remark}[theorem]{Remark}
\definecolor{lime}{HTML}{A6CE39}
\definecolor{lightblue}{rgb}{0.0, 0.0, 0.5}
\DeclareRobustCommand{\orcidicon}{%
	\begin{tikzpicture}
	\draw[lime, fill=lime] (0,0)
	circle [radius=0.16]
	node[white] {{\fontfamily{qag}\selectfont \tiny ID}};
	\draw[white, fill=white] (-0.0625,0.095)
	circle [radius=0.007];
	\end{tikzpicture}
	\hspace{-2mm}
}
\xdef\csname orcid\x\endcsname{\noexpand\href{https://orcid.org/\csname orcidauthor\x\endcsname}{\noexpand\orcidicon}}
\newcommand{\D}{{\mathrm D}}
\newcommand{\ind}{{\mathrm {ind}}}
\newcommand{\Ind}{{\mathrm {Ind}}}
\newcommand{\ra}[1]{\mathrm{rank}({#1})}  
\begin{document}
\begin{frontmatter}
\title{Computation of tensors generalized inverses under  $M$-product and applications
}
\author{Jajati Keshari Sahoo$^\dagger$$^a$, Saroja
  Kumar Panda$^\dagger$$^b$, Ratikanta Behera$^c$, Predrag S. Stanimirovi\'c$^{d,e}$}

\address{
 $^{\dagger}$ Department of Mathematics,
BITS Pilani K.K. Birla Goa Campus, Goa, India
\\\textit{E-mail\,$^a$}: \texttt{jksahoo@goa.bits-pilani.ac.in}
\\\textit{E-mail\,$^b$}: \texttt{p20170436@goa.bits-pilani.ac.in }\\$^{c}$Department of Computational and Data Sciences, Indian Institute of Science, Bangalore, India.\\
   \textit{E-mail\,$^c$}: \texttt{ratikanta@iisc.ac.in}\\
$^{d}$ University of Ni\v s, Faculty of Sciences and Mathematics, Ni\v s, Serbia\\
\textit{E-mail\,$^d$}: \texttt{pecko@pmf.ni.ac.rs}
}

\begin{abstract}
This paper introduces notions of the Drazin and core-EP inverses on tensors via $M$-product. We propose a few properties of the Drazin and { core-EP inverses} of tensors, as well as effective tensor-based algorithms for calculating these inverses. In addition, definitions of composite generalized inverses are presented in the framework of the $M$-product, including CMP, DMP, and MPD inverses of tensors. Tensor-based higher-order Gauss-Seidel and Gauss-Jacobi iterative methods are designed. Algorithms for these two iterative methods for solving multilinear equations have been developed. Certain multilinear systems are solved using the Drazin inverse, core-EP inverse, and composite generalized inverses, such as CMP, DMP, and MPD inverse. A tensor $M$-product-based regularization technique is applied to solve the color image deblurring.
\end{abstract}

\begin{keyword}
Generalized inverse. Drazin inverse. Core-EP inverse. Core inverse.  $M$-product.
\end{keyword}
\end{frontmatter}

\section{Introduction} \label{sec1}

Tensor generalized inverses have significantly impacted numerical multilinear algebra in handling huge volumes of multidimensional data \cite{Qi07, qi2017, Yang23}.
 Inverse and generalized inverses of tensors are closely related to multilinear systems and have been used in modeling many engineering and scientific phenomena \cite{kolda,Liang}.
Computation of tensor generalized inverses within a more general tensor product framework and computationally efficient algorithms becomes particularly challenging.
Recently, there has been a growing interest in developing multidimensional data analysis theory and computations of tensors (see, e.g., results related to the $t$-product \cite{kilmer11,kilmer13}, cosine transform product ($c$-product) \cite{Kernfeldlinear}, $M$-product \cite{Kernfeldlinear}, Einstein product \cite{ein}, Shao's general product \cite{Shao13} and the $n$-mode product \cite{qi2017}, etc.)
These tensor products have also been the subject of intense mathematical research and applications in diverse fields, which has attracted considerable attention in recent literature, e.g., computer vision \cite{hao2013,hu16}, image processing \cite{martin13,soltani16,tarzanagh2018}, signal processing \cite{chan16,liu18}, control systems \cite{Wangwei22}, face recognition \cite{kilmer13}, data completion and denoising \cite{hu16,Wang19}, low-rank tensor recovery \cite{Wang19}, and robust tensor  Principal Component Analysis (PCA) \cite{liu18}.

In view of the above theory and computations of the tensor product, Brazell {\it et al.} \cite{Brazell} discussed the ordinary tensor inverse and solution of multilinear systems in the outline of the Einstein product.
Subsequently, many authors worked on tensor generalized inverses via the Einstein product (e.g., the outer and $(b,c)$ inverses \cite{Stanimirovic}, Moore-Penrose (MP) inverse \cite{SunEinstein,Behera}, core and core-EP inverses \cite{Sahoo}, and the Drazin inverse \cite{Jidrazin,AsishRJ}).
In addition, many researchers have studied composite generalized inverses via the Einstein product (e.g., DMP and CMP inverse \cite{WangCMP20}, MPD inverse \cite{Ma1filomat9}, and bilateral inverses \cite{behera2023computing,PublishedSalemi23}).

In addition to previous research, tensor calculus based on t-product, introduced by Kilmer \&  Martin (2011), has significantly impacted in computational and applied mathematics \cite{kilmer11,kilmer13}.
Following this work, the MP inverse \cite{Jin17}, outer inverse \cite{behera2023computation}, Drazin inverse \cite{AsishRJ}
and singular value decomposition \cite{hao2013, Miaogeneralized,Wangneural}, of tensors have been developed.
In 2015, the authors of \cite{Kernfeldlinear} discussed tensor–tensor products with invertible linear transforms based on the $M$-product and $c$-product of tensors.
In \cite{kilm21}, the $M$-product tensor based generalized inverses compose a relatively new research field that recently appeared in the literature. The main results referring to the MP inverse of tensors via the $M$-product are available in \cite{jin2023}. To the best of our knowledge, only the MP inverse of tensors has been discussed via $M$-product in the available literature. Our research aims to design the core-EP and Drazin inverse of tensors in the framework of $M$-product.

Recently, there has been growing interest in developing rapid and efficient tensor-based numerical methods for solving multilinear systems. However, solving multilinear systems depends on the different structures of tensor computations. Following the Einstein product, the authors of \cite{Brazell,Wang19,AsishRJ} discussed iterative methods for solving several tensor equations \cite{Wang19}. At the same time, following the $t$-product, the authors of \cite{Wang2023} proposed a randomized Kaczmarz-like algorithm and its relaxed version, and solved the linear complementarity problem using a tensor-based fixed point iterative method. Further, several iterative methods have been discussed to calculate approximate solutions to ill-posed systems, i.e., preconditioned LSQR (see \cite{kilmer11}), conjugate gradient (CG), $t$-singular value decomposition ($t$-SVD), and Golub–Kahan iterative bidiagonalizatios (G-K-Bi-diag) \cite{kilmer13,GuideKy22}. The $M$-product is a class of tensor products that generalizes the notions of the $t$-product and the $c$-product.

A brief summary of the main contributions of our study is listed below.

\begin{itemize}
\item The Drazin and core-EP inverses of the tensors via the $M$-product are introduced, and effective algorithms for computing these inverses are presented.
\item The main properties, characterizations, and representations of the Drazin and core-EP inverse are established.
\item The definitions of composite generalized inverses are presented in the framework of $M$-product, including CMP, DMP, and MPD inverses of tensors.
\item Higher order Gauss-Seidel and Gauss-Jacobi iterative methods have been developed to solve multilinear equations.
\item Solutions for particular multilinear systems in terms of the Drazin inverse, core-EP inverse, and composite generalized inverses are considered.
\item The application of the proposed $M$-product-based regularization technique to color image deblurring is considered.
\end{itemize}

The overall plan for the current research is subject to the next organization. Section \ref{sec:pre} is presents the notations and definitions of tensors in the environment of the $M$-product, which will support to introduce the main results. Section \ref{SecDrMt} presents the Drazin inverse of tensors and discusses a few characterizations of these inverses using  $M$-product. On the other hand, in Section 4, we introduce the tensor core-EP inverse and design an algorithm for its computations. Composite generalized inverses are discussed in section 5. Iterative solutions to multilinear systems, Tikhonov’s regularization, and color image deblurring problems are discussed in Section 6. Finally, the conclusions and possible future research directions are mentioned in Section 7.

\section{Basic terms and preliminaries}\label{sec:pre}

The $i$th frontal slice of  tensor $\mc{A}\in\mbc^{m \times n \times p}$ is denoted by $\mc{A}^{(i)}=\mc{A}(:,:,i)$.
The tube fibers of $\mc{A}$ are labeled with either $\mc{A}(i, j,:)$ or $ \mc{A}(i,:,k)$ or $ \mc{A}(:,j,k)$.
Elements of $\mc{A}$ are denoted either by $(\mc{A})_{ijk}$ or $a_{ijk}$, such that $i = \overline{1,m}$, $j = \overline{1,n}$; and $k = \overline{1,p}$, 
 { assuming that the notation $l=\overline{1,s}$ stands for for $l = 1,2, \ldots, s$.}

\begin{definition}{\rm \cite{kolda}}
   Let $\mc{A}\in {\mbc}^{m\times n \times q}$ be a tensor and $B\in{\mbc^{p\times q}}$ be a matrix.
   The $3$-mode product of $\mc{A}$ with $B$ is denoted by $\mc{A}\times_{3}B\in{\mbc}^{m\times n\times p}$ and element-wise defined by
      \begin{equation*}
          (\mc{A}\times_{3} B)_{ijk}=\sum_{s=1}^{q}a_{ijs}b_{ks},\quad i=\overline{1,m}, ~j=\overline{1,n},~k=\overline{1,p}.
      \end{equation*}
\end{definition}

In particular, if $\mc{A}\in \mbc^{m\times n\times p}$ and $M\in \mbc^{p\times p}$ is invertible, the ``tilde'' notation denotes a tensor in the transform domain predetermined by $M$, that is,
\begin{equation*}
    \tilde{\mc{A}}:=\mc{A}\times_{3}M \in \mbc^{m\times n \times p}.
\end{equation*}
The ``tilde'' notation should be recognized in the context relative to the applied $M$.
From here onwards, we will consider $M$ to be nonsingular. Next, we define the $\mat$ operation to transform a tensor into a matrix with respect to $M$.

\begin{definition}[Definition 2.6 and Lemma 3.1, \cite{Kernfeldlinear}]\label{Kernfeldlinearrati}
Let $\mc{A}\in \mbc^{m\times n\times p}$,  $M\in \mbc^{p\times p}$ and $(\tilde{\mc{A}})^{(i)}\in \mbc^{m\times n}$ be the $i$th frontal slice of $\tilde{\mc{A}}$, for $i=\overline{1,p}$.
Then $\mathrm{\mat}:\mbc^{m\times n\times p} \mapsto \mbc^{mp\times np}$ is defined as
\[\mat(\mc{A})=\begin{bmatrix}
(\tilde{\mc{A}})^{(1)} &0 &\cdots &0\\
0 & (\tilde{\mc{A}})^{(2)}  & \cdots &0\\
\vdots  &   \vdots  &    \ddots & \vdots\\
0& 0&  \cdots & (\tilde{\mc{A}})^{(p)}
\end{bmatrix}.
\]
\end{definition}

The inverse operation is defined by the following algorithm.
\begin{algorithm}[H]
 \caption{$\mat^{-1}(A)$}\label{alg:matinvINV}
\begin{algorithmic}[1]
\State {\bf Input} ${A} \in \mbc^{mp \times np}$ and $M\in\mbc^{p\times p}$
\For{$i \gets 1$ to $p$}
   \State $\mc{B}(:,:,i) = A((i-1)m+1:im,(i-1)n+1:in)$
   \EndFor
\State Compute $\mc{A}=\mc{B}\times_3M^{-1}$
\State \Return $\mc{A}=\mat^{-1}(A)$
 \end{algorithmic}
\end{algorithm}

{ 
In view of  Algorithm \ref{alg:matinvINV} and Definition \ref{Kernfeldlinearrati}, every tensor $\mc{A}\in\mbc^{m\times n\times p}$ is represented as
\[\mc{A}=\mat^{-1}(\mat(\mc{A})).\]}

Note that the $t$-product in \cite{kilmer13} is obtained by choosing the matrix $M$ as the unnormalized DFT matrix.
The cosine transform product, called the $c-$product, \cite{kilm21} is obtained by taking $M=W^{-1}C(I+Z)$, where $C$ is the DCT matrix of order $n$, $Z=\mathrm{diag}(ones(n-1,1),1)$ and $W=\mathrm{diag}(C(:, 1))$.

Now, the tensor product is defined with the support of the operators `$\mat$' and `$\mat^{-1}$', as in Definition \ref{DeftProd}.

 \begin{definition}\label{DeftProd}{\rm \cite{Kernfeldlinear}}
 For a given  $M\in \mbc^{p\times p}$, the $M$-product of tensors $\mc{A}\in \mbc^{m\times n\times p}$ and $\mc{B}\in \mbc^{n\times k\times p}$ is denoted by $\mc{A}*_{M}\mc{B}\in \mbc^{m\times k\times p}$ and defined by
\[\mc{A}*_{M}\mc{B}=\mat^{-1}(\mat(\mc{A})\mat(\mc{B})).\]
\end{definition}

\begin{example}\rm
Let $\mc{A}, \mc{B}\in \mbc^{3\times 3\times 3}$ be defined by the entries
\[\mc{A}(:,:,1)=\begin{bmatrix}
 1   &  0 &    1\\
    -1  &   2  &   3\\
     0  &   1  &   0
\end{bmatrix},~\mc{A}(:,:,2)=\begin{bmatrix}
  2   &  3 &   -1\\
    -1  &   0  &   3\\
     1   &  1   &  1
\end{bmatrix},~\mc{A}(:,:,3)=\begin{bmatrix}
   0    & 0   &  1\\
     3   &  0  &   1\\
    -1  &   0   & -1
\end{bmatrix},\]
\[\mc{B}(:,:,1)=\begin{bmatrix}
2   & -1  &   0\\
	     1   &  2  &   1\\
	    -1 &    1   &  0
\end{bmatrix},~\mc{B}(:,:,2)=\begin{bmatrix}
 1  &   -1   &   1\\
	     3   &   0   &   1\\
	     0    &  1  &    0
\end{bmatrix},~\mc{B}(:,:,3)=\begin{bmatrix}
   2   &   1    & -1\\
	     1     & 0   &   0\\
	     1   &   0     & 2
\end{bmatrix},\]
and $M=\begin{bmatrix}
2 & 2 & 3 \\2 & 3 & 1 \\1 & 1 & 1
\end{bmatrix}$.
The product $\mc{C}=\mc{A}*_{M}\mc{B}$ is computed as
\[\mc{C}(:,:,1)=\begin{bmatrix}
 -252 &  -29 &  -77\\
	  -147 & -127 & -124\\
	   -68  & -20  & -27
\end{bmatrix},~\mc{C}(:,:,2)=\begin{bmatrix}
 201 &   14  &  65\\
	    73  & 107  &  77\\
	    67   & 19   & 28
\end{bmatrix},~\mc{C}(:,:,3)=\begin{bmatrix}
 81   & 20  &  20\\
	    89 &   37 &   65\\
	    11   &  5   &  3
\end{bmatrix}.\]
\end{example}

\begin{definition}{\rm \cite{Kernfeldlinear}}
If $M\in\mbc^{p\times p}$ is fixed, then $\mc{A}\in \mbc^{m\times m\times p}$ is said to be the identity tensor if $(\mc{A}\times_{3}M)^{({i})}=I_m$ for $i=\overline{1,p}$, where $I_m$ is the $m\times m$ identity matrix.
Similarly, $\mc{O}\in \mbc^{m\times n\times p}$ is a zero or null tensor if $(\mc{A}\times_{3}M)^{({i})}$ are zero matrices for each $i=\overline{1,p}$.
\end{definition}

\begin{definition}
 The tensor $\mc{A} \in \mbc^{m\times n\times p}$ is called diagonally dominant with respect to $M\in\mbc^{p\times p}$ if all frontal slices of $\tilde{\mc{A}}=\mc{A}\times_{3}M$ are diagonally dominant.
\end{definition}

\begin{definition}{\rm \cite{Kernfeldlinear}}
The transpose conjugate of $\mc{A}\in \mbc^{m\times n\times p}$ based on $M\in\mbc^{p\times p}$ is denoted by $\mc{A}^{H}$ and is defined by the relation  $(\mc{A}^{H}\times_{3}M)^{(i)}= (\tilde{\mc{A}}^{(i)})^{H}$, $i=\overline{1,p}$, { where $\tilde{\mc{A}}=\mc{A}\times_{3}M$}.
\end{definition}

\begin{definition}{\rm \cite{Kernfeldlinear}}
{ The tensor} $\mc{A}\in \mbc^{m\times m\times p}$ is called invertible in the tensor product determined by $M\in\mbc^{p\times p}$ if there exists a tensor $\mc{X}\in\mbc^{m\times m\times p}$
such that both $\mc{A}\m \mc{X}$ and $\mc{X}\m \mc{A}$ are identity tensors. If $\mc{A}$ is not invertible, it is termed as a singular tensor.
\end{definition}

It is easy to see that both $\mat$ and $\mat^{-1}$  are linear and isomorphic maps in their domains.
Additional properties on the $\mat$ and $\mat^{-1}$ operations can be found \cite{behera2024}.

\begin{definition}
The range and null space of $\mc{A} \in \mbc^{m\times n\times p}$ relative to $M\in\mbc^{p\times p}$ are defined as
$$
\aligned
\rg(\mc{A})&=\{\mc{A}\m\mc{Z}~:~{ \mathcal{Z}}\in \mbc^{n\times 1\times p}\}\subseteq \mbc^{m\times 1\times p},\\
 \nl(\mc{A})&=\{\mc{Y}~:~\mc{A}\m\mc{Y}=\mc{O}\in \mbc^{m\times 1\times p} \}\subseteq \mbc^{n\times 1\times p}.
 \endaligned
 $$
\end{definition}

The following proposition will be useful in proving the results related to the range and null space.

\begin{proposition}\label{proprn9}
 Let $M\in\mbc^{p\times p}$ and $\mc{A},~\mc{B} \in \mbc^{m\times n\times p}$.
 Then $\rg(\mc{A})\subseteq\rg(\mc{B})\Longleftrightarrow\mc{A}=\mc{B}\m\mc{T}$ for some $\mc{T}\in \mbc^{n\times n\times p}$.
\end{proposition}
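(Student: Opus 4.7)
The plan is to pass to the transform domain determined by $M$ and reduce the tensor statement to the classical matrix analogue applied frontal-slice-by-frontal-slice. The key structural observation I will rely on is that, by Definitions \ref{Kernfeldlinearrati} and \ref{DeftProd}, one has $\mat(\mc{A}\m\mc{Z})=\mat(\mc{A})\mat(\mc{Z})$, and $\mat(\mc{A})$ is block diagonal with blocks $\tilde{\mc{A}}^{(i)}$. Consequently, for $\mc{Z}\in\mbc^{n\times 1\times p}$, the frontal slices of $\mc{A}\m\mc{Z}$ satisfy $\widetilde{\mc{A}\m\mc{Z}}^{(i)}=\tilde{\mc{A}}^{(i)}\tilde{\mc{Z}}^{(i)}$, and since $M$ is invertible the map $\mc{Z}\mapsto(\tilde{\mc{Z}}^{(1)},\ldots,\tilde{\mc{Z}}^{(p)})$ is a bijection onto $\mbc^{n}\times\cdots\times\mbc^{n}$. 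Hence
\[
\rg(\mc{A})=\bigl\{\mc{Y}\in\mbc^{m\times 1\times p}\ :\ \tilde{\mc{Y}}^{(i)}\in\mathrm{range}(\tilde{\mc{A}}^{(i)}),\ i=\overline{1,p}\bigr\},
\]
which I would record as the main working identity.

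For the direction ($\Leftarrow$), assume $\mc{A}=\mc{B}\m\mc{T}$. Then by associativity of the $M$-product, for any $\mc{Z}\in\mbc^{n\times 1\times p}$ we have $\mc{A}\m\mc{Z}=\mc{B}\m(\mc{T}\m\mc{Z})\in\rg(\mc{B})$, so $\rg(\mc{A})\subseteq\rg(\mc{B})$.

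For the direction ($\Rightarrow$), assume $\rg(\mc{A})\subseteq\rg(\mc{B})$. Using the characterisation above together with the ability to prescribe each $\tilde{\mc{Y}}^{(i)}$ independently, I deduce that $\mathrm{range}(\tilde{\mc{A}}^{(i)})\subseteq\mathrm{range}(\tilde{\mc{B}}^{(i)})$ for every $i=\overline{1,p}$. The classical matrix version of the lemma then supplies matrices $T_i\in\mbc^{n\times n}$ with $\tilde{\mc{A}}^{(i)}=\tilde{\mc{B}}^{(i)}T_i$ for each $i$. I would now construct $\mc{T}$ by declaring its transform-domain frontal slices to be $\tilde{\mc{T}}^{(i)}:=T_i$, and setting $\mc{T}:=\tilde{\mc{T}}\times_{3}M^{-1}\in\mbc^{n\times n\times p}$. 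Applying $\mat$ turns the slice-wise equalities into the single block-diagonal identity $\mat(\mc{A})=\mat(\mc{B})\mat(\mc{T})$, and then $\mat^{-1}$ yields $\mc{A}=\mc{B}\m\mc{T}$.

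The main technical point to be careful with is the slice-wise extraction step in the ($\Rightarrow$) direction: I have to be sure that the inclusion $\rg(\mc{A})\subseteq\rg(\mc{B})$ really forces slice-wise range inclusion rather than merely a joint inclusion across slices. This is exactly where the invertibility of $M$ is used, since invertibility lets me realise any prescribed tuple $(\tilde{\mc{Z}}^{(1)},\ldots,\tilde{\mc{Z}}^{(p)})$ by a genuine $\mc{Z}\in\mbc^{n\times 1\times p}$ via $\mc{Z}=\tilde{\mc{Z}}\times_{3}M^{-1}$. Apart from that, everything else reduces to the well-known matrix statement and bookkeeping through $\mat$ and $\mat^{-1}$.
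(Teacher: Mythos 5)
Your argument is correct, but it follows a different route from the paper's own proof in the substantive direction. The ($\Leftarrow$) implication is the same in both (associativity of $\m$ lets you absorb $\mc{T}\m\mc{Z}$ into a single column tensor). For ($\Rightarrow$), however, the paper stays entirely in the tensor domain: it notes that $\mc{A}\m\mc{Z}\in\rg(\mc{B})$ for every column tensor $\mc{Z}$, then takes $\mc{Z}$ to run over the frontal-slice ``columns'' of the identity tensor and assembles the resulting column tensors $\mc{T}_j\in\mbc^{n\times 1\times p}$ into the desired $\mc{T}\in\mbc^{n\times n\times p}$ --- the tensor analogue of the classical column-by-column construction showing $A=BT$ when each column of $A$ lies in the column space of $B$. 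You instead pass to the transform domain, use the block-diagonal structure of $\mat$ to prove the slice-wise characterisation $\rg(\mc{A})=\{\mc{Y}:\tilde{\mc{Y}}^{(i)}\in\mathrm{range}(\tilde{\mc{A}}^{(i)}),\ i=\overline{1,p}\}$, deduce slice-wise matrix range inclusions (correctly observing that invertibility of $M$ lets you prescribe the slices of $\mc{Z}$ independently), apply the classical matrix lemma to get $T_i$ with $\tilde{\mc{A}}^{(i)}=\tilde{\mc{B}}^{(i)}T_i$, and lift back via $\times_3 M^{-1}$ and $\mat^{-1}$. What your route buys is explicitness: it isolates exactly where invertibility of $M$ enters, supplies the slice-wise description of $\rg$ (useful elsewhere), and fills in the assembly step that the paper compresses into one sentence; what the paper's route buys is brevity and independence from the transform-domain machinery, working with a single tensor identity $\mc{A}=\mc{B}\m\mc{T}$ built directly from the identity tensor's columns. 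Both are valid proofs of the proposition.
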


\begin{proof}
Let us assume $\mc{A}=\mc{B}\m\mc{T}$ and choose $\mc{Y}\in\rg(\mc{A})$.
  Then, $\mc{Y}=\mc{A}\m\mc{Z}$ for some $\mc{Z}\in \mbc^{n\times 1\times p}$. Now
    \[\mc{Y}=\mc{A}\m\mc{Z}=\mc{B}\m\mc{T}\m\mc{Z}=\mc{B}\m\mc{T}_1,\ \ \mc{T}_1=\mc{T}\m\mc{Z}\in \mbc^{n\times 1\times p}.\]
    Thus $\mc{Y}\in \rg(\mc{B})$.
Conversely, if $\rg(\mc{A})\subseteq\rg(\mc{B})$ then for all ${ \mc{Z}}\in \mbc^{n\times 1\times p}$ it follows that
    \begin{equation*}
\mc{A}\m\mc{Z}\in\rg(\mc{A})\subseteq\rg(\mc{B})=\mc{B}\m\mc{T} \text{~~for some~~} \mc{T}\in\mbc^{n\times 1\times p}
   \end{equation*}
Taking the frontal slices of $\mc{Z}$ as the frontal slices of the identity tensor, the result can be concluded.
\end{proof}

Next, we discuss the tubal spectral norm (or tubal norm) and the multirank \cite{Kernfeldlinear, kilm21} of a tensor.

\begin{definition}
Let  $\mc{A} \in \mbc^{m\times n\times p}$, $M\in\mbc^{p\times p}$ { and $\tilde{\mc{A}}=\mc{A}\times_{3}M$}. Then
\begin{enumerate}
\item[\rm (i)] the tubal norm of $\mc{A}$ is defined as
$\|\mc{A}\|_M=\displaystyle{\max_{1\leq i\leq p}(\|\tilde{\mc{A}}(:,:,i)\|_2)}, \text{~where~} \|\cdot\|_2$ is the spectral matrix norm.
\item[\rm (ii)] the multirank of $\mc{A}$ is denoted by $r_M(\mc{A})$ and defined as $r_M(\mc{A})=(r_1,\ldots,r_p)$ where $r_i=\ra{\tilde{\mc{A}}(:,:,i)}$, $i=\overline{1,p}.$
 \end{enumerate}
\end{definition}

The multi index and tubal index of a tensor, are defined below.

\begin{definition}
The multi index of $\mc{A} \in \mbc^{m\times m\times p}$ induced by  $M\in\mbc^{p\times p}$ is denote by $\Ind_M(\mc{A})$ and defined as
 $\Ind_M(\mc{A})=(k_1,\ldots,k_p)$ where $k_i=\ind(\tilde{\mc{A}}(:,:,i))$, $i=\overline{1,p}$ { and $\tilde{\mc{A}}=\mc{A}\times_{3}M$}.
 The standard notation $\ind(A)$ represents the minimal non-negative integer $k$ determining the rank-invariant power $\mathrm{rank}(A^{k})=\mathrm{rank}(A^{k+1})$. The tubal index $k$ is defined as $k:=\max_{1\leq i\leq p}k_i$.
 \end{definition}

\begin{remark}
   The tubal rank (or $t$-rank in {\rm \cite{kilmer13}}) of $\mc{A} \in \mbc^{m\times n\times p}$ is defined as $\displaystyle\max_{1\leq i\leq p}r_i$, where $r_i=\mathrm{rank}(\mc{A}(:,:,i)$).
\end{remark}

\begin{definition} \label{tenseries}
Let $\mc{A}\in\mathbb{C}^{m \times m \times p}$,  $M\in\mathbb{C}^{p \times p}$ { and $\tilde{\mc{A}}=\mc{A}\times_{3}M$}.
\begin{enumerate}
    \item[\rm (i)] The series $\displaystyle\sum_{s=0}^\infty \mc{A}^{s}$ is convergent, if  $\displaystyle\sum_{s=0}^\infty [\tilde{\mc{A}}(:,:i)]^s$ is convergent for each $i$, where $i=\overline{1,p}$.
    \item[\rm (ii)] $\displaystyle{\lim_{s \to\infty} \mc{A}^s}=\mc{L}$ if  $\displaystyle{\lim_{s \to\infty}({\tilde{\mc{A}}}^{s})_{ijl}}=(\mc{L})_{ijl}$ for $1\leq i,j\leq m$ and $l=\overline{1,p}$.
\end{enumerate}
\end{definition}

\begin{definition}
    Let $\mc{A}\in \mbc^{m\times m \times p}$ and $M \in \mbc^{p\times p}$. If  $\mc{X} \neq \mc{O} \in \mbc^{m\times 1 \times n}$  satisfy
  \[\mc{A}\m \mc{X} =\lambda\mc{X},~~\lambda \in \mbc.\]
Such $\lambda$ is termed as an $M$-eigenvalue of $\mc{A}$ and $\mc{X}$ is the $M$-eigenvector of $\mc{A}$ based on $M$ and $\lambda$.
  Further, the spectral radius of $\mc{A}$ is denoted as $\rho(\mc{A})$ and is defined as
${\displaystyle \rho (\mc{A})=\max_{1\leq i\leq p}\{\rho (\tilde{\mc{A}}(:,:,i})\}$.
\end{definition}

The MP inverse under the $M$-product is introduced in \cite{jin2023}.
The tensor $\mc{X}\in \mbc^{n\times m\times p}$ is the MP inverse of $\mc{A}\in \mbc^{m\times n\times p}$ with an underlying $M\in\mbc^{p\times p}$ if it satisfies
\begin{equation*}
\mc{A}\m\mc{X}\m\mc{A}=\mc{A}, ~\mc{X}\m\mc{A}\m\mc{X}=\mc{X},~\mc{A}\m\mc{X}=(\mc{A}\m\mc{X})^H \text{~and~} \mc{X}\m\mc{A}=(\mc{X}\m\mc{A})^H.
\end{equation*}
The MP inverse  of $\mc{A}$ is denoted by $\mc{A}^{\dagger}$ and a few properties of the MP can be found in \cite{jin2023}.

\section{Drazin inverse of tensors under $M$-product}\label{SecDrMt}

In this section, we recall the definition and discuss the main properties of the Drazin inverse in the tensor structure domain within the framework of the $M$-product.
In addition, we also show the computational efficiency of a test example.

\begin{definition}\label{drdef}
Let $M\in\mbc^{p\times p}$ and $\mc{A} \in \mbc^{m\times m\times p}$ be of the tubal index $k$.
The Drazin inverse $\mc{A}^\D$ of $\mc{A}$ is defined as a unique tensor $\mc{X}\in \mbc^{m\times m\times p}$ satisfying $\mc{X}\m\mc{A}\m\mc{X}=\mc{X}$, $\mc{A}\m\mc{X}=\mc{X}\m\mc{A}$ and $\mc{X}\m\mc{A}^{k+1}=\mc{A}^k$.
\end{definition}

We can also compute the Drazin inverse using  $\mat$ and $\mat^{-1}$, as stated below.

\begin{proposition}\label{propdr}{\rm \cite{behera2024}}
Let $M\in\mbc^{p\times p}$ and $\mc{A} \in \mbc^{m\times m\times p}$ with  $\ind(\mat(\mc{A})) = k$. Then,
\[\mc{A}^{\D} = \mat^{-1}(\mat(\mc{A})^{\D}).\]
\end{proposition}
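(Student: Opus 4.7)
The plan is to verify that the candidate $\mc{X}:=\mat^{-1}(\mat(\mc{A})^{\D})$ satisfies the three defining identities in Definition \ref{drdef}, and then appeal to the uniqueness of the Drazin inverse to conclude $\mc{X}=\mc{A}^{\D}$. The whole argument is driven by the fact that $\mat$ is a linear isomorphism from $\mbc^{m\times m\times p}$ onto the space of block-diagonal matrices in $\mbc^{mp\times mp}$, and by the defining identity $\mat(\mc{A}\m\mc{B})=\mat(\mc{A})\mat(\mc{B})$ coming straight from Definition \ref{DeftProd}.

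First I would record the block structure: $\mat(\mc{A})=\blkd(\tilde{\mc{A}}^{(1)},\ldots,\tilde{\mc{A}}^{(p)})$, so its matrix Drazin inverse exists and equals the block diagonal of the slice Drazin inverses, and $\ind(\mat(\mc{A}))=\max_{i}\ind(\tilde{\mc{A}}^{(i)})$. This matches exactly the tubal index $k$ from Definition \ref{drdef}, which reconciles the hypothesis with the notion of tubal index used on the tensor side. Next I would observe, by a one-line induction using $\mat(\mc{A}\m\mc{B})=\mat(\mc{A})\mat(\mc{B})$, that $\mat(\mc{A}^{s})=\mat(\mc{A})^{s}$ for every non-negative integer $s$.

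With these identifications in hand, writing $A:=\mat(\mc{A})$ and $X:=\mat(\mc{A})^{\D}=A^{\D}$, the matrix-level Drazin relations give $XAX=X$, $AX=XA$, and $XA^{k+1}=A^{k}$. Applying $\mat^{-1}$ to each equation and using both the homomorphism property and the power identity above, I obtain
\begin{equation*}
\mc{X}\m\mc{A}\m\mc{X}=\mc{X},\qquad \mc{A}\m\mc{X}=\mc{X}\m\mc{A},\qquad \mc{X}\m\mc{A}^{k+1}=\mc{A}^{k},
\end{equation*}
where $\mc{X}=\mat^{-1}(A^{\D})$. Hence $\mc{X}$ satisfies all the defining properties in Definition \ref{drdef}. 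Uniqueness of the tensor Drazin inverse (which itself transfers from the matrix case via $\mat$) then forces $\mc{X}=\mc{A}^{\D}$, completing the argument.

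The only genuinely delicate point is the bookkeeping between the two notions of index and the verification that $\mat$ intertwines products and powers; once the homomorphism $\mat(\mc{A}\m\mc{B})=\mat(\mc{A})\mat(\mc{B})$ is in place, the proof is a direct transport of the matrix identities across an isomorphism, with no real computation required.
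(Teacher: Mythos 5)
Your proposal is correct: transporting the three defining Drazin identities through the isomorphism $\mat$/$\mat^{-1}$ (using $\mat(\mc{A}\m\mc{B})=\mat(\mc{A})\mat(\mc{B})$, the block-diagonal structure, and the matching of $\ind(\mat(\mc{A}))$ with the tubal index) and invoking uniqueness is exactly the argument the paper uses for the analogous core-EP statement in Proposition \ref{propep}, and is the same route as the cited source for Proposition \ref{propdr} itself, which the paper does not reprove. No gaps; your explicit remark that $\mat(\mc{A})^{\D}$ is again block diagonal (so that $\mat(\mat^{-1}(\mat(\mc{A})^{\D}))=\mat(\mc{A})^{\D}$) is a detail the paper leaves implicit.
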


\begin{remark}\label{rm1.2}
In Definition \ref{drdef}, it is possible to compute the Drazin inverse using a multi index, where each frontal slice of the Drazin inverse is computed using the respective index. Further, in Proposition \ref{propdr}, the computation is slower because of the matrix-structured domain.
\end{remark}

In view of Definition \ref{drdef}, we design Algorithm \ref{alg:mdrazin} for calculating the Drazin inverse in the $M$-product framework.

\begin{algorithm}[H]
 \caption{Computing the Drazin inverse under $M$-product} \label{alg:mdrazin}
\begin{algorithmic}[1]
  \Procedure{Drazin inverse} {$\mc{A}^\D$}
\State {\bf Input} $\mc{A} \in \mbc^{m \times m\times p}$ and  $M\in\mbc^{p\times p}$.
\State Compute $\tilde{\mc{A}}=\mc{A}\times_3 M$
\For{$i \gets 1$ to $p$}
\State $k_i=\ind(\tilde{\mc{A}}(:,:,i))$
   \EndFor
   \State Compute $k=\max_{1\leq i\leq p}k_i$
\For{$i \gets 1$ to $p$}
   \State $\mc{Z}(:,:,i) = (\tilde{\mc{A}}(:,:,i))^\D$ 
   \EndFor
\State Compute $\mc{X}=\mc{Z}\times_3M^{-1}$
\State \Return $\mc{A}^\D=\mc{X}$
\EndProcedure
 \end{algorithmic}
\end{algorithm}

Applying Algorithm \ref{alg:mdrazin}, we compute the Drazin inverse for the following  example.

\begin{example}\rm\label{exa3.4}
  Let $\mc{A}\in\mbc^{3\times 3\times 3}$ with entries
  \[\mc{A}(:,:,1)=\begin{bmatrix}
  4   & -4  &  -1\\
	    -7  &  -8  &   7\\
	    -1   & -2   &  0
  \end{bmatrix},~\mc{A}(:,:,2)=\begin{bmatrix}
  -2   &  2   &  1\\
	     4   &  4   & -4\\
	     0  &   1  &   0
  \end{bmatrix},~\mc{A}(:,:,3)=\begin{bmatrix}
   -1    & 2  &   0\\
	     3  &   4   & -2\\
	     1    & 1  &   0
  \end{bmatrix},~M=\begin{bmatrix}
  2& 2 &3\\
  2 &3 &1\\
  1& 1& 1
\end{bmatrix}.\]
We evaluate the tubal index $k=2=\max\{1,1,2\}$ since $\ind(\tilde{\mc{A}}(:,:,1))=1=\ind(\tilde{\mc{A}}(:,:,2))$, $\ind(\tilde{\mc{A}}(:,:,3))=2$.
Using Algorithm \ref{alg:mdrazin}, we calculate $\mc{X}=\mc{A}^\D$, where
\[\mc{X}(:,:,1)=\begin{bmatrix}
  -11  &  -2 & 1\\
	   -2&  1  &  1\\
	   -4.5& -1.5 &  -1
\end{bmatrix},~\mc{X}(:,:,2)=\begin{bmatrix}
 -6  & 1 &  -1\\
	  0.5 &  -0.5&  -1\\
	  2.75 & -0.75 & 1
\end{bmatrix},~\mc{X}(:,:,3)=\begin{bmatrix}
 -4&  1&    0\\
	         1.5    &     -0.5  &0\\
	    1.75 & -0.75 &  0
\end{bmatrix}.\]
\end{example}

Table \ref{tab:error} surveys the errors associated with tensor equations underlying the definitions of the generalized inverses of $\mc{A}$.
\begin{table}[H]
\begin{center}
\caption{Errors associated with different tensor equations}\label{tab:error}
 \vspace*{0.2cm}
    \small{
    \renewcommand{\arraystretch}{1.2}
    \begin{tabular}{|c|c|c|}
    \hline
     $\mc{E}^{M}_{1} = \|\mc{A}-\mc{A}\m \mc{X}\m \mc{A}\|_M$  & $\mc{E}^M_{2} = \|\mc{X}-\mc{X}\m \mc{A}\m \mc{X}\|_M$ & $\mc{E}^M_{3} =\|\mc{A}\m \mc{X}-(\mc{A}\m \mc{X})^T\|_M$ \\
     \hline
   $\mc{E}^{M}_{5} = \|\mc{A}\m \mc{X}-\mc{X}\m \mc{A}\|_M$
       &$\mc{E}^{T}_{1^k} = \|\mc{X}\m \mc{A}^{k+1} -\mc{A}^k\|_M$&$\mc{E}^{M}_{7} = \|\mc{A}\m \mc{X}^2-\mc{X}\|_M$ \\
     \hline
    \end{tabular}}
       \end{center}
\end{table}

The norms arranged in the Table \ref{tab:error} can be used under other products generated by changing the matrix $M$.
For example $\mc{E}_1^t$ means the error calculated by taking $M$ as the DFT matrix.
Next, we discuss the comparative analysis of the mean CPU time (MT$^M$) and errors (Error$^M$) obtained during the calculation of the Drazin inverse for different choices of $M$.
The tensor $\mc{A}$ { in Table} \ref{tab:comp-draz-m} is chosen randomly so that it has an index of $1$ or $2$.

\begin{table}[H]
    \begin{center}
       \caption{Computational time for computing $\mc{A}^D$ for different tensor products}
        \vspace{0.2cm}
        \renewcommand{\arraystretch}{1.1}
        \small
        \begin{tabular}{cccccccc}
    \hline
        Size of $\mc{A}$& $k$  & MT$^{t}$  & MT$^{c}$ & MT$^M$ & Error$^{t}$ & Error$^{c}$ &Error$^M$\\
           \hline
    \multirow{4}{*}{$300\times 300\times 300$}& \multirow{4}{*}{1} &  \multirow{4}{*}{34.18} &\multirow{4}{*}{14.14}&\multirow{4}{*}{11.02}&
    $\mc{E}^{t}_{1}= 1.09e^{-07}$  & $\mc{E}^{c}_{1}=1.23e^{-07}$ & $\mc{E}^{M}_{1}=3.91e^{-07}$
    \\
& & & & & $\mc{E}^{t}_{2}= 3.90e^{-08}$  & $\mc{E}^{c}_{2}=3.45e^{-08}$ & $\mc{E}^{M}_{2}=5.75e^{-07}$
    \\
          &  & & & & $\mc{E}^{t}_{5}= 1.36e^{-09}$  & $\mc{E}^{c}_{5}=1.54e^{-09}$ & $\mc{E}^{M}_{5}=4.00e^{-09}$
       \\
           \hline
    \multirow{4}{*}{$400\times 400\times 400$}& \multirow{4}{*}{1} &  \multirow{4}{*}{50.80} &\multirow{4}{*}{29.46}&\multirow{4}{*}{28.18}&
     $\mc{E}^{t}_{1}= 3.87e^{-08}$  & $\mc{E}^{c}_{1}=1.874e^{-07}$ & $\mc{E}^{M}_{1}=2.11e^{-07}$
    \\
& & & & & $\mc{E}^{t}_{2}= 3.47e^{-08}$  & $\mc{E}^{c}_{2}=1.76e^{-08}$ & $\mc{E}^{M}_{2}=3.89e^{-07}$
    \\
          &  & & & & $\mc{E}^{t}_{5}= 1.02e^{-10}$  & $\mc{E}^{c}_{5}=3.43e^{-10}$ & $\mc{E}^{M}_{5}=1.07e^{-08}$
       \\
           \hline
    \multirow{4}{*}{$300\times 300\times 300$}& \multirow{4}{*}{2} &  \multirow{4}{*}{35.09} &\multirow{4}{*}{16.26}&\multirow{4}{*}{15.75}&
    $\mc{E}^{t}_{1^2}= 3.49e^{-07}$  & $\mc{E}^{c}_{1^2}=1.74e^{-05}$ & $\mc{E}^{M}_{1^2}=9.22e^{-08}$
    \\
& & & & & $\mc{E}^{t}_{2}= 4.44e^{-12}$  & $\mc{E}^{c}_{2}=3.70e^{-10}$ & $\mc{E}^{M}_{2}=5.19e^{-09}$
    \\
          &  & & & & $\mc{E}^{t}_{5}= 2.74e^{-11}$  & $\mc{E}^{c}_{5}=1.32e^{-10}$ & $\mc{E}^{M}_{5}=8.72e^{-10}$
       \\
           \hline
    \multirow{4}{*}{$400\times 400\times 400$}& \multirow{4}{*}{2} &  \multirow{4}{*}{51.72} &\multirow{4}{*}{38.93}&\multirow{4}{*}{38.92}&
    $\mc{E}^{t}_{1^2}= 4.32e^{-06}$  & $\mc{E}^{c}_{1^2}=9.64e^{-05}$ & $\mc{E}^{M}_{1^2}=4.08e^{-07}$
    \\
& & & & & $\mc{E}^{t}_{2}= 2.14e^{-11}$  & $\mc{E}^{c}_{2}=3.70e^{-10}$ & $\mc{E}^{M}_{2}=1.19e^{-09}$
    \\
          &  & & & & $\mc{E}^{t}_{5}= 6.55e^{-11}$  & $\mc{E}^{c}_{5}=4.13e^{-10}$ & $\mc{E}^{M}_{5}=1.29e^{-09}$
    \\
    \hline
    \end{tabular}
    \label{tab:comp-draz-m}
    \end{center}
    \end{table}

Example \ref{exa-com} is developed in view of Remark \ref{rm1.2}.

\begin{example}\label{exa-com}
Consider  $M=\mathrm{rand}(n)$ and  randomly choose $\mc{A} \in \mbc^{n \times n\times n}$ with tubal index equal to $1$ or $2$.
A comparison of the mean CPU time (MT) corresponding to the randomly generated invertible matrix $M$, under tubal index and $\mat$ operation is presented in Table \ref{tab:comp-draz}.
\begin{table}[H]
    \centering
    \caption{Comparison of mean CPU times for computing $\mc{A}^{\D }$}
     \vspace{0.2cm}
        \renewcommand{\arraystretch}{1.1}
    \begin{tabular}{|ccc|ccc|}
    \hline
      \scriptsize{Size of $\mc{A}$}   & $k$ & \scriptsize{MT (Using tubal index)} & \scriptsize{Size of $\mat(\mc{A})$} & \scriptsize{$ind(\mat(\mc{A}))$} &\scriptsize{MT (Using $\mat$ and $\mat^{-1}$)} \\
         \hline
   \scriptsize{$60\times 60\times 60$}  & 1 &0.19 & \scriptsize{$3600\times 3600$}& 1 & 8.10  \\
      \hline
        \scriptsize{$80\times 80\times 80$} & 1 &0.37 &\scriptsize{$6400\times 6400$}& 1 & 39.37   \\
      \hline
         \scriptsize{$100\times 100\times 100$}  &2  &0.94 &\scriptsize{$10000\times 10000$}& 2&  169.72\\
      \hline
        \scriptsize{$120\times 120\times 120$}  &2  & 1.60  & \scriptsize{$14400\times 14400$} &2&  434.46\\
      \hline
    \end{tabular}
        \label{tab:comp-draz}
\end{table}
\end{example}

\subsection{Characterizations of the Drazin inverse}
The results stated in Proposition \ref{PropTD1} can be verified in light of the definition of the Drazin inverse.
\begin{proposition}\label{PropTD1}
Let $M\in\mbc^{p\times p}$ and $\mc{A} \in \mbc^{m\times m\times p}$ with  tubal index $k$. Then
\begin{enumerate}
    \item[\rm (i)] $(\mc{A}\m\mc{A}^\D)^s=\mc{A}\m\mc{A}^\D=\mc{A}^s\m(\mc{A}^\D)^s$ and $(\mc{A}^{s})^{\D} =(\mc{A}^{\D})^{s}$ for all $s\in\mathbb{N}$.
    \item[\rm (ii)] $(\mc{A}^*)^\D=(\mc{A}^\D)^*$.
    \item[\rm (iii)] $((\mc{A}^{\D})^{\D})^{\D}=\mc{A}^{\D}.$
    \end{enumerate}
\end{proposition}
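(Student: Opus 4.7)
The plan is to reduce every claim to the corresponding well-known identity for the matrix Drazin inverse, exploiting Proposition \ref{propdr}, i.e., $\mc{A}^\D=\mat^{-1}(\mat(\mc{A})^\D)$, together with the fact that $\mat$ is a linear and multiplicative isomorphism (so that $\mat(\mc{A}\m\mc{B})=\mat(\mc{A})\mat(\mc{B})$, $\mat(\mc{A})^s = \mat(\mc{A}^s)$, and $\mat$ commutes with conjugate transposition because $\mat(\mc{A})$ is block diagonal with the frontal slices of $\tilde{\mc{A}}$ on the diagonal).

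For part (i), I would first establish the idempotence of $\mc{A}\m\mc{A}^\D$: using $\mc{A}^\D\m\mc{A}\m\mc{A}^\D=\mc{A}^\D$ from Definition \ref{drdef}, I get $(\mc{A}\m\mc{A}^\D)^2=\mc{A}\m(\mc{A}^\D\m\mc{A}\m\mc{A}^\D)=\mc{A}\m\mc{A}^\D$, and an induction on $s$ gives $(\mc{A}\m\mc{A}^\D)^s=\mc{A}\m\mc{A}^\D$. Since $\mc{A}$ and $\mc{A}^\D$ commute (again by Definition \ref{drdef}), powers split: $\mc{A}^s\m(\mc{A}^\D)^s=(\mc{A}\m\mc{A}^\D)^s$, completing the first chain. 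For $(\mc{A}^s)^\D=(\mc{A}^\D)^s$, the quickest route is to verify the three defining identities in Definition \ref{drdef} with $\mc{X}=(\mc{A}^\D)^s$ and $\mc{A}$ replaced by $\mc{A}^s$: the commutation and the equation $\mc{X}\m\mc{A}^s\m\mc{X}=\mc{X}$ follow from commutativity and the idempotence just proved, while $\mc{X}\m(\mc{A}^s)^{k+1}=(\mc{A}^s)^k$ follows by grouping powers and applying $\mc{A}^\D\m\mc{A}^{k+1}=\mc{A}^k$ $s$ times (after raising the tubal index of $\mc{A}^s$, which divides that of $\mc{A}$); uniqueness of the Drazin inverse then identifies $(\mc{A}^s)^\D$ with $(\mc{A}^\D)^s$.

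For part (ii), reading $\mc{A}^*$ as the conjugate transpose $\mc{A}^H$, I would combine Proposition \ref{propdr} with the block-diagonal identity $\mat(\mc{A}^H)=\mat(\mc{A})^H$, which is immediate from the definition of $\mc{A}^H$ and the block structure in Definition \ref{Kernfeldlinearrati}. Then
\[
(\mc{A}^H)^\D=\mat^{-1}\!\bigl(\mat(\mc{A}^H)^\D\bigr)=\mat^{-1}\!\bigl((\mat(\mc{A})^H)^\D\bigr)=\mat^{-1}\!\bigl((\mat(\mc{A})^\D)^H\bigr)=(\mc{A}^\D)^H,
\]
using the classical matrix identity $(A^H)^\D=(A^\D)^H$ in the middle step. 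Part (iii) is handled identically: applying Proposition \ref{propdr} three times reduces the claim to $((A^\D)^\D)^\D=A^\D$ for matrices, which is standard.

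The only non-routine aspect is checking that each reduction to matrices is legitimate, namely that $\mat$ intertwines the operations appearing in the statements (powers, $*_M$-multiplication, conjugate transpose, and Drazin inversion). Once those commutation properties are in place, everything else is a bookkeeping exercise, so I expect the mildest obstacle to be the power-splitting argument in part (i) when verifying the third Drazin equation for $\mc{A}^s$, where one must track how the tubal index changes when passing from $\mc{A}$ to $\mc{A}^s$.
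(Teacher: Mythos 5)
Your proposal is correct, and it matches what the paper intends: the paper offers no written proof, stating only that the claims ``can be verified in light of the definition of the Drazin inverse,'' and your argument does exactly that --- direct verification from Definition \ref{drdef} for part (i), and the $\mat$-isomorphism reduction of Proposition \ref{propdr} (with $\mat(\mc{A}^{H})=\mat(\mc{A})^{H}$ and the classical matrix identities) for parts (ii) and (iii), both of which are tools the paper itself supplies. One small inaccuracy worth fixing: the tubal index of $\mc{A}^{s}$ is $\max_i\lceil k_i/s\rceil$, which need not divide the tubal index $k$ of $\mc{A}$; what your verification of $\mc{X}\m(\mc{A}^{s})^{K+1}=(\mc{A}^{s})^{K}$ actually requires is only the inequality $sK\ge k$ (so that $\mc{A}\m\mc{A}^{\D}\m\mc{A}^{sK}=\mc{A}^{sK}$), and that inequality does hold, so the argument goes through unchanged --- alternatively you could sidestep all index bookkeeping by pushing part (i) through $\mat$ as well.
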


A tensor $\mc{A}\in\mbc^{m\times m\times p}$ is nilpotent if $(\mc{A}\times_3 M)^{(i)}$ is a nilpotent matrix, for each $i,~i=\overline{1,p}$.
If the nilpotent index of respective frontal slices $(\mc{A}\times_3 M)^{(i)}$ is $n_i (\leq m )$ and $n=\max_{1\leq i\leq p}n_i$ then we can also say that $\mc{A}$ is nilpotent if $\mc{A}^n=\mc{O}$, where $\mc{O}$ is the null tensor.
In this case, $n$ is the nilpotent index of $\mc{A}$.

\begin{example}\rm\label{ex-nil}
  Let $\mc{A}\in\mbc^{3\times 3\times 3}$ be determined by the entries
  \[\mc{A}(:,:,1)=\begin{bmatrix}
  1  &   3   &   1\\
	    -1  &   -3   &  -3\\
	    -1   &  2   &   2
  \end{bmatrix},~\mc{A}(:,:,2)=\begin{bmatrix}
   0   &   2  &    2\\
	     0   &  -2  &   -2\\
	     0    &  2   &   2
  \end{bmatrix},~\mc{A}(:,:,3)=\begin{bmatrix}
  0  &    4   &   0\\
	    -1  &   -3  &   -3\\
	     0    &  1     & 3
  \end{bmatrix},\]
  and $M=\begin{bmatrix}
 1     & 0   &  -1\\
     0    &  1   &   0\\
     0    &  1  &   -1
\end{bmatrix}$. Since $[\tilde{\mc{A}}(:,:,1)]^2=\begin{bmatrix}
    0& 0 &0\\
     0& 0 &0\\
     0& 0 &0
\end{bmatrix}=[\tilde{\mc{A}}(:,:,2)]^2$ and $[\tilde{\mc{A}}(:,:,3)]^3=\begin{bmatrix}
    0& 0 &0\\
     0& 0 &0\\
     0& 0 &0
\end{bmatrix}$, the tensor $\mc{A}$ is nilpotent with index $3=\max\{2,2,3\}$.
Using Algorithm \ref{alg:mdrazin}, we can evaluate the tubal index $k=3$ and $\mc{A}^\D=\mc{O}$.
\end{example}

Remark \ref{rem-nil} is stated from the Example \ref{ex-nil} and Algorithm \ref{alg:mdrazin}.

\begin{remark}\label{rem-nil}
    If tensor $\mc{A}\in\mbc^{m\times m\times p}$ is nilpotent then $\mc{A}^\D=\mc{O}$. Further, the equality $(\mc{A}^\D)^\D=A$ need not be true always.
\end{remark}

The  results below provide an affirmative answer to the second part of Remark \ref{rem-nil}.

\begin{theorem}
Let $M\in\mbc^{p\times p}$ and $\mc{A} \in \mbc^{m\times m\times p}$ be with the tubal index $k$.
Then $(\mc{A}^{\D})^{\D}=\mc{A}\Longleftrightarrow k=1$.
\end{theorem}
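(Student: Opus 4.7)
My plan is to transfer the claim to the transform domain and work slicewise, then invoke the classical matrix fact that $(A^{\D})^{\D}=A$ iff $\ind(A)\le 1$. By Algorithm~\ref{alg:mdrazin}, the Drazin inverse under the $M$-product admits the slicewise representation
\[
\widetilde{\mc{A}^{\D}}(:,:,i)=\bigl(\tilde{\mc{A}}(:,:,i)\bigr)^{\D},\qquad i=\overline{1,p}.
\]
Iterating this identity once more yields $\widetilde{(\mc{A}^{\D})^{\D}}(:,:,i)=\bigl((\tilde{\mc{A}}(:,:,i))^{\D}\bigr)^{\D}$. Since the $3$-mode product with $M$ is invertible, the equality $(\mc{A}^{\D})^{\D}=\mc{A}$ is equivalent to $((\tilde{\mc{A}}(:,:,i))^{\D})^{\D}=\tilde{\mc{A}}(:,:,i)$ for every $i=\overline{1,p}$.

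I would then appeal to the following matrix-level equivalence: for $A\in\mbc^{m\times m}$ with $\ind(A)=\ell$, one has $(A^{\D})^{\D}=A$ iff $\ell\le 1$. The cleanest route is the core-nilpotent decomposition $A=P\,\mathrm{diag}(C,N)\,P^{-1}$ with $C$ invertible and $N$ nilpotent of index $\ell$; this gives $A^{\D}=P\,\mathrm{diag}(C^{-1},0)\,P^{-1}$ and hence $(A^{\D})^{\D}=P\,\mathrm{diag}(C,0)\,P^{-1}$, which coincides with $A$ precisely when the nilpotent block $N$ vanishes, i.e.\ when $\ell\le 1$.

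Putting the two pieces together, $(\mc{A}^{\D})^{\D}=\mc{A}$ holds iff every slicewise index $k_i=\ind(\tilde{\mc{A}}(:,:,i))$ satisfies $k_i\le 1$, which is equivalent to the tubal index $k=\max_i k_i$ being at most $1$. The case $k=0$ reduces to $\mc{A}$ being invertible with $\mc{A}^{\D}=\mc{A}^{-1}$, for which the identity is automatic, so the genuine content of the stated theorem lies at the boundary $k=1$. I anticipate no serious technical obstacle: the slicewise reduction is immediate from Algorithm~\ref{alg:mdrazin}, and the only substantive ingredient is the matrix lemma, whose proof via the core-nilpotent decomposition is standard.
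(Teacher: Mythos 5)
Your proposal is correct, but it organizes the argument differently from the paper. The paper proves the forward implication ($k=1\Rightarrow(\mc{A}^{\D})^{\D}=\mc{A}$) entirely at the tensor level, by checking directly that $\mc{A}$ satisfies the defining equations of the Drazin inverse of $\mc{A}^{\D}$ (using $\mc{A}=\mc{A}^{\D}\m\mc{A}^{2}$ and $\mc{A}\m(\mc{A}^{\D})^{2}=\mc{A}^{\D}$), and only for the converse does it pass to the matrix side: it applies $\mat$, deduces $\ind(\mat(\mc{A}))=1$, and then uses the block-diagonal structure and range equalities $\mathcal{R}(\tilde{\mc{A}}(:,:,i))=\mathcal{R}(\tilde{\mc{A}}(:,:,i)^{2})$ to pull the index condition back to each frontal slice. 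You instead reduce both directions at once to the slicewise statement in the transform domain and invoke the classical equivalence $(A^{\D})^{\D}=A\Leftrightarrow\ind(A)\le 1$ via the core-nilpotent decomposition, which the paper never makes explicit (its converse implicitly relies on the same matrix fact). Your route is more uniform and makes the key matrix lemma visible; the paper's forward direction is more self-contained within the $M$-product calculus. Two small remarks: the slicewise formula $\widetilde{\mc{A}^{\D}}(:,:,i)=(\tilde{\mc{A}}(:,:,i))^{\D}$ is better justified by Proposition \ref{propdr} together with the block-diagonal form of $\mat(\mc{A})$ than by citing Algorithm \ref{alg:mdrazin}, which is a computational recipe rather than a proof; and your observation that the matrix lemma only yields $k\le 1$ (so the invertible case $k=0$ also satisfies the identity) is in fact a point the paper glosses over when it asserts $\ind(\mat(\mc{A}))=1$, so your treatment of that boundary case is, if anything, more careful.
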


\begin{proof}
Let $k=1$. Then $\mc{A}=\mc{A}^\D\m\mc{A}^2=\mc{A}\m\mc{A}^\D\m\mc{A}$ and
$\mc{A}\m(\mc{A}^\D)^2=\mc{A}^\D$ and consequently, it follows $(\mc{A}^\D)^\D=\mc{A}$.

Conversely, let $(\mc{A}^{\D})^{\D}=\mc{A}$ and $\tilde{\mc{A}}=\mc{A}\times_3M$.
Then $\mat(\mc{A})=\mat((\mc{A}^{\D })^{\D })=([\mat(\mc{A})]^D)^D$.
Thus $\ind(\mat(\mc{A}))=1$ and subsequently, $\mathcal{R}(\mat(\mc{A}))=\mathcal{R}(\mat(\mc{A})^2)$, where $\mathcal{R}$ denotes the matrix range.
That leads to
\[\mathcal{R}(\blkd(\tilde{\mc{A}}(:,:,1),\tilde{\mc{A}}(:,:,2),\ldots,\tilde{\mc{A}}(:,:,p)))=\mathcal{R}(\blkd(\tilde{\mc{A}}^{2}(:,:,1),\tilde{\mc{A}}^2(:,:,2),\ldots,\tilde{\mc{A}}^2(:,:,p))).\]
Therefore, $\mathcal{R}(\tilde{\mc{A}}(:,:,i))=\mathcal{R}(\tilde{\mc{A}}(:,:,i)^2)$ for each $i=\overline{1,p}$ and we conclude $\ind(\tilde{\mc{A}}(:,:,i))=1$, $1\leq i\leq p$.
\end{proof}

In the next example, we illustrate that the reverse order law ($(\mc{A}\m\mc{B})^{\D}=\mc{B}^{\D }\m\mc{A}^{\D}$)
does not generally need to be true under the $M$-product.
\begin{example}\label{exa-rev}
Choose $\mc{A}$, $M$ as in Example \ref{exa3.4} and $\mc{B}\in \mbc^{m\times m\times p}$ with entries
\[\mc{B}(:,:,1)=\begin{bmatrix}
  2  & -18  &  -5\\
	    -1  &   9 &  -11\\
	     4  &  -4  &   2
\end{bmatrix},~
 \mc{B}(:,:,2)=\begin{bmatrix}
  0  &  10 &    3\\
	     0  &  -4  &   6\\
	    -2    & 2  &  -1
\end{bmatrix},~\mc{B}(:,:,3)=\begin{bmatrix}
  -2 &    6   &  1\\
	     1 &   -4 &    4\\
	    -2   &  2&    -1
\end{bmatrix}.\]
 Then we find that tubal index of $\mc{B}$ and $\mc{A}\m\mc{B}$, respectively 2 and 2. Further, we calculate
\[((\mc{A}\m\mc{B})^{\D})(:,:,1)-(\mc{B}^\D\m\mc{A}^\D)(:,:,1)=\begin{bmatrix}
 -0.4222&    0.7649 &  -1.6111\\
	    0.7000 &   0.1240 &  -2.7500\\
	    1.9111 &  -0.5684 &  -0.4444
\end{bmatrix}\neq \mc{O},\]
\[((\mc{A}\m\mc{B})^{\D})(:,:,2)-(\mc{B}^\D\m\mc{A}^\D)(:,:,2)=\begin{bmatrix}
0.4611 &   -0.3824  &   1.0556\\
	    0.1500 &   -0.0620  &   1.7500\\
	   -1.2056  &   0.2842  &   0.2222
\end{bmatrix}\neq \mc{O},\]
\[((\mc{A}\m\mc{B})^{\D})(:,:,3)-(\mc{B}^\D\m\mc{A}^\D)(:,:,3)=\begin{bmatrix}
 -0.0389  &  -0.3824 &    0.5556\\
	   -0.8500&    -0.0620 &    1.0000\\
	   -0.7056 &    0.2842  &   0.2222
\end{bmatrix}\neq \mc{O}.\]
\end{example}

If the tensors $\mc{A}$ and $\mc{B}$ are commutative, then the reverse order is true as given in Theorem \ref{ThmROLT1}.
\begin{theorem}\label{ThmROLT1}
Let $M\in\mbc^{p\times p}$ and $\mc{A}, ~\mc{B} \in \mbc^{m\times m\times p}$ with respective tubal indices $k_1$ and $k_2$.
If $\mc{A}\m\mc{B}=\mc{B}\m\mc{A}$, then  $(\mc{A}\m\mc{B})^{\D}=\mc{B}^{\D}\m\mc{A}^{\D}=\mc{A}^{\D}\m\mc{B}^{\D}$.
\end{theorem}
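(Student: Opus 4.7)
The plan is to reduce the tensor identity to the corresponding matrix identity via the $\mat$ operator and then invoke the classical commuting reverse order law for the Drazin inverse of matrices. The three ingredients we need are: (i) $\mat$ is a multiplicative homomorphism with respect to the $M$-product, so $\mat(\mc{A}\m\mc{B})=\mat(\mc{A})\mat(\mc{B})$, which is immediate from Definition \ref{DeftProd}; (ii) Proposition \ref{propdr}, which gives $\mc{A}^{\D}=\mat^{-1}(\mat(\mc{A})^{\D})$ and likewise for $\mc{B}$ and $\mc{A}\m\mc{B}$; and (iii) the standard matrix identity that if $PQ=QP$ then $(PQ)^{\D}=Q^{\D}P^{\D}=P^{\D}Q^{\D}$.

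First I would apply $\mat$ to the commutativity hypothesis to obtain $\mat(\mc{A})\mat(\mc{B})=\mat(\mc{B})\mat(\mc{A})$. Since both $\mat(\mc{A})$ and $\mat(\mc{B})$ are block-diagonal matrices with $p$ blocks of size $m\times m$, this commutation holds block-wise as $\tilde{\mc{A}}^{(i)}\tilde{\mc{B}}^{(i)}=\tilde{\mc{B}}^{(i)}\tilde{\mc{A}}^{(i)}$ for each $i=\overline{1,p}$, which puts us in the scope of the matrix-level commuting reverse order law on each slice (or equivalently on the full block-diagonal matrix at once).

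Next I would apply the classical matrix result to obtain
\[
(\mat(\mc{A})\mat(\mc{B}))^{\D}=\mat(\mc{B})^{\D}\mat(\mc{A})^{\D}=\mat(\mc{A})^{\D}\mat(\mc{B})^{\D}.
\]
The left-hand side equals $\mat(\mc{A}\m\mc{B})^{\D}$ by multiplicativity of $\mat$, and the right-hand sides equal $\mat(\mc{B}^{\D})\mat(\mc{A}^{\D})=\mat(\mc{B}^{\D}\m\mc{A}^{\D})$ and $\mat(\mc{A}^{\D}\m\mc{B}^{\D})$ by Proposition \ref{propdr} combined again with multiplicativity. Applying $\mat^{-1}$ (which is well defined and linear by Algorithm \ref{alg:matinvINV} and the isomorphism property noted after Definition \ref{DeftProd}) and using Proposition \ref{propdr} once more on the left-hand side yields the desired chain of equalities
\[
(\mc{A}\m\mc{B})^{\D}=\mc{B}^{\D}\m\mc{A}^{\D}=\mc{A}^{\D}\m\mc{B}^{\D}.
\]

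The only real subtlety is ensuring the matrix-level commuting reverse order law applies, which requires the indices of $\mat(\mc{A})$ and $\mat(\mc{B})$ (equivalently, the tubal indices $k_1,k_2$) to be finite; this is given by hypothesis. There is no heavy computation: once the $\mat$/$\mat^{-1}$ translation is in place, the result is a direct transport of the classical matrix statement, and the block-diagonal structure ensures everything decouples frontal slice by frontal slice in the transform domain.
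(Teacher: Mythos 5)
Your proposal is correct, but it follows a genuinely different route from the paper. The paper proves the theorem entirely at the tensor level: starting from $\mc{A}\m\mc{B}=\mc{B}\m\mc{A}$ it first derives, by manipulating powers and the defining relations of the Drazin inverse, the auxiliary commutation identities $\mc{A}^{\D}\m\mc{B}=\mc{B}\m\mc{A}^{\D}$ and $\mc{A}^{\D}\m\mc{B}^{\D}=\mc{B}^{\D}\m\mc{A}^{\D}$, and then verifies directly that $\mc{X}=\mc{B}^{\D}\m\mc{A}^{\D}$ satisfies the three equations of Definition \ref{drdef} with $k=\max\{k_1,k_2\}$, concluding by uniqueness. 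You instead transport the statement through the $\mat$ isomorphism and invoke the classical commuting reverse-order law for matrix Drazin inverses, pulling the identity back with Proposition \ref{propdr}. This is sound: $\mat$ is multiplicative by Definition \ref{DeftProd}, the block-diagonal structure makes the hypothesis decouple slice-wise, and the Drazin inverse of a square matrix always exists, so your worry about finiteness of indices is a non-issue rather than a genuine subtlety. What your approach buys is brevity and a clean separation of concerns -- all the algebra is delegated to a standard matrix theorem, which you should cite explicitly (it is Drazin's original result), and to Proposition \ref{propdr}, which the paper imports from \cite{behera2024}. What the paper's approach buys is self-containedness within the $M$-product calculus and the intermediate identities (e.g., $\mc{B}\m\mc{A}^{\D}=\mc{A}^{\D}\m\mc{B}$), which are reusable elsewhere, for instance in arguments like Theorem \ref{ThmAddD} where a purely tensor-level toolkit is convenient.
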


\begin{proof}
The assumption $\mc{A}\m\mc{B}=\mc{B}\m\mc{A}$ implies
\begin{eqnarray*}
\mc{A}^{\D }\m\mc{B}&=&{ (\mc{A}^{\D })^2\m\mc{A}\m\mc{B}=\mc{A}^{\D }\m(\mc{A}^\D \m\mc{B})\m\mc{A}}=(\mc{A}^{\D })^{k_1}\m(\mc{A}^\D \m\mc{B})\m\mc{A}^{k_1}\\
&=&(\mc{A}^{\D })^{k_1}\m(\mc{A}^\D \m\mc{B})\m\mc{A}^{k_1+1}\m\mc{A}^\D =(\mc{A}^{\D })^{k_1}\m\mc{A}^{k_1}\m\mc{B}\m\mc{A}^\D \\
&=&\mc{A}^{\D }\m\mc{A}\m\mc{B}\m\mc{A}^\D .
\end{eqnarray*}
Similarly, we verify that $\mc{B}\m\mc{A}^\D =\mc{A}^{\D }\m\mc{A}\m\mc{B}\m\mc{A}^{\D }=\mc{A}^\D \m\mc{B}$.
Next, $\mc{A}^\D \m\mc{B}^\D =\mc{B}^\D \m\mc{A}^\D $ follows from the identities listed below.
\begin{eqnarray*}
\mc{B}^{\D }\m\mc{A}^{\D }&=&\mc{B}^{\D }\m\mc{A}\m(\mc{A}^{\D })^2=\mc{A}\m\mc{B}^{\D }\m(\mc{A}^{\D })^{2}=\cdots=\mc{A}^{k_1}\m\mc{B}^{\D }\m(\mc{A}^{\D })^{k_1+1}\\
&=&\mc{A}^{k_1+1}\m\mc{A}^{\D }\m\mc{B}^{\D }\m(\mc{A}^{\D })^{k_1+1}=\mc{A}^{\D }\m\mc{A}^{k_1+1}\m\mc{B}^{\D }\m(\mc{A}^{\D })^{k_1+1}\\
&=&\mc{A}^{\D }\m\mc{B}^{\D }\m\mc{A}^{k_1+1}\m(\mc{A}^{\D })^{k_1+1}=\mc{A}^{\D }\m\mc{B}^{\D }\m\mc{A}\m\mc{A}^{\D },
\end{eqnarray*}
\begin{eqnarray*}
\mc{A}^{\D }\m\mc{B}^{\D } &=&(\mc{A}^{\D })^2\m\mc{A}\m\mc{B}^{\D }=(\mc{A}^{\D })^{2}\m\mc{B}^{\D }\m\mc{A}=\cdots=(\mc{A}^{\D })^{k_1+1}\m\mc{B}^{\D }\m\mc{A}^{k_1}\\
&=&(\mc{A}^{\D })^{k_1+1}\m\mc{B}^{\D }\m\mc{A}^{k_1+1}\m\mc{A}^{\D }=(\mc{A}^{\D })^{k_1+1}\m\mc{A}^{k_1}\m\mc{B}^{\D }\m\mc{A}\m\mc{A}^{\D }\\
&=&\mc{A}^{\D }\m\mc{B}^{\D }\m\mc{A}\m\mc{A}^{\D }.
\end{eqnarray*}
Let $\mc{X}=\mc{B}^{\D }\m\mc{A}^{\D }$ and $k=\max\{k_1,k_2\}$. Then
\begin{eqnarray*}
(\mc{A}\m\mc{B})^{k+1}\m\mc{X} &=& \mc{A}^{k+1}\m\mc{B}^{k+1}\m\mc{B}^{\D }\m\mc{A}^{\D }=\mc{A}^{k+1}{\star}_{M }\mc{B}^{k}\m\mc{A}^{\D }
=\mc{A}^k\m\mc{B}^k\\
&=&(\mc{A}\m\mc{B})^{k},
\end{eqnarray*}
\begin{eqnarray*}
\mc{X}\m(\mc{A}\m\mc{B})\m\mc{X} &=&\mc{B}^{\D }\m\mc{A}^{\D }\m\mc{A}\m\mc{B}\m\mc{B}^{\D }\m\mc{A}^{\D }\\
&=&\mc{B}^{\D }\m\mc{B}\m\mc{B}^\D \m\mc{A}^{\D }\m\mc{A}\m\mc{A}^{\D }=\mc{B}^{\D }\m\mc{A}^{\D }=\mc{X},
\end{eqnarray*}
\begin{eqnarray*}
\mc{A}\m\mc{B}\m\mc{X} & =&\mc{A}\m \mc{B}\m\mc{B}^{\D }\m\mc{A}^{\D }=\mc{A}\m\mc{B}^{\D }\m\mc{B}\m\mc{A}^{\D }=\mc{B}^{\D }\m\mc{A}\m\mc{A}^{\D }\m\mc{B}\\
&=&\mc{B}^{\D }\m\mc{A}^{\D }\m\mc{A}\m\mc{B}=\mc{X}\m\mc{A}\m\mc{B}.
\end{eqnarray*}
Hence, $\mc{X}=(\mc{A}\m\mc{B})^\D$.
As a result, $(\mc{A}\m\mc{B})^{\D }= \mc{B}^{\D }\m\mc{A}^{\D }=\mc{A}^{\D }\m\mc{B}^{\D }$ .
\end{proof}

In general, the Drazin inverse of a tensor sum is not equal to the sum of individual Drazin inverses. 
This is confirmed in Example \ref{ExmTD+}.

\begin{example}\label{ExmTD+}\rm
Consider $\mc{A}$, $M$ as in Example \ref{exa3.4} and $\mc{B}$ as in Example \ref{exa-rev}.
Thus the tubal index of both $\mc{A}$, $\mc{B}$ is $2$ and the tubal index of $\mc{A}\pm\mc{B}$ is $1$.
Next we evaluate,
\[(\mc{A} + \mc{B})^{\D }(:,:1)-(\mc{B}^\D +\mc{A}^\D )(:,:,1)=\begin{bmatrix}
4.372 &  29.160 & -31.112\\
	    0.378  & -1.160&   8.112\\
	  -10.628  & -0.340  & 15.888
\end{bmatrix}\neq\mc{O},\]
\[(\mc{A} + \mc{B})^{\D }(:,:2)-(\mc{B}^\D +\mc{A}^\D )(:,:,2)=\begin{bmatrix}
-2.436  &-16.580 &  17.056\\
	    0.186  &  0.580 &  -4.056\\
	    5.3140 &  0.170 &  -6.944
\end{bmatrix}\neq\mc{O},\]
\[(\mc{A} + \mc{B})^{\D }(:,:3)-(\mc{B}^\D +\mc{A}^\D )(:,:,3)=\begin{bmatrix}
 -1.936   &-8.580 &  11.056\\
	   -0.564   & 0.580 &  -3.056\\
	    5.314  &  0.170 &  -8.944
\end{bmatrix}\neq\mc{O},\]
\[(\mc{A} - \mc{B})^{\D }(:,:1)-(\mc{B}^\D -\mc{A}^\D )(:,:,1)=\begin{bmatrix}
19.372  & 25.910 &  -6.612\\
	   -3.622 & -12.160 &  25.112\\
	  -10.628  &  2.410  &  4.388
\end{bmatrix}\neq\mc{O},\]
\[(\mc{A} - \mc{B})^{\D }(:,:2)-(\mc{B}^\D -\mc{A}^\D )(:,:,2)=\begin{bmatrix}
 -10.936 & -14.955&    2.806\\
	    1.186  &  6.580 & -15.056\\
	    6.314 &  -1.205 &  -2.194
\end{bmatrix}\neq\mc{O},\]
\[(\mc{A} - \mc{B})^{\D }(:,:3)-(\mc{B}^\D -\mc{A}^\D )(:,:,3)=\begin{bmatrix}
 -6.436  & -6.955 &   2.806\\
	    2.436&   3.580 &  -7.056\\
	    4.314&  -1.205  & -2.194
\end{bmatrix}\neq\mc{O}.\]
 \end{example}
Results obtained in theorem \ref{ThmAddD} investigate the additive property of the Drazin inverse.

\begin{theorem}\label{ThmAddD}
Let $M\in\mbc^{p\times p}$ and $\mc{A}, ~\mc{B} \in \mbc^{m\times m\times p}$ with respective tubal index $k_1(\geq 1)$ and $k_2(\geq 1)$.
Then $\mc{A}\m\mc{B}=\mc{B}\m\mc{A}=\mc{O}$ implies $(\mc{A}\pm\mc{B})^{\D }=\mc{A}^{\D }\pm \mc{B}^{\D }$.
\end{theorem}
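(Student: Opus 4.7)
The plan is to reduce the identity $(\mc{A}\pm\mc{B})^{\D}=\mc{A}^{\D}\pm\mc{B}^{\D}$ to verifying the three defining equations of the Drazin inverse for the candidate tensor $\mc{X}=\mc{A}^{\D}\pm\mc{B}^{\D}$, and then invoking uniqueness (Definition \ref{drdef}). The key preliminary observation is that the hypothesis $\mc{A}\m\mc{B}=\mc{B}\m\mc{A}=\mc{O}$ forces all mixed products between $\mc{A},\mc{B},\mc{A}^{\D},\mc{B}^{\D}$ to vanish.

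First I would establish the six orthogonality relations
\[
\mc{A}^{\D}\m\mc{B}=\mc{B}\m\mc{A}^{\D}=\mc{A}\m\mc{B}^{\D}=\mc{B}^{\D}\m\mc{A}=\mc{A}^{\D}\m\mc{B}^{\D}=\mc{B}^{\D}\m\mc{A}^{\D}=\mc{O}.
\]
For instance, using $\mc{A}^{\D}=(\mc{A}^{\D})^{k_1+1}\m\mc{A}^{k_1}$ from Definition \ref{drdef} together with $\mc{A}\m\mc{B}=\mc{O}$ (which iterates to $\mc{A}^{k_1}\m\mc{B}=\mc{O}$ since $k_1\geq 1$), one obtains $\mc{A}^{\D}\m\mc{B}=\mc{O}$. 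The relation $\mc{A}\m\mc{B}^{\D}=\mc{O}$ follows symmetrically from $\mc{A}\m\mc{B}=\mc{O}$ and $\mc{B}^{\D}=\mc{B}^{k_2}\m(\mc{B}^{\D})^{k_2+1}$. Finally, $\mc{A}^{\D}\m\mc{B}^{\D}=(\mc{A}^{\D})^{k_1+1}\m\mc{A}^{k_1}\m\mc{B}^{\D}=\mc{O}$ since $\mc{A}\m\mc{B}^{\D}=\mc{O}$; the remaining identities are obtained by interchanging the roles of $\mc{A}$ and $\mc{B}$.

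Next I would show by an easy induction on $s\geq 1$ that
\[
(\mc{A}+\mc{B})^{s}=\mc{A}^{s}+\mc{B}^{s},\qquad (\mc{A}-\mc{B})^{s}=\mc{A}^{s}+(-1)^{s}\mc{B}^{s},
\]
because the hypothesis $\mc{A}\m\mc{B}=\mc{B}\m\mc{A}=\mc{O}$ kills every mixed term in the expansion. Now set $k=\max\{k_1,k_2\}$ and $\mc{X}=\mc{A}^{\D}\pm\mc{B}^{\D}$. Using the orthogonality relations of the first step, a direct calculation yields
\[
\mc{X}\m(\mc{A}\pm\mc{B})=\mc{A}^{\D}\m\mc{A}+\mc{B}^{\D}\m\mc{B}=\mc{A}\m\mc{A}^{\D}+\mc{B}\m\mc{B}^{\D}=(\mc{A}\pm\mc{B})\m\mc{X},
\]
verifying the commutativity condition, and
\[
\mc{X}\m(\mc{A}\pm\mc{B})\m\mc{X}=\mc{A}^{\D}\m\mc{A}\m\mc{A}^{\D}\pm\mc{B}^{\D}\m\mc{B}\m\mc{B}^{\D}=\mc{A}^{\D}\pm\mc{B}^{\D}=\mc{X}.
\]
For the third Drazin axiom, combining the step-two power formula with $\mc{A}^{\D}\m\mc{B}^{k+1}=\mc{B}^{\D}\m\mc{A}^{k+1}=\mc{O}$ and $\mc{A}^{\D}\m\mc{A}^{k+1}=\mc{A}^{k}$, $\mc{B}^{\D}\m\mc{B}^{k+1}=\mc{B}^{k}$ produces
\[
\mc{X}\m(\mc{A}\pm\mc{B})^{k+1}=\mc{A}^{k}\pm^{\,k}\mc{B}^{k}=(\mc{A}\pm\mc{B})^{k},
\]
where the sign tracking in the minus case uses $(-1)^{k+2}=(-1)^{k}$. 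Uniqueness of the Drazin inverse then gives $(\mc{A}\pm\mc{B})^{\D}=\mc{X}=\mc{A}^{\D}\pm\mc{B}^{\D}$.

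The main obstacle is the first step: rigorously establishing the vanishing of the six cross products. Once that is done, the remaining verifications are mechanical, although the minus case requires careful sign bookkeeping in the power expansions and in the $\mc{X}\m(\mc{A}-\mc{B})^{k+1}$ identity.
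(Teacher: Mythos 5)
Your proposal is correct and follows essentially the same route as the paper: show the mixed products involving $\mc{A},\mc{B},\mc{A}^{\D},\mc{B}^{\D}$ vanish, expand $(\mc{A}\pm\mc{B})^{k+1}$ with $k=\max\{k_1,k_2\}$, verify the three defining equations of the Drazin inverse for $\mc{A}^{\D}\pm\mc{B}^{\D}$, and conclude by uniqueness. You are merely more explicit than the paper about the full list of orthogonality relations and the sign bookkeeping in the minus case, which the paper leaves as ``verified similarly.''
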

\begin{proof}
Let $\mc{A}\m\mc{B}=\mc{B}\m\mc{A}=\mc{O}$ and $k=\max\{k_1,k_2\}$. Then
 $(\mc{A}+\mc{B})^{k+1}=\mc{A}^{k+1}+\mc{B}^{k+1}$,
\[\mc{A}\m\mc{B}^{\D }=\mc{A}\m\mc{B}\m (\mc{B}^{\D })^2=\mc{O},\mbox{ and }\mc{B}\m\mc{A}^{\D }=\mc{O}.
\]
Now
\[(\mc{A}+\mc{B})^{k+1}\m(\mc{A}^{\D }+\mc{B}^{\D })=(\mc{A}^{k+1}+\mc{B}^{k+1})\m(\mc{A}^{\D }+\mc{B}^{\D })=\mc{A}^k+\mc{B}^k=(\mc{A}+\mc{B})^{k},\]
\[(\mc{A}^{\D }+\mc{B}^{\D })\m(\mc{A}+\mc{B})=(\mc{A}^{\D }\m\mc{A}+\mc{B}^{\D }\m\mc{B}) =  (\mc{A}^{\D }+\mc{B}^{\D })(\mc{A}+\mc{B}), \mbox{ and }\]
\begin{eqnarray*}
 (\mc{A}^{\D }+\mc{B}^{\D })\m(\mc{A}+\mc{B})\m(\mc{A}^{\D }+\mc{B}^{\D })&=&(\mc{A}^{\D }\m\mc{A}+\mc{B}^{\D }\m\mc{B})\m(\mc{A}+\mc{B})   \\
 &=&\mc{A}^{\D }\m\mc{A}\m\mc{A}^\D +\mc{B}^{\D }\m\mc{B}\m\mc{B}^\D =\mc{A}^{\D }+\mc{B}^{\D }.
\end{eqnarray*}
Thus $\mc{A}^{\D }+\mc{B}^{\D }=(\mc{A}+\mc{B})^\D$.
Similarly, $(\mc{A}-\mc{B})^{\D }=\mc{A}^{\D }-\mc{B}^{\D }$ can be verified.
\end{proof}

\section{Core-EP inverse of a tensor under $M$-product}

The core-EP inverse under $M$-product is established in Definition \ref{cepdef}.
\begin{definition}\label{cepdef}
Let $M\in\mbc^{p\times p}$ and $\mc{A} \in \mbc^{m\times m\times p}$ with  tubal index $k$. If a tensor $\mc{X}\in \mbc^{m\times m\times p}$ satisfies $\mc{X}\m\mc{A}^{k+1}=\mc{A}^k$, $\mc{A}\m\mc{X}^2=\mc{X}$ and $(\mc{A}\m\mc{X})^*=\mc{A}\m\mc{X}$ then $\mc{X}$ is called the core-EP inverse of $\mc{A}$ and denoted by $\mc{A}^{\ep}$.
\end{definition}
The core-EP inverse can be defined using $\mat$ and $\mat^{-1}$ operators, as stated in Proposition \ref{propep}.

\begin{proposition}\label{propep}
Let $M\in\mbc^{p\times p}$ and $\mc{A} \in \mbc^{m\times m\times p}$ with  $\ind(\mat(\mc{A})) = k$. Then
\[\mc{A}^{\ep} = \mat^{-1}(\mat(\mc{A})^{\ep}).\]
\end{proposition}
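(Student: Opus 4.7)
The plan is to verify that $\mc{X}:=\mat^{-1}(\mat(\mc{A})^{\ep})$ satisfies the three axioms defining the core-EP inverse in Definition~\ref{cepdef}, and then to conclude $\mc{X}=\mc{A}^{\ep}$ via uniqueness. The backbone of the argument is the homomorphism property of $\mat$ (direct from Definition~\ref{DeftProd}, i.e., $\mat(\mc{A}\m\mc{B})=\mat(\mc{A})\mat(\mc{B})$), together with its compatibility with Hermitian conjugation.

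First I would record the identity $\mat(\mc{A}^H)=(\mat(\mc{A}))^H$. This follows block-wise: the $i$-th diagonal block of $\mat(\mc{A}^H)$ is, by the definition of $\mc{A}^H$, the matrix $(\tilde{\mc{A}}^{(i)})^H$, and this coincides with the $i$-th diagonal block of the conjugate transpose of the block-diagonal matrix $\mat(\mc{A})=\blkd(\tilde{\mc{A}}^{(1)},\dots,\tilde{\mc{A}}^{(p)})$. I would also observe that the block-diagonal shape of $\mat(\mc{A})$ gives $\ind(\mat(\mc{A}))=\max_{1\le i\le p}\ind(\tilde{\mc{A}}^{(i)})$, so the hypothesis $\ind(\mat(\mc{A}))=k$ is consistent with the tubal-index $k$ appearing in Definition~\ref{cepdef}.

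Next, set $A=\mat(\mc{A})$ and $X=A^{\ep}$. The defining matrix core-EP identities read $XA^{k+1}=A^k$, $AX^2=X$, and $(AX)^*=AX$. Applying $\mat^{-1}$ and using the homomorphism property together with the conjugation identity above, each matrix identity transports into a tensor identity: $\mc{X}\m\mc{A}^{k+1}=\mc{A}^k$, $\mc{A}\m\mc{X}^2=\mc{X}$, and $(\mc{A}\m\mc{X})^H=\mc{A}\m\mc{X}$. Hence $\mc{X}$ solves the defining system of $\mc{A}^{\ep}$.

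For uniqueness, suppose $\mc{Y}$ also satisfies the three conditions of Definition~\ref{cepdef}. Applying $\mat$ to those identities shows that $\mat(\mc{Y})$ satisfies the matrix core-EP axioms for $A$; uniqueness of the matrix core-EP inverse then forces $\mat(\mc{Y})=X=\mat(\mc{X})$, and injectivity of $\mat$ yields $\mc{Y}=\mc{X}=\mat^{-1}(\mat(\mc{A})^{\ep})$. The only genuine subtlety is the conjugation compatibility $\mat(\mc{A}^H)=(\mat(\mc{A}))^H$; the remaining content is a routine transport of matrix identities through the ring isomorphism $\mat$.
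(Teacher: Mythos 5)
Your proposal is correct and follows essentially the same route as the paper: the paper likewise verifies the three defining identities for $\mc{X}=\mat^{-1}(\mat(\mc{A})^{\ep})$ by transporting the matrix core-EP relations through the $\mat$/$\mat^{-1}$ homomorphism and then concludes $\mc{A}^{\ep}=\mc{X}$. Your explicit remarks on the conjugation compatibility $\mat(\mc{A}^H)=(\mat(\mc{A}))^H$, the index consistency, and the uniqueness step merely spell out what the paper leaves implicit.
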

\begin{proof}
Let $\mc{X}=\mat^{-1}(\mat(\mc{A})^{\ep})$.
Thus using the properties of $\mat$ and $\mat^{-1}$, it can be concluded that
\begin{eqnarray*}
\mc{A}\m\mc{X}^2&=&\mat^{-1}(\mat(\mc{A}))\m\mat^{-1}((\mat(\mc{A})^{\ep})^2)\\
&=&\mat^{-1}(\mat(\mc{A})(\mat(\mc{A})^{\ep})^2) \\
&=&\mat^{-1}(\mat(\mc{A})^{\ep})=\mc{X},
    \end{eqnarray*}
    \begin{eqnarray*}
(\mc{A}\m\mc{X})^*&=&\mat^{-1}(\mat(\mc{A})(\mat(\mc{A})^{\ep})^*)=\mat^{-1}(\mat(\mc{A})\mat(\mc{A})^{\ep})\\
&=&\mat^{-1}(\mat(\mc{A}))\m\mat^{-1}(\mat(\mc{A})^{\ep})=\mc{A}\m\mc{X},
 \end{eqnarray*}
 \begin{eqnarray*}
\mc{X}\m\mc{A}^{k+1}&=&\mat^{-1}(\mat(\mc{A})^{\ep})\m\mat^{-1}(\mat(\mc{A})^{k+1})\\
&=&\mat^{-1}(\mat(\mc{A}^{\ep})(\mat(\mc{A})^{k+1}) \\
&=&\mat^{-1}(\mat(\mc{A})^k)=\mat^{-1}(\mat(\mc{A}^k))=\mc{A}^k.
    \end{eqnarray*}
Thus $\mc{A}^{\ep}=\mc{X}=\mat^{-1}(\mat(\mc{A})^{\ep})$.
\end{proof}

Algorithm \ref{alg:mep} is designed for computing the core-EP inverse in the framework of the $M$-product based on Definition \ref{cepdef}.
\begin{algorithm}[H]
 \caption{Core-EP inverse under $M$-product} \label{alg:mep}
\begin{algorithmic}[1]
  \Procedure{Core-EP inverse}{$\mc{A}^{\ep}$}
\State {\bf Input} $\mc{A} \in \mbc^{m \times m\times p}$ and  $M\in\mbc^{p\times p}$.
\State Compute $\tilde{\mc{A}}=\mc{A}\times_3 M$
\For{$i \gets 1$ to $p$}
\State $k_i=\ind(\tilde{\mc{A}}(:,:,i))$
   \EndFor
   \State Compute $k=\max_{1\leq i\leq p}k_i$
\For{$i \gets 1$ to $p$}
   \State $\mc{Z}(:,:,i) = (\tilde{\mc{A}}(:,:,i))^{\ep}$, where $A^{\ep}$ is the core-EP inverse of the matrix $A$ with index $k$
   \EndFor
\State Compute $\mc{X}=\mc{Z}\times_3M^{-1}$
\State \Return $\mc{A}^{\ep}=\mc{X}$
\EndProcedure
 \end{algorithmic}
\end{algorithm}

Based on Algorithm \ref{alg:mep}, in Example \ref{exa4.3} we compute the core-EP inverse.
\begin{example}\rm\label{exa4.3}
  Let $\mc{A}\in\mbc^{3\times 3\times 3}$ with entries
  \[\mc{A}(:,:,1)=\begin{bmatrix}
   2  &   2   &  -1\\
	    -2 &     0   &   0\\
	    -2  &    2  &   -1\\
  \end{bmatrix},~\mc{A}(:,:,2)=\begin{bmatrix}
  0  &   -2  &   -4\\
	    -8  &    7  &    0\\
	    11 &   -10    &  8
  \end{bmatrix},~\mc{A}(:,:,3)=\begin{bmatrix}
 -1  &   -1 &     3\\
	     1 &     2   &   1\\
	     0  &   -1   &   0
  \end{bmatrix}\]
  and $M=\begin{bmatrix}
 -1    &  0  &   -1\\
     1   &   0 &     0\\
    -1  &   -1  &   -1
\end{bmatrix}$.
Since $\ind(\tilde{\mc{A}}(:,:,1))=1,~\ind(\tilde{\mc{A}}(:,:,2))=2$ and $\ind(\tilde{\mc{A}}(:,:,3))=3$, the tubal index of $\mc{A}$ is equal to $k=3=\max\{1,2,3\}$.
Based on Algorithm \ref{alg:mep}, we calculate $\mc{X}=\mc{A}^{\ep}$, where
$$
\aligned
\mc{X}(:,:,1)&=\begin{bmatrix}
 0.0714  & -0.1429  & -0.2143\\
	   -0.1429  &  0.2857&    0.4286\\
	   -0.2143 &   0.4286 &   0.6429
\end{bmatrix},~\mc{X}(:,:,2)=\begin{bmatrix}
 -0.3158   & 0.2925 &   0.6525\\
	   -0.3417 &   0.0474  &  0.2991\\
	   -0.0620 &  -0.2299&   -0.2230
\end{bmatrix}\\
\mc{X}(:,:,3)&=\begin{bmatrix}
  0.2619 &  -0.1905  & -0.4524\\
	    0.4762  & -0.2857 &  -0.7619\\
	    0.2143  & -0.0952 &  -0.3095
     \end{bmatrix}.
     \endaligned
     $$
\end{example}
We now discuss the comparative analysis of the mean CPU time (MT$^M$) and errors (Error$^M$) generated while calculating the core-EP inverse for different choices of $M$.
The tensor $\mc{A}$ considered in Table \ref{tab:cep-m} is generated randomly such that its index is $1$ and $2$.

\begin{table}[H]
    \begin{center}
       \caption{Computational time for computing $\mc{A}^{\ep}$ for different tensor products}
        \vspace{0.2cm}
        \renewcommand{\arraystretch}{1.1}
        \small
        \begin{tabular}{cccccccc}
    \hline
        Size of $\mc{A}$& $k$  & MT$^{t}$  & MT$^{c}$ & MT$^M$ & Error$^{t}$ & Error$^{c}$ &Error$^M$\\
           \hline
    \multirow{4}{*}{$300\times 300\times 300$}& \multirow{4}{*}{1} &  \multirow{4}{*}{26.26} &\multirow{4}{*}{15.14}&\multirow{4}{*}{15.06}&
    $\mc{E}^{t}_{1^{1}}= 1.22e^{-08}$  & $\mc{E}^{c}_{1^{1}}=2.94e^{-08}$ & $\mc{E}^{M}_{1^{1}}=3.53e^{-07}$
    \\
& & & & & $\mc{E}^{t}_{3}= 1.67e^{-10}$  & $\mc{E}^{c}_{3}=8.18e^{-08}$ & $\mc{E}^{M}_{3}=1.66e^{-07}$
    \\
          &  & & & & $\mc{E}^{t}_{7}= 5.00e^{-09}$  & $\mc{E}^{c}_{7}=2.06e^{-06}$ & $\mc{E}^{M}_{7}=3.93e^{-07}$
      \\
           \hline
    \multirow{4}{*}{$400\times 400\times 400$}& \multirow{4}{*}{1} &  \multirow{4}{*}{58.75} &\multirow{4}{*}{39.32}&\multirow{4}{*}{39.11}&
    $\mc{E}^{t}_{1^{1}}= 3.01e^{-07}$  & $\mc{E}^{c}_{1^{1}}=2.29e^{-08}$ & $\mc{E}^{M}_{1^{1}}=2.62e^{-07}$
    \\
& & & & & $\mc{E}^{t}_{3}= 1.38e^{-09}$  & $\mc{E}^{c}_{3}=1.62e^{-08}$ & $\mc{E}^{M}_{3}=4.35e^{-08}$
    \\
          &  & & & & $\mc{E}^{t}_{7}= 9.64e^{-08}$  & $\mc{E}^{c}_{7}=1.41e^{-06}$ & $\mc{E}^{M}_{7}=2.80e^{-06}$
       \\
           \hline
    \multirow{4}{*}{$300\times 300\times 300$}& \multirow{4}{*}{2} &  \multirow{4}{*}{25.36} &\multirow{4}{*}{18.74}&\multirow{4}{*}{18.69}&
    $\mc{E}^{t}_{1^{2}}= 1.09e^{-07}$  & $\mc{E}^{c}_{1^{2}}=1.23e^{-07}$ & $\mc{E}^{M}_{1^{2}}=3.91e^{-07}$
    \\
& & & & & $\mc{E}^{t}_{3}= 3.90e^{-08}$  & $\mc{E}^{c}_{3}=3.45e^{-08}$ & $\mc{E}^{M}_{3}=5.75e^{-07}$
    \\
          &  & & & & $\mc{E}^{t}_{7}= 1.36e^{-09}$  & $\mc{E}^{c}_{7}=1.54e^{-09}$ & $\mc{E}^{M}_{7}=4.00e^{-09}$
      \\
           \hline
    \multirow{4}{*}{$400\times 400\times 400$}& \multirow{4}{*}{2} &  \multirow{4}{*}{58.78} &\multirow{4}{*}{47.55}&\multirow{4}{*}{46.09}&
    $\mc{E}^{t}_{1^{2}}= 1.09e^{-07}$  & $\mc{E}^{c}_{1^{2}}=1.23e^{-07}$ & $\mc{E}^{M}_{1^{2}}=3.91e^{-07}$
    \\
& & & & & $\mc{E}^{t}_{3}= 3.90e^{-08}$  & $\mc{E}^{c}_{3}=3.45e^{-08}$ & $\mc{E}^{M}_{3}=5.75e^{-07}$
    \\
          &  & & & & $\mc{E}^{t}_{7}= 1.36e^{-09}$  & $\mc{E}^{c}_{7}=1.54e^{-09}$ & $\mc{E}^{M}_{7}=4.00e^{-09}$
       \\
           \hline
    \end{tabular}
    \label{tab:cep-m}
    \end{center}
    \end{table}
\begin{example}\label{exa-ep-com}
Consider  $M=\mathrm{rand}(n)$ and $\mc{A} \in \mbc^{n \times n\times n}$ chosen using rule $\tilde{\mc{A}}(:,:,i)=gallery(\textbf{gearmat},n)$ for $i=\overline{1,n}$.
In this case, the tubal index of $\mc{A}$ is $2$. For tubal index $1$, we consider $\tilde{\mc{A}}(:,:,i)=\textbf{magic}(n)$.
A comparison of the mean CPU time (MT) for the randomly generated invertible matrix $M$, using the tubal index and $\mat$  operation is provided in Table \ref{tab:comp-ep}
\begin{table}[H]
    \begin{center}
    \caption{Comparison of mean CPU time for computing $\mc{A}^{\ep}$}
     \vspace{0.1cm}
         \renewcommand{\arraystretch}{1.1}
    \begin{tabular}{|ccc|ccc|}
    \hline
      \scriptsize{Size of $\mc{A}$}   & $k$ & \scriptsize{MT (Using tubal index)} & \scriptsize{Size of $\mat(\mc{A})$} & \scriptsize{$ind(\mat(\mc{A}))$} &\scriptsize{MT (Using $\mat$ and $\mat^{-1}$)} \\
         \hline
   \scriptsize{$60\times 60\times 60$}  & 1 &0.20 & \scriptsize{$3600\times 3600$}& 1 & 10.70  \\
      \hline
        \scriptsize{$80\times 80\times 80$} & 1 &0.39 &\scriptsize{$6400\times 6400$}& 1 & 47.58   \\
      \hline
         \scriptsize{$100\times 100\times 100$}  &2  &1.25 &\scriptsize{$10000\times 10000$}& 2&  217.78\\
      \hline
        \scriptsize{$120\times 120\times 120$}  &2  & 2.06  & \scriptsize{$14400\times 14400$} &2&  576.88 \\
      \hline
    \end{tabular}
        \label{tab:comp-ep}
        \end{center}
\end{table}
\end{example}

The results in Theorem \ref{ep-prel} are derived using the definition of the core-EP inverse.
\begin{theorem}\label{ep-prel}
Let $\mc{A} \in \mbc^{m\times m\times p}$ be tubal index $k$ with respect to $M\in\mbc^{p\times p}$.
Then $\mc{X}:=\mc{A}^{\ep}$ satisfies
\begin{enumerate}
\item[\rm (i)]$\mc{A}\m\mc{X}=\mc{A}^{n}\m\mc{X}^{n}$, $n\in\mathbb{N}$.
\item[\rm (ii)] $\mc{X}=\mc{A}^{n}\m\mc{X}^{n+1}$, $n\in\mathbb{N}$.
\item[\rm (iii)] $\mc{X}\m\mc{A}\m\mc{X}=\mc{X}$.
\item[\rm (iv)] $\mc{X}^{k+1}\m\mc{A}^{k}=\mc{A}^{\D }$.
\end{enumerate}
\end{theorem}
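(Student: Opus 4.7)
The strategy is to prove the four items in order, with each building on its predecessors together with the three defining relations $\mc{X}\m\mc{A}^{k+1}=\mc{A}^k$, $\mc{A}\m\mc{X}^2=\mc{X}$, and $(\mc{A}\m\mc{X})^*=\mc{A}\m\mc{X}$. In fact, the Hermitian condition will play no role in what follows; everything reduces to careful bookkeeping of powers based on the first two identities.

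For (i) I would induct on $n$. The case $n=1$ is trivial, and for the inductive step, absorbing a single occurrence of $\mc{A}\m\mc{X}^2=\mc{X}$ inside the product yields
\begin{equation*}
\mc{A}^{n+1}\m\mc{X}^{n+1}=\mc{A}^n\m(\mc{A}\m\mc{X}^2)\m\mc{X}^{n-1}=\mc{A}^n\m\mc{X}^n=\mc{A}\m\mc{X}.
\end{equation*}
Part (ii) is symmetric: the base case $n=1$ is the defining equality $\mc{X}=\mc{A}\m\mc{X}^2$, and the step is $\mc{A}^{n+1}\m\mc{X}^{n+2}=\mc{A}^n\m(\mc{A}\m\mc{X}^2)\m\mc{X}^n=\mc{A}^n\m\mc{X}^{n+1}=\mc{X}$. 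Part (iii) then follows from (ii) applied at $n=k$ by a single chain of rewrites:
\begin{equation*}
\mc{X}\m\mc{A}\m\mc{X}=\mc{X}\m\mc{A}\m(\mc{A}^k\m\mc{X}^{k+1})=(\mc{X}\m\mc{A}^{k+1})\m\mc{X}^{k+1}=\mc{A}^k\m\mc{X}^{k+1}=\mc{X}.
\end{equation*}

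For (iv) I would set $\mc{Y}:=\mc{X}^{k+1}\m\mc{A}^k$ and verify the three Drazin axioms of Definition \ref{drdef}, invoking uniqueness of the Drazin inverse. Iterating $\mc{A}\m\mc{X}^2=\mc{X}$ gives the collapse $\mc{A}\m\mc{X}^{j+1}=\mc{X}^j$ for $j\geq 1$, so $\mc{A}\m\mc{Y}=\mc{X}^k\m\mc{A}^k$; on the other hand $\mc{Y}\m\mc{A}=\mc{X}^k\m(\mc{X}\m\mc{A}^{k+1})=\mc{X}^k\m\mc{A}^k$, proving commutativity. The identity $\mc{Y}\m\mc{A}^{k+1}=\mc{A}^k$ follows from the auxiliary claim $\mc{X}^j\m\mc{A}^{k+j}=\mc{A}^k$, which I establish by induction from $\mc{X}\m\mc{A}^{k+1}=\mc{A}^k$ and apply at $j=k+1$. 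Finally,
\begin{equation*}
\mc{Y}\m\mc{A}\m\mc{Y}=\mc{X}^{k+1}\m\mc{A}^{k+1}\m\mc{X}^{k+1}\m\mc{A}^k=\mc{X}^k\m\mc{A}^k\m\mc{X}^{k+1}\m\mc{A}^k=\mc{X}^k\m\mc{X}\m\mc{A}^k=\mc{Y},
\end{equation*}
using $\mc{X}^{k+1}\m\mc{A}^{k+1}=\mc{X}^k\m\mc{A}^k$ together with part (ii) at $n=k$ (which gives $\mc{A}^k\m\mc{X}^{k+1}=\mc{X}$).

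\textbf{Main obstacle.} The only real difficulty is the exponent bookkeeping in (iv): since $k$ is only bounded by the tubal index and need not equal one, the shifts must be performed carefully, recognising at each step whether to apply $\mc{A}\m\mc{X}^{j+1}=\mc{X}^j$, the definitional $\mc{X}\m\mc{A}^{k+1}=\mc{A}^k$ (or its $j$-fold iterate $\mc{X}^j\m\mc{A}^{k+j}=\mc{A}^k$), or the consequence $\mc{A}^k\m\mc{X}^{k+1}=\mc{X}$ of (ii). Once these identities are lined up, all three Drazin axioms collapse algebraically and uniqueness closes the argument.
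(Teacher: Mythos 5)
Your proposal is correct, and it follows the route the paper intends: the paper states Theorem \ref{ep-prel} without a written proof, saying only that the results "are derived using the definition of the core-EP inverse," and your direct verification from the two algebraic defining relations (induction for (i)--(ii), the power-shifting identities for (iii), and checking the three Drazin equations for $\mc{X}^{k+1}\m\mc{A}^k$ followed by uniqueness of $\mc{A}^{\D}$ for (iv)) is exactly that derivation, with the exponent bookkeeping done correctly. Your observation that the Hermitian condition $(\mc{A}\m\mc{X})^*=\mc{A}\m\mc{X}$ is never needed is also accurate.
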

The core-EP inverse can be expressed in terms of the MP and Drazin inverses, as given in Theorem \ref{ThmTCEPD}.
\begin{theorem}\label{ThmTCEPD}
 Let $M\in\mbc^{p\times p}$ and $\mc{A} \in \mbc^{m\times m\times p}$ with  tubal index $k$. Then for $l\geq k$, \[\mc{A}^{\ep}=\mc{A}^{\D }\m\mc{A}^{l}\m(\mc{A}^{l})^{\dagger}.\]
\end{theorem}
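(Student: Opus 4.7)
My plan is to verify directly that $\mc{X}:=\mc{A}^{\D }\m\mc{A}^{l}\m(\mc{A}^{l})^{\dagger}$ satisfies the three defining identities of the core-EP inverse from Definition \ref{cepdef}, namely $\mc{X}\m\mc{A}^{k+1}=\mc{A}^k$, $\mc{A}\m\mc{X}^2=\mc{X}$, and $(\mc{A}\m\mc{X})^*=\mc{A}\m\mc{X}$; uniqueness then forces $\mc{X}=\mc{A}^{\ep}$. An alternative would be to pull everything through $\mat$, invoking Propositions \ref{propdr} and \ref{propep} together with the known matrix identity block by block, but the direct tensor approach is more informative and reuses only results already established in this section.

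The workhorse identities I will use are: commutativity of $\mc{A}$ with $\mc{A}^{\D }$ (Definition \ref{drdef}); the consequence $\mc{A}\m\mc{A}^{\D }\m\mc{A}^{l}=\mc{A}^{l}$ for every $l\geq k$, obtained by iterating $\mc{A}^{\D }\m\mc{A}^{k+1}=\mc{A}^{k}$; the Moore-Penrose identities $\mc{A}^{l}\m(\mc{A}^{l})^{\dagger}\m\mc{A}^{l}=\mc{A}^{l}$ together with the Hermitian symmetry of $\mc{A}^{l}\m(\mc{A}^{l})^{\dagger}$; and Proposition \ref{proprn9}, which lets me convert any equality of the form $\mc{B}=\mc{A}^{l}\m\mc{Z}$ into the absorbing identity $\mc{A}^{l}\m(\mc{A}^{l})^{\dagger}\m\mc{B}=\mc{B}$.

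With these tools, the Hermitian condition collapses at once, because $\mc{A}\m\mc{X}=\mc{A}\m\mc{A}^{\D }\m\mc{A}^{l}\m(\mc{A}^{l})^{\dagger}=\mc{A}^{l}\m(\mc{A}^{l})^{\dagger}$ by the identity above, and the right-hand side is self-adjoint by the MP axioms. For $\mc{A}\m\mc{X}^2$, I expand and rewrite the middle factor $\mc{A}^{\D }\m\mc{A}^{l}$ as $\mc{A}^{l}\m\mc{A}^{\D }$ via commutativity; since this lies in $\rg(\mc{A}^{l})$, the projector $\mc{A}^{l}\m(\mc{A}^{l})^{\dagger}$ acts as identity and everything collapses back to $\mc{X}$.

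The delicate step, and the one I expect to be the main obstacle, is the index condition $\mc{X}\m\mc{A}^{k+1}=\mc{A}^{k}$. After expanding, I need to absorb the projector $\mc{A}^{l}\m(\mc{A}^{l})^{\dagger}$ against $\mc{A}^{k+1}$, which requires showing $\mc{A}^{k+1}\in\rg(\mc{A}^{l})$ for every $l\geq k$. This splits into two cases: when $l=k$, the factorization $\mc{A}^{k+1}=\mc{A}^{k}\m\mc{A}$ suffices; when $l\geq k+1$, I iterate the consequence $\mc{A}^{k+1}=\mc{A}^{\D }\m\mc{A}^{k+2}$ (itself obtained by multiplying $\mc{A}^{\D }\m\mc{A}^{k+1}=\mc{A}^{k}$ by $\mc{A}$ and commuting $\mc{A}^{\D }$) to produce $\mc{A}^{k+1}=\mc{A}^{l}\m(\mc{A}^{\D })^{l-k-1}$. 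Either way, $\mc{A}^{k+1}\in\rg(\mc{A}^{l})$ by Proposition \ref{proprn9}, the projector disappears, and the identity reduces to $\mc{A}^{\D }\m\mc{A}^{k+1}=\mc{A}^{k}$, which is the defining Drazin condition. Uniqueness of the core-EP inverse completes the argument.
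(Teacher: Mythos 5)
Your proposal is correct and follows essentially the same route as the paper: both verify directly that $\mc{X}=\mc{A}^{\D}\m\mc{A}^{l}\m(\mc{A}^{l})^{\dagger}$ satisfies the three defining equations of Definition \ref{cepdef}, using commutativity of $\mc{A}$ with $\mc{A}^{\D}$, the identity $\mc{A}\m\mc{A}^{\D}\m\mc{A}^{l}=\mc{A}^{l}$, and absorption under the Hermitian projector $\mc{A}^{l}\m(\mc{A}^{l})^{\dagger}$. The only difference is cosmetic: you check the rank condition at the exponent $k$ itself, via $\mc{A}^{k+1}\in\rg(\mc{A}^{l})$, while the paper checks $\mc{X}\m\mc{A}^{l+1}=\mc{A}^{l}$ directly, which amounts to the equivalent formulation of the same condition with $l\geq k$.
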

\begin{proof}
   Let $\mc{X}=\mc{A}^{\D }\m\mc{A}^{l}\m(\mc{A}^{l})^{\dagger}$. Then
   \[\mc{X}\m\mc{A}^{l+1}= \mc{A}^{\D }\m\mc{A}^{l}\m(\mc{A}^{l})^{\dagger}\m\mc{A}^{l+1}=\mc{A}^\D \m\mc{A}^{l+1}=\mc{A}^l,\]
   \begin{eqnarray*}
   \mc{A}\m\mc{X}^2&=& \mc{A}\m\mc{A}^{\D }\m\mc{A}^{l}\m(\mc{A}^{l})^{\dagger}\m\mc{X}=\mc{A}^{l}\m(\mc{A}^{l})^{\dagger}\m\mc{A}^{\D }\m\mc{A}^{l}\m(\mc{A}^{l})^{\dagger}\\
   &=&\mc{A}^{\D }\m\mc{A}^{l}\m(\mc{A}^{l})^{\dagger}=\mc{X},
   \end{eqnarray*}
   and $(\mc{A}\m\mc{X})^*=(\mc{A}^{l}\m(\mc{A}^{l})^{\dagger})^*=\mc{A}^{l}\m(\mc{A}^{l})^{\dagger}=\mc{A}\m\mc{X}$. Thus, the proof is complete.
   \end{proof}

The additive properties of the core-EP inverse are discussed in Theorem \ref{ThmTAdd}.

\begin{theorem}\label{ThmTAdd}
Let $M\in\mbc^{p\times p}$ and $\mc{A}, ~\mc{B} \in \mbc^{m\times m\times p}$ with the respective tubal index $k_1$ and $k_2$. If $\mc{A}\m\mc{B}=\mc{B}\m\mc{A}=\mc{O}=\mc{A}^*\m\mc{B}$, then $(\mc{A}\pm\mc{B})^{\ep}=\mc{A}^{\ep}\pm\mc{B}^{\ep}$.
\end{theorem}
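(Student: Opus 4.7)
The plan is to verify that the candidate $\mc{Y} := \mc{A}^{\ep}+\mc{B}^{\ep}$ satisfies the three defining equations of Definition \ref{cepdef} for $\mc{A}+\mc{B}$, and then to derive the minus case by applying the plus case to $\mc{B}':=-\mc{B}$. The key preliminary observation I would first establish is a set of cross-annihilation identities:
\begin{equation*}
\mc{A}\m\mc{B}^{\ep}=\mc{O},\quad \mc{B}\m\mc{A}^{\ep}=\mc{O},\quad \mc{A}^{\ep}\m\mc{B}=\mc{O},\quad \mc{B}^{\ep}\m\mc{A}=\mc{O}.
\end{equation*}
The first two follow easily from $\mc{A}\m\mc{B}=\mc{B}\m\mc{A}=\mc{O}$ together with the relation $\mc{X}=\mc{A}\m\mc{X}^2$ (resp.\ for $\mc{B}$): for instance, $\mc{A}\m\mc{B}^{\ep}=\mc{A}\m\mc{B}\m(\mc{B}^{\ep})^2=\mc{O}$. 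Post-multiplying by $\mc{A}^{\ep}$ or $\mc{B}^{\ep}$ once more gives $\mc{A}^{\ep}\m\mc{B}^{\ep}=\mc{O}=\mc{B}^{\ep}\m\mc{A}^{\ep}$.

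The harder step, and the main obstacle, is to prove $\mc{A}^{\ep}\m\mc{B}=\mc{O}$ (and symmetrically $\mc{B}^{\ep}\m\mc{A}=\mc{O}$), since $\mc{B}$ sits on the wrong side to use $\mc{A}\m\mc{B}=\mc{O}$ directly. This is where the hypothesis $\mc{A}^{*}\m\mc{B}=\mc{O}$ (which also gives $\mc{B}^{*}\m\mc{A}=\mc{O}$) enters via the Hermitian condition $(\mc{A}\m\mc{A}^{\ep})^{*}=\mc{A}\m\mc{A}^{\ep}$, i.e.\ $(\mc{A}^{\ep})^{*}\m\mc{A}^{*}=\mc{A}\m\mc{A}^{\ep}$. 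Using Theorem \ref{ep-prel}(iii),
\begin{equation*}
\mc{A}^{\ep}\m\mc{B}=\mc{A}^{\ep}\m\mc{A}\m\mc{A}^{\ep}\m\mc{B}=\mc{A}^{\ep}\m(\mc{A}^{\ep})^{*}\m\mc{A}^{*}\m\mc{B}=\mc{O},
\end{equation*}
and the same argument with the roles swapped yields $\mc{B}^{\ep}\m\mc{A}=\mc{O}$.

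Once these identities are in hand, the rest is a bookkeeping check with $k:=\max\{k_1,k_2\}$. Commutativity $\mc{A}\m\mc{B}=\mc{B}\m\mc{A}=\mc{O}$ implies $(\mc{A}+\mc{B})^{n}=\mc{A}^{n}+\mc{B}^{n}$ for every $n\geq 1$. Expanding $\mc{Y}\m(\mc{A}+\mc{B})^{k+1}$, the cross terms $\mc{A}^{\ep}\m\mc{B}^{k+1}$ and $\mc{B}^{\ep}\m\mc{A}^{k+1}$ vanish by the annihilation identities (push the cross factor to the innermost position), leaving $\mc{A}^{\ep}\m\mc{A}^{k+1}+\mc{B}^{\ep}\m\mc{B}^{k+1}=\mc{A}^{k}+\mc{B}^{k}=(\mc{A}+\mc{B})^{k}$. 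For the second identity, $\mc{Y}^{2}=(\mc{A}^{\ep})^{2}+(\mc{B}^{\ep})^{2}$ by the annihilation identities, so
\begin{equation*}
(\mc{A}+\mc{B})\m\mc{Y}^{2}=\mc{A}\m(\mc{A}^{\ep})^{2}+\mc{B}\m(\mc{B}^{\ep})^{2}=\mc{A}^{\ep}+\mc{B}^{\ep}=\mc{Y}.
\end{equation*}
For the Hermitian condition, $(\mc{A}+\mc{B})\m\mc{Y}=\mc{A}\m\mc{A}^{\ep}+\mc{B}\m\mc{B}^{\ep}$ is a sum of two Hermitian tensors and thus Hermitian. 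Uniqueness of the core-EP inverse (which follows from Proposition \ref{propep} and the matrix case) then gives $(\mc{A}+\mc{B})^{\ep}=\mc{A}^{\ep}+\mc{B}^{\ep}$.

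Finally, for the minus case, I would verify directly from Definition \ref{cepdef} that $(-\mc{B})^{\ep}=-\mc{B}^{\ep}$ (each of the three defining equations is invariant under this sign flip because $k_2+1$ and $k_2$ differ in parity but the scalar $-1$ factors appear symmetrically). The hypotheses $\mc{A}\m(-\mc{B})=(-\mc{B})\m\mc{A}=\mc{O}=\mc{A}^{*}\m(-\mc{B})$ are preserved, so the plus case applied to $\mc{A}$ and $-\mc{B}$ yields $(\mc{A}-\mc{B})^{\ep}=\mc{A}^{\ep}-\mc{B}^{\ep}$, completing the proof.
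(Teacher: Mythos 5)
Your proposal is correct and follows essentially the same route as the paper: derive the cross-annihilation identities (including the key steps $\mc{A}^{\ep}\m\mc{B}=\mc{A}^{\ep}\m(\mc{A}^{\ep})^{*}\m\mc{A}^{*}\m\mc{B}=\mc{O}$ and its counterpart via the Hermitian property of $\mc{A}\m\mc{A}^{\ep}$ and the hypothesis $\mc{A}^{*}\m\mc{B}=\mc{O}$), then verify the three defining equations for $\mc{A}^{\ep}+\mc{B}^{\ep}$ with $k=\max\{k_1,k_2\}$. Your handling of the minus case via $(-\mc{B})^{\ep}=-\mc{B}^{\ep}$ is just a cleaner packaging of the paper's ``the other part can be verified in the same way.''
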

\begin{proof}
 Let $\mc{A}\m\mc{B}=\mc{B}\m\mc{A}=\mc{O}=\mc{A}^*\m\mc{B}$.
 Then it can be obtained
\[\mc{A}\m\mc{B}^{\ep}=\mc{A}\m\mc{B}\m(\mc{B}^{\ep})^{2}=\mc{O},\]
 \[\mc{B}\m\mc{A}^{\ep}=\mc{B}\m\mc{A}\m(\mc{A}^{\ep})^{2}=\mc{O},\]
 \[\mc{B}^{\ep}\m\mc{A}=\mc{B}^{\ep}\m(\mc{B}^{\ep})^{\star}\m\mc{B}^{\star}\m\mc{A}=\mc{O},\]
\[\mc{A}^{\ep}\m\mc{B}=\mc{A}^{\ep}\m\mc{A}\m\mc{A}^{\ep}\m\mc{B}=\mc{A}^{\ep}\m(\mc{A}^{\ep})^*\m\mc{A}^*\m\mc{B}=\mc{O},\]
 \[\mc{A}^{\ep}\m\mc{B}^{\ep}=\mc{A}^{\ep}\m(\mc{A}^{\ep})^{\star}\m\mc{A}^{\star}\m\mc{B}\m(\mc{B}^{\ep})^{2}=\mc{O},\]
\[\mc{B}^{\ep}\m\mc{A}^{\ep}=\mc{B}^{\ep}\m(\mc{B}^{\ep})^*\m\mc{B}^*\m\mc{A}\m(\mc{A}^{\ep})^{2}=\mc{O}.\]
Assuming $k=\max\{k_1,k_2\}$, it can be obtained by Theorem \ref{ep-prel} (ii)
\[\mc{A}^l=\mc{A}^{\ep}\m\mc{A}^{k+1}=\mc{A}^k\m(\mc{A}^{\ep})^k\m\mc{A}^k,~\mc{B}^k=\mc{B}^k\m(\mc{B}^{\ep})^k\m\mc{B}^k.\]
 Now, using the above properties, one obtains
\[(\mc{A}^{\ep}+\mc{B}^{\ep})\m(\mc{A}+\mc{B})^{k+1}=(\mc{A}^{\ep}+\mc{B}^{\ep})\m(\mc{A}^{k+1}+ \mc{B}^{k+1})=\mc{A}^k+ \mc{B}^k=(\mc{A}+\mc{B})^k,\]
\[(\mc{A}+\mc{B})\m(\mc{A}^{\ep}+\mc{B}^{\ep})^2=(\mc{A}+\mc{B})\m((\mc{A}^{\ep})^2+(\mc{B}^{\ep})^2)=\mc{A}^{\ep}+\mc{B}^{\ep},\]
and
\begin{eqnarray*}
((\mc{A}+\mc{B})\m(\mc{A}^{\ep}\m\mc{B}^{\ep}))^*&=&(\mc{A}\m\mc{A}^{\ep})^*+(\mc{B}\m\mc{B}^{\ep})^*=\mc{A}\m\mc{A}^{\ep}+\mc{B}\m\mc{B}^{\ep}\\
&=&(\mc{A}+\mc{B})\m(\mc{A}^{\ep}\m\mc{B}^{\ep}).
 \end{eqnarray*}
 Other part can be verified in the same way.
\end{proof}

\section {Composite generalized inverses}\label{SecCompGIT}

\begin{definition}
Let $\mc{A} \in \mbc^{m\times m\times p}$ be with the tubal index $k$ with respect to $M\in\mbc^{p\times p}$. The composite generalized inverses are defined as per the representation given in the  Table \ref{tab:compodef}.
\begin{table}[H]
    \begin{center}
        \caption{Composite generalized inverse of $\mc{A}$}
        \vspace{0.2cm}
         \renewcommand{\arraystretch}{1.1}
    \begin{tabular}{|c|c|c|}
    \hline
     Inverse   & Notation & Definition\\
         \hline
       DMP inverse  & $\mc{A}^{\D ,\dagger}$ &$\mc{A}^{\D ,\dagger}=\mc{A}^{\D }\m\mc{A}\m\mc{A}^{\dagger}$\\
         \hline
           MPD inverse  & $\mc{A}^{\dagger,\D}$ &${ \mc{A}^{\dagger,\D}}=\mc{A}^{\dagger}\m\mc{A}\m\mc{A}^\D $\\
                 \hline
        CMP inverse  & $\mc{A}^{c,\dagger}$ &$\mc{A}^{c,\dagger}=\mc{A}^{\dagger}\m\mc{A}\m\mc{A}^{\D }\m\mc{A}\m\mc{A}^{\dagger}=\mc{A}^{\dagger}\m\mc{A}\m\mc{A}^{\D ,\dagger}$\\
            \hline
    \end{tabular}
     \label{tab:compodef}
     \end{center}
\end{table}
\end{definition}

\begin{theorem}
Let $M\in\mbc^{p\times p}$ and $\mc{A} \in \mbc^{m\times m\times p}$ be with tubal index $k$.
Then $\mc{A}^{\D ,\dagger}$ is a unique solution to subsequent tensor equations
\begin{equation}\label{dmpeq}
   \mc{Y}\m\mc{A}\m\mc{Y}=\mc{Y},\ ~\mc{Y}\m\mc{A}=\mc{A}^{\D }\m\mc{A},\ ~\mc{A}\m\mc{Y} = \mc{A}\m\mc{A}^\D \m\mc{A}\m \mc{A}^{\dagger}.
\end{equation}
\end{theorem}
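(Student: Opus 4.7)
The plan is to prove existence and uniqueness separately, with both parts reducing to manipulations built on three basic facts: the Drazin identities $\mc{A}^{\D}\m\mc{A}\m\mc{A}^{\D}=\mc{A}^{\D}$ and $\mc{A}^{\D}\m\mc{A}=\mc{A}\m\mc{A}^{\D}$, and the Moore--Penrose identity $\mc{A}\m\mc{A}^{\dagger}\m\mc{A}=\mc{A}$. All three hold in the $M$-product setting because both generalized inverses have already been introduced and shown to obey their defining relations in earlier sections.

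For existence I take the candidate $\mc{Y}:=\mc{A}^{\D,\dagger}=\mc{A}^{\D}\m\mc{A}\m\mc{A}^{\dagger}$. The third equation $\mc{A}\m\mc{Y}=\mc{A}\m\mc{A}^{\D}\m\mc{A}\m\mc{A}^{\dagger}$ is immediate from the definition. For the second equation I write $\mc{Y}\m\mc{A}=\mc{A}^{\D}\m(\mc{A}\m\mc{A}^{\dagger}\m\mc{A})=\mc{A}^{\D}\m\mc{A}$, which collapses by the MP identity. For the first equation I compute
\[
\mc{Y}\m\mc{A}\m\mc{Y}=\mc{A}^{\D}\m(\mc{A}\m\mc{A}^{\dagger}\m\mc{A})\m\mc{A}^{\D}\m\mc{A}\m\mc{A}^{\dagger}=\mc{A}^{\D}\m\mc{A}\m\mc{A}^{\D}\m\mc{A}\m\mc{A}^{\dagger}=\mc{A}^{\D}\m\mc{A}\m\mc{A}^{\dagger}=\mc{Y},
\]
using the MP identity once and then $\mc{A}^{\D}\m\mc{A}\m\mc{A}^{\D}=\mc{A}^{\D}$.

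For uniqueness I suppose $\mc{Y}$ is any solution of \eqref{dmpeq}. The first equation lets me replace the outer $\mc{Y}$: $\mc{Y}=\mc{Y}\m\mc{A}\m\mc{Y}=(\mc{Y}\m\mc{A})\m\mc{Y}=\mc{A}^{\D}\m\mc{A}\m\mc{Y}=\mc{A}^{\D}\m(\mc{A}\m\mc{Y})$ by the second equation. Substituting the third equation for $\mc{A}\m\mc{Y}$ yields
\[
\mc{Y}=\mc{A}^{\D}\m\mc{A}\m\mc{A}^{\D}\m\mc{A}\m\mc{A}^{\dagger}=\mc{A}^{\D}\m\mc{A}\m\mc{A}^{\dagger}=\mc{A}^{\D,\dagger},
\]
again via $\mc{A}^{\D}\m\mc{A}\m\mc{A}^{\D}=\mc{A}^{\D}$. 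This forces any solution to coincide with $\mc{A}^{\D,\dagger}$.

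There is no real obstacle here; the only subtlety is noticing that uniqueness must use all three equations together rather than two-sided cancellation on $\mc{A}$ (which is unavailable since $\mc{A}$ is singular and, for tubal index $k>1$, one does not have $\mc{A}\m\mc{A}^{\D}\m\mc{A}=\mc{A}$). The trick is to collapse $\mc{Y}\m\mc{A}\m\mc{Y}$ by absorbing $\mc{Y}\m\mc{A}$ on the left, turning $\mc{Y}$ into an expression involving $\mc{A}\m\mc{Y}$, which then gets fixed by the third equation. The whole argument is a short chain of substitutions and therefore a natural candidate for formatting as a single compact proof.
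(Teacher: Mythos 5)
Your proposal is correct and follows essentially the same route as the paper: existence by direct verification of the three equations for $\mc{A}^{\D,\dagger}$ (which the paper merely states can be verified), and uniqueness by the same chain of substitutions—use the first equation, replace $\mc{Y}\m\mc{A}$ via the second, then $\mc{A}\m\mc{Y}$ via the third. The only cosmetic difference is that the paper equates two hypothetical solutions $\mc{Y}$ and $\mc{Z}$, while you collapse any solution directly to $\mc{A}^{\D}\m\mc{A}\m\mc{A}^{\dagger}$ using $\mc{A}^{\D}\m\mc{A}\m\mc{A}^{\D}=\mc{A}^{\D}$; both are valid and equally short.
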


\begin{proof}
It can be verified that $\mc{Y}:=\mc{A}^{\D ,\dagger}$ satisfies the equation \eqref{dmpeq}.
Next, we will verify the uniqueness. Suppose two solutions exist, denoted by $\mc{Y}$ and $\mc{Z}$.
Now
 \begin{eqnarray*}
\mc{Y} &=& \mc{Y}\m\mc{A}\m\mc{Y}=\mc{A}^{\D }\m\mc{A}\m\mc{Y}=\mc{Z}\m\mc{A}\m\mc{Y}\\
&=&\mc{Z}\m\mc{A}\m\mc{A}^\D \m\mc{A}\m \mc{A}^{\dagger}\\
&=&\mc{Z}\m\mc{A}\m\mc{Z}=\mc{Z}
   \end{eqnarray*}
   confirms equality between $\mc{Y}$ and $\mc{Z}$.
 \end{proof}

The following results for other composite generalized inverses are verified similarly.
\begin{theorem}
Let $M\in\mbc^{p\times p}$ and $\mc{A} \in \mbc^{m\times m\times p}$ with tubal index $k$.
Under these assumptions, $\mc{A}^{\dagger,\D}$ is a unique solution to tensor equations
\[ \mc{Y}\m\mc{A}\m\mc{Y}=\mc{Y},~\mc{Y}\m\mc{A}=\mc{A}^{\dagger}\m\mc{A}\m\mc{A}^\D \m \mc{A},~\mc{A}\m\mc{Y} = \mc{A}\m\mc{A}^\D .\]
\end{theorem}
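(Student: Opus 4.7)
The plan is to mirror the proof of the preceding DMP theorem essentially verbatim, replacing $\mc{A}^{\D,\dagger}$ by $\mc{A}^{\dagger,\D}=\mc{A}^{\dagger}\m\mc{A}\m\mc{A}^{\D}$ and adjusting the three equations accordingly. The whole argument rests on two defining identities: $\mc{A}\m\mc{A}^{\dagger}\m\mc{A}=\mc{A}$ from the definition of the MP inverse, and $\mc{A}^{\D}\m\mc{A}\m\mc{A}^{\D}=\mc{A}^{\D}$ from Definition \ref{drdef}. First I would check that the candidate $\mc{Y}:=\mc{A}^{\dagger,\D}$ satisfies all three equations, and then show uniqueness by reducing an arbitrary solution to the same closed-form expression.

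For existence, the second equation $\mc{Y}\m\mc{A}=\mc{A}^{\dagger}\m\mc{A}\m\mc{A}^{\D}\m\mc{A}$ is immediate from the definition of $\mc{Y}$. The third equation follows from $\mc{A}\m\mc{Y}=\mc{A}\m\mc{A}^{\dagger}\m\mc{A}\m\mc{A}^{\D}=\mc{A}\m\mc{A}^{\D}$, using the MP identity on the leftmost three factors. For the first equation, I would expand $\mc{Y}\m\mc{A}\m\mc{Y}=\mc{A}^{\dagger}\m\mc{A}\m\mc{A}^{\D}\m\mc{A}\m\mc{A}^{\dagger}\m\mc{A}\m\mc{A}^{\D}$ and regroup the middle block as $\mc{A}^{\D}\m(\mc{A}\m\mc{A}^{\dagger}\m\mc{A})\m\mc{A}^{\D}=\mc{A}^{\D}\m\mc{A}\m\mc{A}^{\D}=\mc{A}^{\D}$, so that the product telescopes back to $\mc{A}^{\dagger}\m\mc{A}\m\mc{A}^{\D}=\mc{Y}$.

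For uniqueness, I would let $\mc{Y}$ be any tensor satisfying the three stated equations and compute $\mc{Y}=\mc{Y}\m\mc{A}\m\mc{Y}=(\mc{Y}\m\mc{A})\m\mc{Y}=\mc{A}^{\dagger}\m\mc{A}\m\mc{A}^{\D}\m\mc{A}\m\mc{Y}=\mc{A}^{\dagger}\m\mc{A}\m\mc{A}^{\D}\m\mc{A}\m\mc{A}^{\D}=\mc{A}^{\dagger}\m\mc{A}\m\mc{A}^{\D}$, substituting the second equation after the first regrouping, then the third equation to rewrite $\mc{A}\m\mc{Y}$, and finally collapsing via $\mc{A}^{\D}\m\mc{A}\m\mc{A}^{\D}=\mc{A}^{\D}$. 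Since the right-hand side is independent of $\mc{Y}$, any two solutions coincide. I do not anticipate any substantive obstacle, and the author signals as much with the remark that the result is verified similarly to the DMP case; the only care required is bookkeeping the order of factors in the noncommutative $M$-product when regrouping in the verification of the first equation.
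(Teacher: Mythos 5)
Your proposal is correct and follows essentially the same route the paper intends: it mirrors the DMP proof by verifying that $\mc{A}^{\dagger,\D}=\mc{A}^{\dagger}\m\mc{A}\m\mc{A}^{\D}$ satisfies the three equations via $\mc{A}\m\mc{A}^{\dagger}\m\mc{A}=\mc{A}$ and $\mc{A}^{\D}\m\mc{A}\m\mc{A}^{\D}=\mc{A}^{\D}$, and then uses the equations themselves to collapse any solution to this closed form, which is exactly the uniqueness mechanism in the paper's DMP argument (stated there with two solutions $\mc{Y}$ and $\mc{Z}$, a cosmetic difference only).
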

\begin{theorem}
 Let $M\in\mbc^{p\times p}$ and $\mc{A} \in \mbc^{m\times m\times p}$ be with tubal index $k$.
 Then $\mc{A}^{c,\dagger}$ is a unique solution to
\[ \mc{Y}\m\mc{A}\m\mc{Y}=\mc{Y},\ ~\mc{Y}\m\mc{A}=\mc{A}^{\dagger,\D}\m\mc{A},\ ~\mc{A}\m\mc{Y} = \mc{A}\m\mc{A}^{\D ,\dagger}.\]
\end{theorem}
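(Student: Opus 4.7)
The plan is to follow exactly the template used for the DMP case: first, substitute $\mc{Y}:=\mc{A}^{c,\dagger}=\mc{A}^{\dagger}\m\mc{A}\m\mc{A}^{\D}\m\mc{A}\m\mc{A}^{\dagger}$ into each of the three defining equations and verify them, then use the standard ``sandwich'' argument for uniqueness.

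For the existence part I would check the three equations in order. For the mixed equation $\mc{A}\m\mc{Y}=\mc{A}\m\mc{A}^{\D,\dagger}$, factor the product as $\mc{A}\m\mc{A}^{c,\dagger}=(\mc{A}\m\mc{A}^{\dagger}\m\mc{A})\m\mc{A}^{\D}\m\mc{A}\m\mc{A}^{\dagger}$ and collapse the inner triple using $\mc{A}\m\mc{A}^{\dagger}\m\mc{A}=\mc{A}$ to obtain $\mc{A}\m\mc{A}^{\D}\m\mc{A}\m\mc{A}^{\dagger}=\mc{A}\m\mc{A}^{\D,\dagger}$. For the other mixed equation $\mc{Y}\m\mc{A}=\mc{A}^{\dagger,\D}\m\mc{A}$, write $\mc{A}^{c,\dagger}\m\mc{A}=\mc{A}^{\dagger}\m\mc{A}\m\mc{A}^{\D}\m(\mc{A}\m\mc{A}^{\dagger}\m\mc{A})$ and again collapse via $\mc{A}\m\mc{A}^{\dagger}\m\mc{A}=\mc{A}$ to get $\mc{A}^{\dagger}\m\mc{A}\m\mc{A}^{\D}\m\mc{A}=\mc{A}^{\dagger,\D}\m\mc{A}$. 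Finally, the absorption identity $\mc{Y}\m\mc{A}\m\mc{Y}=\mc{Y}$ follows by chaining these two reductions together with $\mc{A}^{\D}\m\mc{A}\m\mc{A}^{\D}=\mc{A}^{\D}$ and $\mc{A}^{\dagger}\m\mc{A}\m\mc{A}^{\dagger}=\mc{A}^{\dagger}$.

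For uniqueness, suppose $\mc{Y}$ and $\mc{Z}$ both satisfy the system. I would run the standard four-step chain
\[
\mc{Y}=\mc{Y}\m\mc{A}\m\mc{Y}=\mc{A}^{\dagger,\D}\m\mc{A}\m\mc{Y}=\mc{Z}\m\mc{A}\m\mc{Y}=\mc{Z}\m\mc{A}\m\mc{A}^{\D,\dagger}=\mc{Z}\m\mc{A}\m\mc{Z}=\mc{Z},
\]
where the second equality uses $\mc{Y}\m\mc{A}=\mc{A}^{\dagger,\D}\m\mc{A}$, the third reverses this for $\mc{Z}$, the fourth substitutes $\mc{A}\m\mc{Y}=\mc{A}\m\mc{A}^{\D,\dagger}$, and the fifth reverses it for $\mc{Z}$.

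The step that most needs care is the verification of the third equation, because $\mc{Y}\m\mc{A}\m\mc{Y}$ contains a long alternating product of $\mc{A}^{\dagger}$, $\mc{A}$, and $\mc{A}^{\D}$; the right order of collapsing $\mc{A}\m\mc{A}^{\dagger}\m\mc{A}=\mc{A}$ and $\mc{A}^{\D}\m\mc{A}\m\mc{A}^{\D}=\mc{A}^{\D}$ matters. The identities themselves are immediate under the $M$-product since they hold slice-by-slice for $\tilde{\mc{A}}(:,:,i)$ via Proposition \ref{propdr} and the analogous $\mat$-representation of $\mc{A}^{\dagger}$, so there is no genuine structural obstacle; essentially the proof is a line-by-line transcription of the matrix CMP characterization into the $M$-product calculus, with no new phenomenon beyond those already exploited in the DMP and MPD cases.
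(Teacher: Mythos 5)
Your proposal is correct and follows exactly the route the paper intends: the paper states this result without an explicit proof, remarking only that it is "verified similarly" to the DMP case, and your argument is precisely that template — direct substitution of $\mc{A}^{c,\dagger}=\mc{A}^{\dagger}\m\mc{A}\m\mc{A}^{\D}\m\mc{A}\m\mc{A}^{\dagger}$ using $\mc{A}\m\mc{A}^{\dagger}\m\mc{A}=\mc{A}$, $\mc{A}^{\dagger}\m\mc{A}\m\mc{A}^{\dagger}=\mc{A}^{\dagger}$, $\mc{A}^{\D}\m\mc{A}\m\mc{A}^{\D}=\mc{A}^{\D}$, followed by the same sandwich chain for uniqueness used in the DMP proof. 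All individual steps check out, so there is nothing to add.
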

\begin{example}\rm
  Choose $\mc{A}$ and $M$ as in Example \ref{exa4.3}. We can evaluate
   $\mc{X}=\mc{A}^{\D ,\dagger}$, $\mc{Y}=\mc{A}^{\dagger,\D}$ and $\mc{Z}=\mc{A}^{c,\dagger}$, where
   \[\mc{X}(:,:,1)=\begin{bmatrix}
       0.8333 &  -0.3333 &   0.1667\\
	   -1.6667  &  0.6667 &  -0.3333\\
	   -2.5  &  1 &  -0.5
   \end{bmatrix},~\mc{X}(:,:,2)=\begin{bmatrix}
  -0.3158  &  0.2925  &  0.6525\\
	   -0.3417  &  0.0474   & 0.2991\\
	   -0.0620   &-0.2299  & -0.2230
   \end{bmatrix},\]
    \[\mc{X}(:,:,3)=\begin{bmatrix}
      -0.5 & 0 &  -0.8333\\
	    2  &-0.6667  &0\\
	    2.5  & -0.6667   & 0.8333
   \end{bmatrix},~\mc{Y}(:,:,1)=\begin{bmatrix}
 2   & 2   &-1\\
	   -0.8 &  -0.8  &0.4\\
	    0.4 &   0.4  &-0.2
   \end{bmatrix},\]
    \[\mc{Y}(:,:,2)=\begin{bmatrix}
      -0.2973  &   0.2973   &  0.0155\\
	   -0.3694    & 0.0361  &  -0.3488\\
	   -0.7595     &0.4261  &  -0.2302
   \end{bmatrix},~\mc{Y}(:,:,3)=\begin{bmatrix}
 -1.6667  & -2.3333 &   1\\
	    1.1333  &  0.8&   -0.0667\\
	    0.2667 &  -0.7333  &  0.5333
   \end{bmatrix},\]
    \[\mc{Z}(:,:,1)=\begin{bmatrix}
     0.8333 &  -0.3333  &  0.1667\\
	   -0.3333 &   0.1333  & -0.0667\\
	    0.1667  & -0.0667 &   0.0333
   \end{bmatrix},~\mc{Z}(:,:,2)=\begin{bmatrix}
 -0.0982  &  0.0670  &  0.2322\\
	   -0.1241  & -0.1781   &-0.1211\\
	   -0.2842  & -0.0076  &  0.2215
   \end{bmatrix}\]
   and $\mc{Z}(:,:,3)=\begin{bmatrix}
 -0.7222  &  0.2222 &  -0.3889\\
   0.4444  &  0.0889  &  0.1778\\
  0.0556  &  0.1778  & -0.1444
   \end{bmatrix}.$
\end{example}

\begin{theorem}
 Let $M\in\mbc^{p\times p}$ and $\mc{A} \in \mbc^{m\times m\times p}$ with the tubal index $k$.
 Then $\mc{A}^{c,\dagger}$ is the unique solution to the tensor equations
  \begin{equation}\label{eqcmp-rg}
  \mc{A}\m \mc{Y}\m\mc{A}=\mc{A}\m\mc{A}^{\D }\m\mc{A},~ \rg(\mc{Y})\subseteq \rg(\mc{A}^*),~\rg(\mc{Y}^*)\subseteq\rg(\mc{A}).
  \end{equation}
\end{theorem}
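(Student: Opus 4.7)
The plan is to establish existence and uniqueness separately, leveraging Proposition \ref{proprn9} to convert the range inclusions into explicit factorizations through $\mc{A}^*$ and $\mc{A}$.

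For \textbf{existence}, I would verify that $\mc{Y}:=\mc{A}^{c,\dagger}=\mc{A}^{\dagger}\m\mc{A}\m\mc{A}^{\D}\m\mc{A}\m\mc{A}^{\dagger}$ meets all three requirements. The equation $\mc{A}\m\mc{Y}\m\mc{A}=\mc{A}\m\mc{A}^{\D}\m\mc{A}$ follows by two applications of $\mc{A}\m\mc{A}^{\dagger}\m\mc{A}=\mc{A}$ at the outer ends. For $\rg(\mc{Y})\subseteq\rg(\mc{A}^{*})$, I would first note that $\mc{A}^{\dagger}=\mc{A}^{\dagger}\m\mc{A}\m\mc{A}^{\dagger}=\mc{A}^{*}\m(\mc{A}^{\dagger})^{*}\m\mc{A}^{\dagger}$ (using the Hermiticity $\mc{A}^{\dagger}\m\mc{A}=(\mc{A}^{\dagger}\m\mc{A})^{*}$), so $\rg(\mc{A}^{\dagger})\subseteq\rg(\mc{A}^{*})$ by Proposition \ref{proprn9}, and since $\mc{Y}=\mc{A}^{\dagger}\m[\mc{A}\m\mc{A}^{\D}\m\mc{A}\m\mc{A}^{\dagger}]$ lies in $\rg(\mc{A}^{\dagger})$ on the left, the inclusion follows. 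For $\rg(\mc{Y}^{*})\subseteq\rg(\mc{A})$, I would take conjugates to obtain $\mc{Y}^{*}=(\mc{A}^{\dagger})^{*}\m\mc{A}^{*}\m(\mc{A}^{\D})^{*}\m\mc{A}^{*}\m(\mc{A}^{\dagger})^{*}$ and then apply the Hermiticity identity $\mc{A}\m\mc{A}^{\dagger}=(\mc{A}^{\dagger})^{*}\m\mc{A}^{*}$ to factor an $\mc{A}$ on the left.

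For \textbf{uniqueness}, I would first prove two auxiliary identities that serve as the backbone of the argument:
\begin{equation*}
\rg(\mc{D})\subseteq\rg(\mc{A}^{*})\ \Longrightarrow\ \mc{A}^{\dagger}\m\mc{A}\m\mc{D}=\mc{D},\qquad \rg(\mc{D}^{*})\subseteq\rg(\mc{A})\ \Longrightarrow\ \mc{D}\m\mc{A}\m\mc{A}^{\dagger}=\mc{D}.
\end{equation*}
The first follows by writing $\mc{D}=\mc{A}^{*}\m\mc{U}$ from Proposition \ref{proprn9}, then using $\mc{A}^{\dagger}\m\mc{A}\m\mc{A}^{*}=\mc{A}^{*}\m(\mc{A}^{\dagger})^{*}\m\mc{A}^{*}=\mc{A}^{*}$ (combining Hermiticity of $\mc{A}^{\dagger}\m\mc{A}$ with the conjugated MP identity $\mc{A}^{*}\m(\mc{A}^{\dagger})^{*}\m\mc{A}^{*}=\mc{A}^{*}$). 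The second follows by the symmetric argument after conjugating.

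Given any two solutions $\mc{Y}_{1},\mc{Y}_{2}$ of \eqref{eqcmp-rg}, the difference $\mc{D}=\mc{Y}_{1}-\mc{Y}_{2}$ inherits $\mc{A}\m\mc{D}\m\mc{A}=\mc{O}$ and both range inclusions. Applying the two identities above yields
\begin{equation*}
\mc{D}=\mc{A}^{\dagger}\m\mc{A}\m\mc{D}\m\mc{A}\m\mc{A}^{\dagger}=\mc{A}^{\dagger}\m(\mc{A}\m\mc{D}\m\mc{A})\m\mc{A}^{\dagger}=\mc{O},
\end{equation*}
forcing $\mc{Y}_{1}=\mc{Y}_{2}$. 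The main obstacle I anticipate is establishing the two auxiliary projector-style identities cleanly, since the $M$-product forces one to handle Hermitian conjugation together with Proposition \ref{proprn9} rather than relying on pointwise matrix arguments; once those are in place, the rest is routine substitution.
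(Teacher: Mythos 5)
Your argument is correct, and the existence half coincides with the paper's: both verify $\mc{A}\m\mc{Y}\m\mc{A}=\mc{A}\m\mc{A}^{\D}\m\mc{A}$ via $\mc{A}\m\mc{A}^{\dagger}\m\mc{A}=\mc{A}$ and obtain the two range inclusions by rewriting $\mc{Y}=\mc{A}^{*}\m(\mc{A}^{\dagger})^{*}\m\mc{A}^{\D,\dagger}$ and $\mc{Y}^{*}=\mc{A}\m(\cdots)$ through the Hermiticity of $\mc{A}^{\dagger}\m\mc{A}$ and $\mc{A}\m\mc{A}^{\dagger}$. Where you genuinely diverge is uniqueness. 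The paper also factors the difference $\mc{X}=\mc{Y}-\mc{Z}$ as $\mc{X}=\mc{A}^{*}\m\mc{S}$, $\mc{X}^{*}=\mc{A}\m\mc{T}$ via Proposition \ref{proprn9}, but then concludes by a star-cancellation argument: from $\mc{A}\m\mc{X}\m\mc{A}=\mc{O}$ it computes $\mc{A}\m\mc{X}\m(\mc{A}\m\mc{X})^{*}=\mc{A}\m\mc{X}\m\mc{A}\m\mc{T}\m\mc{A}^{*}=\mc{O}$, hence $\mc{A}\m\mc{X}=\mc{O}$, and then $\mc{X}^{*}\m\mc{X}=\mc{S}^{*}\m\mc{A}\m\mc{X}=\mc{O}$, hence $\mc{X}=\mc{O}$; this silently invokes the property that $\mc{Z}\m\mc{Z}^{*}=\mc{O}$ forces $\mc{Z}=\mc{O}$ (valid slice-wise in the transform domain). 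You instead derive the projector identities $\mc{A}^{\dagger}\m\mc{A}\m\mc{D}=\mc{D}$ and $\mc{D}\m\mc{A}\m\mc{A}^{\dagger}=\mc{D}$ from the range inclusions, using only $\mc{A}^{\dagger}\m\mc{A}\m\mc{A}^{*}=\mc{A}^{*}$ and its dual, and then collapse $\mc{D}=\mc{A}^{\dagger}\m(\mc{A}\m\mc{D}\m\mc{A})\m\mc{A}^{\dagger}=\mc{O}$ in one line. Your route reuses the MP inverse in the uniqueness step but avoids the cancellation lemma, which makes the final step purely substitutional; the paper's route avoids mentioning $\mc{A}^{\dagger}$ in the uniqueness step at the cost of needing that cancellation property of the conjugate transpose. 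Both are sound under the $M$-product, since the reverse-order law for $(\cdot)^{*}$ and the defining MP equations hold slice-wise in the transform domain, so your proof is a legitimate, slightly more self-contained alternative.
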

\begin{proof}
Let $\mc{Y}=\mc{A}^{c,\dagger}$. Then
\[\mc{A}\m \mc{Y}\m\mc{A}=\mc{A}\m \mc{A}^{\dagger}\m\mc{A}\m\mc{A}^\D \m\mc{A}\m\mc{A}^{\dagger}\m\mc{A}=\mc{A}\m\mc{A}^{\D }\m\mc{A}.\]
The range conditions follow from the following identities:
\[\mc{Y}=\mc{A}^{\dagger}\m\mc{A}\m\mc{A}^{\D ,\dagger}=(\mc{A}^{\dagger}\m\mc{A})^*\m\mc{A}^{\D ,\dagger}=\mc{A}^*\m(\mc{A}^{\dagger})^*\m\mc{A}^{\D ,\dagger},\]
\[\mc{Y}^*=(\mc{A}^{\dagger,\D}\m\mc{A}\m\mc{A}^{\dagger})^*=(\mc{A}^{\dagger,\D}\m\mc{A}\m(\mc{A}^{\dagger})^*)^*=\mc{A}\m(\mc{A}^{\dagger}\m(\mc{A}^{\dagger,\D})^*.\]
Suppose that there exist two solutions $\mc{Y}$ and $\mc{Z}$, that satisfy equation \eqref{eqcmp-rg}. Then
\begin{equation}\label{eq-pr1}
  \mc{A}\m \mc{Y}\m\mc{A}=\mc{A}\m\mc{A}^{\D }\m\mc{A},~  \mc{Y}=\mc{A}^{*}\m\mc{S}_1,~\mc{Y}^*=\mc{A}\m\mc{T}_1,
 \end{equation}
 \begin{equation}\label{eq-pr2}
  \mc{A}\m\mc{Z}\m\mc{A}= \mc{A}\m\mc{A}^{\D }\m\mc{A},~\mc{Z}= \mc{A}^{*} \m\mc{S}_2,~\mc{Z}^*=\mc{A}\m\mc{T}_2
\end{equation}
for some $\mc{S}_1,~\mc{S}_2,~\mc{T}_1,~\mc{T}_2\in\mbc^{m\times 1\times p}$. Define $\mc{X}=\mc{Y}-\mc{Z},~\mc{S}=\mc{S}_1-\mc{S}_2$ and $\mc{T}=\mc{T}_1-\mc{T}_2$.
Then by equations \eqref{eq-pr1} and \eqref{eq-pr2}, one obtains
\begin{equation}\label{eq-pr3}
\mc{A}\m\mc{X}\m\mc{A}=\mc{O},~\mc{X}=\mc{A}^*\m\mc{S},~\mc{X}^*=\mc{A}\m\mc{T}.
\end{equation}
An application of the equation \eqref{eq-pr3} implies
\[
\mc{A}\m\mc{X}\m(\mc{A}\m\mc{X})^*=\mc{A}\m\mc{X}\m\mc{X}^*\m\mc{A}^*=\mc{A}\m\mc{X}\m\mc{A}\m\mc{T}\m\mc{A}^*=\mc{O},\]
and hence $\mc{A}\m\mc{X}=\mc{O}$. Now $\mc{X}^*\m\mc{X}=\mc{S}^*\m\mc{A}\m\mc{X}=\mc{O}$.
Therefore, $\mc{X}=\mc{O}$ and completes the proof.
\end{proof}

The following results for the DMP and MPD inverses are derived using similar principles.
\begin{theorem}
Let $M\in\mbc^{p\times p}$ and $\mc{A} \in \mbc^{m\times m\times p}$ with tubal index $k$.
Then $\mc{A}^{\D ,\dagger}$ is a unique solution to the equations
   \[
   \mc{A}^{\dagger}\m\mc{Y}\m \mc{A} = \mc{A}^{\dagger,\D},~\rg(\mc{Y})\subseteq \rg(\mc{A}^*),~\rg(\mc{Y}^*)\subseteq
  \rg(\mc{A}).\]
\end{theorem}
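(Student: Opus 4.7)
The plan is to mirror the proof structure of the preceding CMP characterization, splitting the argument into existence (verifying that $\mc{Y}:=\mc{A}^{\D,\dagger}$ satisfies all three conditions) and uniqueness (showing that any two solutions must agree).

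For existence, I would substitute $\mc{Y}=\mc{A}^{\D}\m\mc{A}\m\mc{A}^{\dagger}$ into the main equation to obtain $\mc{A}^{\dagger}\m\mc{A}^{\D}\m\mc{A}\m\mc{A}^{\dagger}\m\mc{A}$, then collapse the rightmost three factors via the Moore--Penrose identity $\mc{A}\m\mc{A}^{\dagger}\m\mc{A}=\mc{A}$ to get $\mc{A}^{\dagger}\m\mc{A}^{\D}\m\mc{A}$, and finally apply the commutativity $\mc{A}^{\D}\m\mc{A}=\mc{A}\m\mc{A}^{\D}$ from Proposition~\ref{PropTD1} to arrive at $\mc{A}^{\dagger}\m\mc{A}\m\mc{A}^{\D}=\mc{A}^{\dagger,\D}$. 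For the two range inclusions I would appeal to Proposition~\ref{proprn9}: using Hermiticity $(\mc{A}^{\dagger}\m\mc{A})^{*}=\mc{A}^{\dagger}\m\mc{A}$ one can factor $\mc{A}^{\dagger}=\mc{A}^{*}\m(\mc{A}^{\dagger})^{*}\m\mc{A}^{\dagger}$, which places $\mc{A}^{*}$ as a left factor of $\mc{Y}$ and thereby yields $\rg(\mc{Y})\subseteq\rg(\mc{A}^{*})$; symmetrically, writing $\mc{Y}^{*}=(\mc{A}^{\dagger})^{*}\m\mc{A}^{*}\m(\mc{A}^{\D})^{*}$ and using Hermiticity of $\mc{A}\m\mc{A}^{\dagger}$ to extract a leading $\mc{A}$ delivers $\rg(\mc{Y}^{*})\subseteq\rg(\mc{A})$.

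For uniqueness, let $\mc{Y}, \mc{Z}$ be two solutions and set $\mc{X}=\mc{Y}-\mc{Z}$. By linearity of the three constraints, $\mc{A}^{\dagger}\m\mc{X}\m\mc{A}=\mc{O}$, and Proposition~\ref{proprn9} supplies tensors $\mc{S}, \mc{T}$ with $\mc{X}=\mc{A}^{*}\m\mc{S}$ and $\mc{X}^{*}=\mc{A}\m\mc{T}$. The first reduction is to right-multiply the defining equation by $\mc{A}^{\dagger}$ and invoke the identity $\mc{X}\m\mc{A}\m\mc{A}^{\dagger}=\mc{X}$ (the adjoint of $\mc{A}\m\mc{A}^{\dagger}\m\mc{X}^{*}=\mc{X}^{*}$, which itself follows from $\rg(\mc{X}^{*})\subseteq\rg(\mc{A})$ together with Hermiticity of $\mc{A}\m\mc{A}^{\dagger}$) to collapse the equation to $\mc{A}^{\dagger}\m\mc{X}=\mc{O}$. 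A parallel chain, using the factorization $\mc{X}=\mc{A}^{*}\m\mc{S}$ together with the identity $\mc{A}^{*}\m\mc{A}\m\mc{A}^{\dagger}=\mc{A}^{*}$ (obtained by adjoining $\mc{A}\m\mc{A}^{\dagger}\m\mc{A}=\mc{A}$), should then give $\mc{X}\m\mc{A}=\mc{O}$; combining with $\mc{X}^{*}=\mc{A}\m\mc{T}$ yields $\mc{X}\m\mc{X}^{*}=\mc{X}\m\mc{A}\m\mc{T}=\mc{O}$, forcing $\mc{X}=\mc{O}$.

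The hard part will be this last uniqueness chain. Unlike the preceding CMP case—where $\mc{A}\m\mc{X}\m\mc{A}=\mc{O}$ feeds directly into $\mc{A}\m\mc{X}\m\mc{X}^{*}\m\mc{A}^{*}=\mc{O}$—the presence of $\mc{A}^{\dagger}$ (rather than $\mc{A}$) on the left of $\mc{X}$ forces an extra reduction via the Hermitian-projector identities for $\mc{A}\m\mc{A}^{\dagger}$ and $\mc{A}^{\dagger}\m\mc{A}$ before cancellation can take place, and the two range constraints must be invoked in a carefully coordinated order rather than one at a time.
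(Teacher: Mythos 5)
Your verification of the defining equation is fine ($\mc{A}^{\dagger}\m\mc{A}^{\D,\dagger}\m\mc{A}=\mc{A}^{\dagger}\m\mc{A}^{\D}\m\mc{A}=\mc{A}^{\dagger,\D}$), and so is the inclusion $\rg((\mc{A}^{\D,\dagger})^*)\subseteq\rg(\mc{A})$, but the step claiming $\rg(\mc{A}^{\D,\dagger})\subseteq\rg(\mc{A}^*)$ does not work and cannot be repaired. Writing $\mc{A}^{\dagger}=\mc{A}^*\m(\mc{A}^{\dagger})^*\m\mc{A}^{\dagger}$ inserts $\mc{A}^*$ into the \emph{interior} of $\mc{A}^{\D}\m\mc{A}\m\mc{A}^{\dagger}$, whereas Proposition \ref{proprn9} requires $\mc{A}^*$ as a \emph{left} factor; that trick is exactly what makes the CMP case work (the leftmost factor of $\mc{A}^{c,\dagger}$ is $\mc{A}^{\dagger}$), and it does not transfer to the DMP inverse, whose leftmost factor is $\mc{A}^{\D}$. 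In fact $\rg(\mc{A}^{\D,\dagger})=\rg(\mc{A}^{\D})$ (because $\mc{A}^{\D,\dagger}\m\mc{A}=\mc{A}^{\D}\m\mc{A}$ and $\mc{A}^{\D}=\mc{A}^{\D,\dagger}\m\mc{A}\m\mc{A}^{\D}$), and this range need not lie in $\rg(\mc{A}^*)$: taking every frontal slice of $\tilde{\mc{A}}$ equal to $A=\begin{pmatrix}1&1\\0&0\end{pmatrix}$ gives, slice-wise, $A^{\D,\dagger}=AA^{\dagger}$ with range spanned by $e_1$, while $\mathcal{R}(A^*)$ is spanned by $(1,1)^T$. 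So the "existence" half of the theorem already fails for this tensor.

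The uniqueness chain breaks as well. From the homogeneous conditions you correctly obtain $\mc{A}^{\dagger}\m\mc{X}=\mc{O}$ (using $\mc{X}\m\mc{A}\m\mc{A}^{\dagger}=\mc{X}$), but the asserted next step, that this "should then give $\mc{X}\m\mc{A}=\mc{O}$", is a non sequitur: combining $\mc{A}^{\dagger}\m\mc{X}=\mc{O}$ with $\mc{X}=\mc{A}^*\m\mc{S}$ only yields $\mc{A}^*\m\mc{X}=\mc{O}$, i.e. $\mc{X}^*\m\mc{A}=\mc{O}$, and this cannot be played against $\mc{X}^*=\mc{A}\m\mc{T}$ to force $\mc{X}=\mc{O}$. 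Indeed the homogeneous system has nonzero solutions: slice-wise, $A=\begin{pmatrix}0&1\\0&0\end{pmatrix}$ and $X=\begin{pmatrix}0&0\\1&0\end{pmatrix}$ satisfy $A^{\dagger}XA=0$, $\mathcal{R}(X)\subseteq\mathcal{R}(A^*)$ and $\mathcal{R}(X^*)\subseteq\mathcal{R}(A)$, so uniqueness fails too. Note that the paper offers no actual proof of this theorem (it only asserts that it is "derived using similar principles" as the CMP characterization); your attempt shows concretely where that analogy collapses, and the statement itself would need amended range constraints (adapted to $\rg(\mc{A}^{\D,\dagger})=\rg(\mc{A}^{\D})$) before any proof along these lines could succeed.
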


\begin{theorem}
 Let $M\in\mbc^{p\times p}$ and $\mc{A} \in \mbc^{m\times m\times p}$ with tubal index $k$.
Under these assumptions $\mc{A}^{\dagger,\D}$ represents a unique solution to the following constrained tensor equation
   \[
   \mc{A}\m\mc{Y}\m \mc{A}^{\dagger} = \mc{A}^{\D ,\dagger},~\rg(\mc{Y})\subseteq \rg(\mc{A}^*),~\rg(\mc{Y}^*)\subseteq
  \rg(\mc{A}).\]
\end{theorem}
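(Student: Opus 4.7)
The plan is to follow the template of the CMP characterization proof associated with equation \eqref{eqcmp-rg}, splitting the argument into existence and uniqueness. For existence, take $\mc{Y} = \mc{A}^{\dagger,\D} = \mc{A}^{\dagger}\m\mc{A}\m\mc{A}^{\D}$ and verify each condition in turn. The equation $\mc{A}\m\mc{Y}\m\mc{A}^{\dagger} = \mc{A}^{\D,\dagger}$ follows from $\mc{A}\m\mc{A}^{\dagger}\m\mc{A} = \mc{A}$ together with the Drazin commutation $\mc{A}\m\mc{A}^{\D} = \mc{A}^{\D}\m\mc{A}$, collapsing the left side to $\mc{A}^{\D}\m\mc{A}\m\mc{A}^{\dagger}$. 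For $\rg(\mc{Y}) \subseteq \rg(\mc{A}^*)$, I would exploit the Hermiticity of the MP projector $\mc{A}^{\dagger}\m\mc{A}$ to rewrite $\mc{Y} = \mc{A}^{*}\m(\mc{A}^{\dagger})^{*}\m\mc{A}^{\D}$, and then invoke Proposition \ref{proprn9}. For $\rg(\mc{Y}^{*}) \subseteq \rg(\mc{A})$, transpose to $\mc{Y}^{*} = (\mc{A}^{\D})^{*}\m\mc{A}^{\dagger}\m\mc{A}$ (same Hermiticity), and peel off a leading $\mc{A}$ via the Drazin identity $\mc{A}^{\D} = \mc{A}\m(\mc{A}^{\D})^{2}$; Proposition \ref{proprn9} then completes the step.

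For uniqueness, assume $\mc{Y}_{1}, \mc{Y}_{2}$ are two solutions and set $\mc{X} = \mc{Y}_{1} - \mc{Y}_{2}$. By linearity $\mc{A}\m\mc{X}\m\mc{A}^{\dagger} = \mc{O}$, and Proposition \ref{proprn9} applied to the range conditions yields $\mc{X} = \mc{A}^{*}\m\mc{S}$ and $\mc{X}^{*} = \mc{A}\m\mc{T}$ for some $\mc{S}, \mc{T}$. Mirroring the CMP proof, I would compute
\[
(\mc{A}\m\mc{X})\m(\mc{A}\m\mc{X})^{*} = \mc{A}\m\mc{X}\m\mc{X}^{*}\m\mc{A}^{*} = \mc{A}\m\mc{X}\m\mc{A}\m\mc{T}\m\mc{A}^{*},
\]
show that the right-hand side is $\mc{O}$, deduce $\mc{A}\m\mc{X} = \mc{O}$, and finally obtain $\mc{X}^{*}\m\mc{X} = \mc{S}^{*}\m\mc{A}\m\mc{X} = \mc{O}$, so $\mc{X} = \mc{O}$ and $\mc{Y}_{1} = \mc{Y}_{2}$.

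The main obstacle sits inside the uniqueness argument. The CMP proof has $\mc{A}\m\mc{X}\m\mc{A} = \mc{O}$ directly from its defining equation, whereas here I must bridge $\mc{A}\m\mc{X}\m\mc{A}^{\dagger} = \mc{O}$ to $\mc{A}\m\mc{X}\m\mc{A} = \mc{O}$. The first step is to right-multiply by $\mc{A}$, producing $\mc{A}\m\mc{X}\m(\mc{A}^{\dagger}\m\mc{A}) = \mc{O}$; from there I plan to exploit $\mc{X}^{*} = \mc{A}\m\mc{T}$ (equivalently $\mc{X} = \mc{T}^{*}\m\mc{A}^{*}$) together with the identity $\mc{A} = \mc{A}\m\mc{A}^{\dagger}\m\mc{A}$ inserted inside $\mc{A}\m\mc{X}\m\mc{A}\m\mc{T}\m\mc{A}^{*}$, so that the factor $\mc{A}^{\dagger}\m\mc{A}$ in the middle is annihilated by the hypothesis. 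The delicate bookkeeping, which weaves together the Hermitian projector identities $(\mc{A}\m\mc{A}^{\dagger})^{*} = \mc{A}\m\mc{A}^{\dagger}$ and $(\mc{A}^{\dagger}\m\mc{A})^{*} = \mc{A}^{\dagger}\m\mc{A}$ with the two range factorizations of $\mc{X}$, is the step I expect to demand the most care.
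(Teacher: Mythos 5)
Your plan mirrors the template the paper itself gestures at (``derived using similar principles'' as the CMP theorem), but it breaks down at exactly the two places where the MPD case differs from the CMP case, and neither break can be repaired. First, the third range condition in the existence half. From $\mc{Y}^*=(\mc{A}^{\D})^*\m\mc{A}^{\dagger}\m\mc{A}$ you propose to ``peel off a leading $\mc{A}$'' using $\mc{A}^{\D}=\mc{A}\m(\mc{A}^{\D})^{2}$; but conjugation reverses the order, so this identity only gives $(\mc{A}^{\D})^*=((\mc{A}^{\D})^{2})^*\m\mc{A}^*$, i.e.\ the factor surfaces as a \emph{trailing} $\mc{A}^*$, not a leading $\mc{A}$. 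All you can conclude is $\rg(\mc{Y}^*)\subseteq\rg((\mc{A}^{\D})^*)=\rg((\mc{A}^*)^{k})$, which is not contained in $\rg(\mc{A})$ in general. Indeed the inclusion is simply false for $\mc{Y}=\mc{A}^{\dagger,\D}$: take $p=1$, $M=(1)$ (so everything is a matrix) and $A=\begin{pmatrix}1&1\\0&0\end{pmatrix}$, which has index $1$ and $A^{\D}=A$; then $A^{\dagger,\D}=A^{\dagger}A$, whose conjugate transpose has range spanned by $(1,1)^{T}$, while $\mathcal{R}(A)$ is spanned by $(1,0)^{T}$. Unlike the CMP inverse $\mc{A}^{\dagger}\m\mc{A}\m\mc{A}^{\D}\m\mc{A}\m\mc{A}^{\dagger}$, whose two outer factors $\mc{A}^{\dagger}$ hand you both inclusions, $\mc{A}^{\dagger,\D}=\mc{A}^{\dagger}\m\mc{A}\m\mc{A}^{\D}$ has an $\mc{A}^{\dagger}$ only on the left, so only $\rg(\mc{Y})\subseteq\rg(\mc{A}^*)$ comes for free.

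Second, the uniqueness bridge you flag as delicate is in fact impassable. The range hypotheses give the projector identities $\mc{A}^{\dagger}\m\mc{A}\m\mc{X}=\mc{X}$ and $\mc{X}\m\mc{A}\m\mc{A}^{\dagger}=\mc{X}$ (the projector $\mc{A}\m\mc{A}^{\dagger}$ acts on the right of $\mc{X}$, never $\mc{A}^{\dagger}\m\mc{A}$), so inserting $\mc{A}=\mc{A}\m\mc{A}^{\dagger}\m\mc{A}$ into $\mc{A}\m\mc{X}\m\mc{A}\m\mc{T}\m\mc{A}^*$ merely reproduces the same expression, and the substring $\mc{X}\m\mc{A}^{\dagger}$ that the hypothesis $\mc{A}\m\mc{X}\m\mc{A}^{\dagger}=\mc{O}$ would annihilate never appears. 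This is not a bookkeeping issue: with $p=1$ and $A=\begin{pmatrix}0&1\\0&0\end{pmatrix}$ one has $A^{\D}=O$, hence $A^{\D,\dagger}=O$ and $A^{\dagger,\D}=O$, yet every $Y=\begin{pmatrix}0&0\\a&0\end{pmatrix}$ satisfies $AYA^{\dagger}=O$, $\mathcal{R}(Y)\subseteq\mathcal{R}(A^*)$ and $\mathcal{R}(Y^*)\subseteq\mathcal{R}(A)$, so the constrained system has infinitely many solutions. Thus both halves of your argument fail, and no rearrangement of the Hermitian projector identities will rescue them: the CMP template does not transfer to the statement as written (note the paper supplies no actual proof of this theorem, only the remark that it follows similarly), and any correct treatment would first have to modify either the defining equation or the second range constraint.
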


\section{Solution of Multilinear System}

the solutions of multilinear systems in terms of the Drazin inverse and core-EP inverse are the main objectives of this section.
The following singular tensor equation is considered based on the given $\mc{A}\in\mbc^{m\times m\times p }$:
\begin{equation}\label{eqn-multi}
\mc{A}\m\mc{X}=\mc{B},~~~
\mc{X},~\mc{B}\in\mbc^{m\times 1\times p}.
\end{equation}

\begin{proposition}\label{prop-sol}
Let $M\in\mbc^{p\times p}$ and $\mc{A}\in \mbc^{m\times m\times p}$ with tubal index $k$.
Then $\mc{A}^{\D }\m\mc{B}$ satisfies the equation \eqref{eqn-multi} if and only if  $\mc{B}\in \rg(\mc{A}^{k})$.
\end{proposition}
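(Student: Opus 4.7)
The plan is to prove the biconditional by handling each direction separately, and in both cases the key leverage comes from Proposition \ref{PropTD1}(i) together with the defining identities of the Drazin inverse in Definition \ref{drdef}.

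For the forward direction, suppose $\mc{A}^{\D}\m\mc{B}$ solves \eqref{eqn-multi}, that is $\mc{A}\m\mc{A}^{\D}\m\mc{B}=\mc{B}$. The idea is to rewrite the idempotent $\mc{A}\m\mc{A}^{\D}$ in a form that visibly factors through $\mc{A}^{k}$. Proposition \ref{PropTD1}(i) with $s=k$ gives $\mc{A}\m\mc{A}^{\D}=\mc{A}^{k}\m(\mc{A}^{\D})^{k}$, so substituting this into the assumed equation yields $\mc{B}=\mc{A}^{k}\m\bigl[(\mc{A}^{\D})^{k}\m\mc{B}\bigr]$. By the definition of $\rg(\mc{A}^{k})$ this shows $\mc{B}\in\rg(\mc{A}^{k})$.

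For the converse, assume $\mc{B}\in\rg(\mc{A}^{k})$, so there exists $\mc{Z}\in\mbc^{m\times 1\times p}$ with $\mc{B}=\mc{A}^{k}\m\mc{Z}$. The task reduces to verifying $\mc{A}\m\mc{A}^{\D}\m\mc{A}^{k}=\mc{A}^{k}$. Using commutativity $\mc{A}\m\mc{A}^{\D}=\mc{A}^{\D}\m\mc{A}$ from Definition \ref{drdef} and the rank-invariance relation $\mc{A}^{\D}\m\mc{A}^{k+1}=\mc{A}^{k}$, we obtain $\mc{A}\m\mc{A}^{\D}\m\mc{A}^{k}=\mc{A}^{\D}\m\mc{A}^{k+1}=\mc{A}^{k}$. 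Multiplying by $\mc{Z}$ on the right then gives $\mc{A}\m\mc{A}^{\D}\m\mc{B}=\mc{B}$, so $\mc{A}^{\D}\m\mc{B}$ indeed satisfies \eqref{eqn-multi}.

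There is no real obstacle here: the statement is essentially a transcription of the classical matrix fact $AA^{\D}b=b\Leftrightarrow b\in\mathcal{R}(A^{k})$ into the $M$-product setting, and the machinery developed earlier (Proposition \ref{PropTD1}, Definition \ref{drdef}, and the description of $\rg(\cdot)$) lines up exactly so that no passage through $\mat$ or slicewise arguments is needed. The only point that requires a little care is choosing the right exponent $s=k$ in Proposition \ref{PropTD1}(i); smaller exponents would place $\mc{B}$ only in $\rg(\mc{A}^{s})$ for some $s<k$, which does not directly give the desired conclusion without further appeal to the tubal index.
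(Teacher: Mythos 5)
Your proposal is correct and follows essentially the same route as the paper: the converse uses $\mc{A}\m\mc{A}^{\D}\m\mc{A}^{k}=\mc{A}^{\D}\m\mc{A}^{k+1}=\mc{A}^{k}$ applied to $\mc{B}=\mc{A}^{k}\m\mc{T}$, and the forward direction uses $\mc{A}\m\mc{A}^{\D}=\mc{A}^{k}\m(\mc{A}^{\D})^{k}$ (which the paper obtains by iterating $\mc{B}=\mc{A}\m\mc{A}^{\D}\m\mc{B}=\cdots=\mc{A}^{k}\m(\mc{A}^{\D})^{k}\m\mc{B}$, exactly Proposition \ref{PropTD1}(i) with $s=k$) to exhibit $\mc{B}\in\rg(\mc{A}^{k})$. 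The only cosmetic difference is that the paper invokes Proposition \ref{proprn9} at the end, whereas you conclude directly from the definition of $\rg(\cdot)$, which is equally valid.
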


\begin{proof}
Let $\mc{B}\in \rg(\mc{A}^{k})$. Then $\mc{B}=\mc{A}^{k}\m\mc{T}$ for some tensor $\mc{T}\in\mbc^{m\times 1\times p}$.
Now
\[
\mc{A}\m(\mc{A}^{\D }\m\mc{B})=\mc{A}\m\mc{A}^{\D }\m\mc{A}^{k}\m\mc{T}=\mc{A}^{k}\m\mc{T}=\mc{B}.\]
Conversely, if $\mc{A}^{\D }\m\mc{B}$ is a solution of \eqref{eqn-multi}, then  \[\mc{B}=\mc{A}\m\mc{A}^{\D }\m\mc{B}=\mc{A}^2\m(\mc{A}^{\D })^2\m\mc{B}=\cdots=\mc{A}^k\m(\mc{A}^\D )^k\m\mc{B}.\]
Hence, from Proposition \ref{proprn9}, it follows $\mc{B}\in \rg(\mc{A}^{k})$, which completes the proof.
\end{proof}

\begin{theorem}
Let $M\in\mbc^{p\times p}$ and $\mc{A}\in \mbc^{m\times m\times p}$ be with tubal index $k$.
If  $\mc{B}\in \rg(\mc{A}^{k})$, then  $\mc{A}^{\D }\m\mc{B}$ is the only solution to $\mc{A}\m\mc{X}=\mc{B}$ in $\rg(\mc{A}^{k})$.
\end{theorem}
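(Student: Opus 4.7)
The plan is to split the claim into an existence part and a uniqueness part. Existence is essentially already provided by the preceding Proposition~\ref{prop-sol}, so the only new ingredient is the verification that $\mc{A}^{\D}\m\mc{B}$ actually lies in $\rg(\mc{A}^{k})$, and then the reduction of any competing solution to the same tensor.

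For the membership in $\rg(\mc{A}^k)$, I would start from the hypothesis $\mc{B}\in\rg(\mc{A}^k)$ and write $\mc{B}=\mc{A}^{k}\m\mc{T}$ for some $\mc{T}\in\mbc^{m\times 1\times p}$ (via Proposition~\ref{proprn9} with $\mc{Z}$ chosen suitably). Since the Drazin defining identities imply $\mc{A}\m\mc{A}^{\D}=\mc{A}^{\D}\m\mc{A}$, the powers also commute, so
\[
\mc{A}^{\D}\m\mc{B}=\mc{A}^{\D}\m\mc{A}^{k}\m\mc{T}=\mc{A}^{k}\m(\mc{A}^{\D}\m\mc{T})\in\rg(\mc{A}^{k}).
\]
Combined with Proposition~\ref{prop-sol}, this shows $\mc{A}^{\D}\m\mc{B}$ is a solution of $\mc{A}\m\mc{X}=\mc{B}$ that belongs to $\rg(\mc{A}^{k})$.

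For uniqueness, I would let $\mc{X}\in\rg(\mc{A}^{k})$ be any solution and write $\mc{X}=\mc{A}^{k}\m\mc{S}$. A direct computation using the Drazin identity $\mc{A}^{\D}\m\mc{A}^{k+1}=\mc{A}^{k}$ yields
\[
\mc{A}^{\D}\m\mc{B}=\mc{A}^{\D}\m\mc{A}\m\mc{X}=\mc{A}^{\D}\m\mc{A}^{k+1}\m\mc{S}=\mc{A}^{k}\m\mc{S}=\mc{X},
\]
so $\mc{X}$ must coincide with $\mc{A}^{\D}\m\mc{B}$.

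There is no serious obstacle: the proof reduces to two short chains of equalities, and the only technical care needed is to invoke the correct Drazin identity in each step and to use Proposition~\ref{proprn9} to pass between the range-inclusion formulation and the explicit factorization $\mc{A}^{k}\m\mc{T}$. The mildest subtlety is avoiding circularity in the uniqueness step, which is handled cleanly by expressing $\mc{X}$ as $\mc{A}^{k}\m\mc{S}$ \emph{before} applying $\mc{A}^{\D}$, so that the defining identity $\mc{A}^{\D}\m\mc{A}^{k+1}=\mc{A}^{k}$ collapses the expression in one step.
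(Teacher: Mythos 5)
Your proposal is correct and follows essentially the same route as the paper: existence comes from Proposition \ref{prop-sol}, membership of $\mc{A}^{\D}\m\mc{B}$ in $\rg(\mc{A}^{k})$ from a short Drazin identity (the paper uses $\mc{A}^{\D}=\mc{A}^{k}\m(\mc{A}^{\D})^{k+1}$ rather than factoring $\mc{B}$ and commuting, but this is an immaterial variation), and uniqueness by writing the competing solution as $\mc{A}^{k}\m\mc{S}$ and collapsing with $\mc{A}^{\D}\m\mc{A}^{k+1}=\mc{A}^{k}$, exactly as in the paper.
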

\begin{proof}
From Proposition \ref{prop-sol}, it follows that $\mc{X}=\mc{A}^\D \m\mc{B}$ fulfils $\mc{A}\m\mc{X}=\mc{B}$ and
\[\mc{X}=\mc{A}^\D \m\mc{B}=\mc{A}\m(\mc{A}^\D )^2\m\mc{B}=\cdots=\mc{A}^k\m(\mc{A}^\D )^{k+1}\m\mc{B}\in\rg(\mc{A}^k).\]
Next we claim the uniqueness of the solution.
Suppose $\mc{Y}$ is any other solution in  $\rg(\mc{A}^{k})$.
Then $\mc{Y}=\mc{A}^{k}\m\mc{T}$ for some $\mc{T}\in\mbc^{m\times 1\times p}$.
Application of this equality produces
\[\mc{Y}=\mc{A}^{k}\m\mc{Y}=\mc{A}^{\D }\m\mc{A}^{k+1}\m\mc{Y}=\mc{A}^{\D }\m\mc{A}\m\mc{Y}=\mc{A}^{\D }\m\mc{B}=\mc{X},
\]
which completes the proof.
\end{proof}
\begin{theorem}
Let $M\in\mbc^{p\times p}$ and $\mc{A}\in \mbc^{m\times m\times p}$ have tubal index $k$.
The general solution to
\begin{equation}\label{eq-norm}
    \mc{A}^{k+1}\m\mc{X}=\mc{A}^{k}\m\mc{B}
\end{equation}
is given by $\mc{X}= \mc{A}^{\D }\m\mc{B}+(\mc{I}-\mc{A}^\D \m\mc{A})\m\mc{Z}$, where $\mc{Z}\in\mbc^{m\times 1\times p}$ is arbitrary.
\end{theorem}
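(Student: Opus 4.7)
The plan is to prove the two containments separately: first, show that every tensor of the form $\mathcal{X}= \mathcal{A}^\D\m\mathcal{B}+(\mathcal{I}-\mathcal{A}^\D\m\mathcal{A})\m\mathcal{Z}$ solves \eqref{eq-norm}; second, show that an arbitrary solution of \eqref{eq-norm} can be written in this form for an appropriate $\mathcal{Z}$.

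For the first direction I would substitute the candidate into the left-hand side of \eqref{eq-norm}, and use the Drazin identities $\mathcal{A}\m\mathcal{A}^\D=\mathcal{A}^\D\m\mathcal{A}$ together with $\mathcal{A}^\D\m\mathcal{A}^{k+1}=\mathcal{A}^{k}$ (Definition \ref{drdef}) to obtain the consequence $\mathcal{A}^{k+1}\m\mathcal{A}^\D=\mathcal{A}^\D\m\mathcal{A}^{k+1}=\mathcal{A}^{k}$. Then
\[
\mathcal{A}^{k+1}\m\mathcal{X}
=\mathcal{A}^{k+1}\m\mathcal{A}^\D\m\mathcal{B}+\mathcal{A}^{k+1}\m\mathcal{Z}-\mathcal{A}^{k+1}\m\mathcal{A}^\D\m\mathcal{A}\m\mathcal{Z}
=\mathcal{A}^{k}\m\mathcal{B}+\mathcal{A}^{k+1}\m\mathcal{Z}-\mathcal{A}^{k}\m\mathcal{A}\m\mathcal{Z}=\mathcal{A}^{k}\m\mathcal{B},
\]
so the candidate is a solution for every $\mathcal{Z}$.

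For the converse, suppose $\mathcal{X}$ satisfies \eqref{eq-norm}. I would multiply both sides on the left by $(\mathcal{A}^\D)^{k+1}$ and simplify using the commutativity of $\mathcal{A}$ and $\mathcal{A}^\D$ and the identity $\mathcal{A}^\D\m\mathcal{A}^{k+1}=\mathcal{A}^{k}$; repeated application (or an induction that mirrors the matrix case) yields $(\mathcal{A}^\D)^{k+1}\m\mathcal{A}^{k+1}=\mathcal{A}^\D\m\mathcal{A}$ and $(\mathcal{A}^\D)^{k+1}\m\mathcal{A}^{k}=\mathcal{A}^\D$, so that $\mathcal{A}^\D\m\mathcal{A}\m\mathcal{X}=\mathcal{A}^\D\m\mathcal{B}$. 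Taking $\mathcal{Z}=\mathcal{X}$ in the proposed formula then gives
\[
\mathcal{A}^\D\m\mathcal{B}+(\mathcal{I}-\mathcal{A}^\D\m\mathcal{A})\m\mathcal{X}=\mathcal{A}^\D\m\mathcal{A}\m\mathcal{X}+\mathcal{X}-\mathcal{A}^\D\m\mathcal{A}\m\mathcal{X}=\mathcal{X},
\]
so $\mathcal{X}$ is representable in the claimed form.

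The only nontrivial step is establishing the two auxiliary identities $(\mathcal{A}^\D)^{k+1}\m\mathcal{A}^{k+1}=\mathcal{A}^\D\m\mathcal{A}$ and $(\mathcal{A}^\D)^{k+1}\m\mathcal{A}^{k}=\mathcal{A}^\D$; both follow quickly from the three defining equations of the Drazin inverse (in particular, the commutation $\mathcal{A}\m\mathcal{A}^\D=\mathcal{A}^\D\m\mathcal{A}$, the idempotency of $\mathcal{A}\m\mathcal{A}^\D$ noted in Proposition \ref{PropTD1}(i), and the power relation $\mathcal{A}^\D\m\mathcal{A}^{k+1}=\mathcal{A}^{k}$), so no real obstacle arises beyond bookkeeping with the $M$-product.
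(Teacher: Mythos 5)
Your proposal is correct and follows essentially the same route as the paper: verify the candidate by substitution using $\mc{A}^{k+1}\m\mc{A}^{\D}=\mc{A}^{k}$, then show any solution $\mc{X}$ satisfies $\mc{A}^{\D}\m\mc{A}\m\mc{X}=\mc{A}^{\D}\m\mc{B}$ (the paper obtains this via $\mc{A}^{\D}=(\mc{A}^{\D})^{k+1}\m\mc{A}^{k}$, which is exactly your left-multiplication by $(\mc{A}^{\D})^{k+1}$) and conclude that $\mc{X}$ has the stated form with $\mc{Z}=\mc{X}$. No gaps; the auxiliary identities you flag are the same ones the paper uses implicitly.
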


\begin{proof}
The tensor $\mc{X}= \mc{A}^{\D }\m\mc{B}+(\mc{I}-\mc{A}^\D \m\mc{A})\m\mc{Z}$ satisfies
\begin{eqnarray*}
 \mc{A}^{k+1}\m\mc{X}&=&\mc{A}^{k+1}\m\mc{A}^{\D }\m\mc{B}+\mc{A}^{k+1}\m(\mc{I}-\mc{A}^\D \m\mc{A})\m\mc{Z}\\
 &=&\mc{A}^k\m\mc{B}+(\mc{A}^{k+1}-\mc{A}^{k+1})\m\mc{Z}=\mc{A}^k\m\mc{B}.
\end{eqnarray*}
If $\mc{Y}$ is any other solution of \eqref{eq-norm}, then $\mc{Y}$ satisfies
\begin{eqnarray*}
    \mc{Y}&=&\mc{A}^\D \m\mc{B}+\mc{Y}-\mc{A}^\D \m\mc{B}=\mc{A}^\D \m\mc{B}+\mc{Y}-(\mc{A}^\D )^{k+1}\m\mc{A}^{k}\m\mc{B}\\
    &=&\mc{A}^\D \m\mc{B}+\mc{Y}-(\mc{A}^\D )^{k+1}\m\mc{A}^{k+1}\m\mc{Y}=\mc{A}^\D \m\mc{B}+\mc{Y}-\mc{A}^\D \m\mc{A}\m\mc{Y}.
\end{eqnarray*}
Thus $\mc{Y}$ is defined by the expression $\mc{A}^{\D }\m\mc{B}+(\mc{I}-\mc{A}^\D \m\mc{A})\m\mc{Z}$, which was our goal.
\end{proof}
\begin{theorem}
Let $M\in\mbc^{p\times p}$ and $\mc{A}\in \mbc^{m\times m\times p}$ be with the tubal index $k$.
Under the assumption $\mc{B}\in \rg(\mc{A}\m\mc{A}^{\ep})$, the general solution to \eqref{eq-norm} is given by
\[\mc{X}=\mc{A}^{\ep}\m\mc{B}+(\mc{I}-\mc{A}^{\D }\m\mc{A})\m\mc{Z},\]
where $\mc{Z}\in\mbc^{m\times 1\times p}$ be arbitrary.
\end{theorem}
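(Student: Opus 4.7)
The plan is to mirror the structure of the preceding theorem on the Drazin inverse, splitting the argument into (a) a direct verification that every tensor of the prescribed shape solves \eqref{eq-norm}, and (b) a parametrization argument showing that any solution can be cast in that form by a suitable choice of $\mc{Z}$.

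For part (a), I will substitute $\mc{X}=\mc{A}^{\ep}\m\mc{B}+(\mc{I}-\mc{A}^{\D}\m\mc{A})\m\mc{Z}$ into $\mc{A}^{k+1}\m\mc{X}$ and treat the two pieces separately. The homogeneous piece $\mc{A}^{k+1}\m(\mc{I}-\mc{A}^{\D}\m\mc{A})\m\mc{Z}$ collapses to $\mc{O}$ exactly as in the preceding theorem, using commutativity $\mc{A}\m\mc{A}^{\D}=\mc{A}^{\D}\m\mc{A}$ together with $\mc{A}^{\D}\m\mc{A}^{k+1}=\mc{A}^{k}$. For the particular piece, I write $\mc{A}^{k+1}\m\mc{A}^{\ep}\m\mc{B}=\mc{A}^{k}\m(\mc{A}\m\mc{A}^{\ep})\m\mc{B}$ and exploit the hypothesis $\mc{B}\in\rg(\mc{A}\m\mc{A}^{\ep})$: since $\mc{A}\m\mc{A}^{\ep}$ is idempotent (which is immediate from Theorem \ref{ep-prel}(iii), giving $(\mc{A}\m\mc{A}^{\ep})^{2}=\mc{A}\m(\mc{A}^{\ep}\m\mc{A}\m\mc{A}^{\ep})=\mc{A}\m\mc{A}^{\ep}$), it acts as the identity on its own range, so $\mc{A}\m\mc{A}^{\ep}\m\mc{B}=\mc{B}$, and the first piece simplifies to $\mc{A}^{k}\m\mc{B}$, as required.

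For part (b), let $\mc{Y}$ be an arbitrary solution of \eqref{eq-norm}. I will take $\mc{Z}=\mc{Y}$ in the candidate formula; the identity to verify then reduces to $\mc{A}^{\D}\m\mc{A}\m\mc{Y}=\mc{A}^{\ep}\m\mc{B}$. To get this, I premultiply $\mc{A}^{k+1}\m\mc{Y}=\mc{A}^{k}\m\mc{B}$ by $(\mc{A}^{\ep})^{k+1}$. On the right the factor $(\mc{A}^{\ep})^{k+1}\m\mc{A}^{k}$ collapses to $\mc{A}^{\D}$ by Theorem \ref{ep-prel}(iv); on the left the factor $(\mc{A}^{\ep})^{k+1}\m\mc{A}^{k+1}=(\mc{A}^{\ep})^{k+1}\m\mc{A}^{k}\m\mc{A}$ collapses to $\mc{A}^{\D}\m\mc{A}$ by the same theorem. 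Thus $\mc{A}^{\D}\m\mc{A}\m\mc{Y}=\mc{A}^{\D}\m\mc{B}$.  The last step is to upgrade $\mc{A}^{\D}\m\mc{B}$ to $\mc{A}^{\ep}\m\mc{B}$: writing $\mc{B}=\mc{A}\m\mc{A}^{\ep}\m\mc{C}$ and using $\mc{A}^{\ep}=\mc{A}^{\D}\m\mc{A}\m\mc{A}^{\ep}$ (which follows from Theorem \ref{ThmTCEPD} via $\mc{A}^{\D}\m\mc{A}\m\mc{A}^{\D}=\mc{A}^{\D}$), one checks that both $\mc{A}^{\D}\m\mc{B}$ and $\mc{A}^{\ep}\m\mc{B}$ reduce to $\mc{A}^{\ep}\m\mc{C}$.

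The main obstacle is the final identification $\mc{A}^{\D}\m\mc{B}=\mc{A}^{\ep}\m\mc{B}$, since the Drazin and core-EP inverses differ in general. The hypothesis $\mc{B}\in\rg(\mc{A}\m\mc{A}^{\ep})$ is precisely what makes $\mc{B}$ lie in the common range where the two inverses agree, so the whole proof hinges on extracting a witness $\mc{C}$ from the range assumption and combining the identity $\mc{A}^{\ep}=\mc{A}^{\D}\m\mc{A}\m\mc{A}^{\ep}$ from Theorem \ref{ThmTCEPD} with the idempotency of $\mc{A}\m\mc{A}^{\ep}$. The remaining manipulations are routine power-shifting with the Drazin and core-EP defining identities.
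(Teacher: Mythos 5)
Your proposal is correct and follows essentially the same route as the paper: a direct verification of the candidate using a witness $\mc{T}$ (your $\mc{C}$) from $\mc{B}\in\rg(\mc{A}\m\mc{A}^{\ep})$ together with $\mc{A}^{\ep}\m\mc{A}\m\mc{A}^{\ep}=\mc{A}^{\ep}$, followed by showing any solution $\mc{Y}$ has the stated form with $\mc{Z}=\mc{Y}$ via the key identification $\mc{A}^{\ep}\m\mc{B}=\mc{A}^{\D}\m\mc{B}=\mc{A}^{\D}\m\mc{A}\m\mc{Y}$. The only difference is cosmetic: you invoke Theorem \ref{ep-prel}(iv) and Theorem \ref{ThmTCEPD} for intermediate identities that the paper obtains by inline power-shifting with $\mc{A}\m\mc{A}^{\ep}=\mc{A}^{k}\m(\mc{A}^{\ep})^{k}$ and $\mc{A}^{k}=\mc{A}^{k+1}\m\mc{A}^{\D}$.
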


\begin{proof}
Let  $\mc{B}\in \rg(\mc{A}\m\mc{A}^{\ep})$.
Then   $\mc{B}=\mc{A}\m\mc{A}^{\ep}\m\mc{T}$  for some $\mc{T}\in\mathbb{C}^{m\times 1\times p }$.
The expression $\mc{X}=\mc{A}^{\ep}\m\mc{B}+(\mc{I}-\mc{A}^{\D }\m\mc{A})\m\mc{Z}$ satisfies
\begin{eqnarray*}
    \mc{A}^{k+1}\m\mc{X}&=&\mc{A}^{k+1}\m\mc{A}^{\ep}\m\mc{B}+\mc{A}^{k+1}\m( \mc{I}-\mc{A}^{\D }\m\mc{A})\m\mc{Z}=\mc{A}^{k+1}\m\mc{A}^{\ep}\m\mc{B}\\
&=&\mc{A}^{k+1}\m\mc{A}^{\ep}\m\mc{A}\m\mc{A}^{\ep}\m\mc{T}=\mc{A}^k\m\mc{A}\m\mc{A}^{\ep}\m\mc{T}\\
&=&\mc{A}^k\m\mc{B}.
\end{eqnarray*}
If $\mc{Y}$ is any other solution of \eqref{eq-norm}, it can be expressed as
\begin{eqnarray*}
    \mc{Y}&=&\mc{A}^{\ep}\m\mc{B}+\mc{Y}-\mc{A}^{\ep}\m\mc{B}=\mc{A}^{\ep}\m\mc{B}+\mc{Y}-\mc{A}^{\ep}\m\mc{A}\m\mc{A}^{\ep}\m\mc{B}\\
    &=&\mc{A}^{\ep}\m\mc{B}+\mc{Y}-\mc{A}^{\ep}\m\mc{A}^k\m(\mc{A}^{\ep})^k\m\mc{B}\\
    &=& \mc{A}^{\ep}\m\mc{B}+\mc{Y}-\mc{A}^{\ep}\m\mc{A}^{k+1}\m\mc{A}^\D \m(\mc{A}^{\ep})^k\m\mc{B}\\
     &=& \mc{A}^{\ep}\m\mc{B}+\mc{Y}-\mc{A}^\D \m\mc{A}\m\mc{A}^{\ep}\m\mc{B}\\
     &=&\mc{A}^{\ep}\m\mc{B}+\mc{Y}-\mc{A}^\D \m\mc{A}\m\mc{A}^{\ep}\m\mc{A}\m\mc{A}^{\ep}\m\mc{T}\\
    &=&\mc{A}^\D \m\mc{B}+\mc{Y}-\mc{A}^\D \m\mc{B}=\mc{A}^\D \m\mc{B}+\mc{Y}-(\mc{A}^\D )^k\m\mc{A}^k\m\mc{B}\\
    &=&\mc{A}^\D \m\mc{B}+\mc{Y}-(\mc{A}^\D )^k\m\mc{A}^{k+1}\m\mc{Y}=\mc{A}^\D \m\mc{B}+(\mc{I}-\mc{A}^{\D }\m\mc{A})\m\mc{Y}.
\end{eqnarray*}
\end{proof}

\begin{theorem}
   Let $\mc{A}\in \mbc^{m\times m\times p}$  be with the tubal index $k$. 
   The general solution to
   \begin{equation}\label{Equ35}
   \mc{A}^{k}\m \mc{Y} =\mc{A}^{k}\m  \mc{A}^{\dagger}\m  \mc{B}
   \end{equation}
is represented by
   $$\mc{Y}=\mc{A}^{c,\dagger}\m  \mc{B}+(\mc{I}-\mc{A}^{c,\dagger}\m \mc{A})\m \mc{Z}$$
 \end{theorem}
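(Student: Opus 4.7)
The plan is to treat \eqref{Equ35} as a linear tensor equation in $\mc{Y}$ and decompose its general solution in the standard way as a particular solution plus the full kernel of the map $\mc{Y}\mapsto\mc{A}^{k}\m\mc{Y}$. Concretely, I would first verify that $\mc{Y}_{0}:=\mc{A}^{c,\dagger}\m\mc{B}$ solves \eqref{Equ35}, then check that $\{(\mc{I}-\mc{A}^{c,\dagger}\m\mc{A})\m\mc{Z}:\mc{Z}\in\mbc^{m\times 1\times p}\}$ lies in this kernel, and finally prove the converse: any solution $\mc{W}$ admits such a representation.

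The key auxiliary identity I would establish at the outset is
\[
\mc{A}^{k}\m\mc{A}^{c,\dagger}=\mc{A}^{k}\m\mc{A}^{\dagger}.
\]
Unpacking $\mc{A}^{c,\dagger}=\mc{A}^{\dagger}\m\mc{A}\m\mc{A}^{\D}\m\mc{A}\m\mc{A}^{\dagger}$ and applying in sequence the Moore--Penrose cancellation $\mc{A}^{k}\m\mc{A}^{\dagger}\m\mc{A}=\mc{A}^{k}$ (from $\mc{A}\m\mc{A}^{\dagger}\m\mc{A}=\mc{A}$) and the Drazin cancellation $\mc{A}^{k}\m\mc{A}^{\D}\m\mc{A}=\mc{A}^{k}$ (from commutativity of $\mc{A}$ with $\mc{A}^{\D}$ together with $\mc{A}^{\D}\m\mc{A}^{k+1}=\mc{A}^{k}$) collapses the product to $\mc{A}^{k}\m\mc{A}^{\dagger}$. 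The verification half then becomes immediate: $\mc{A}^{k}\m\mc{Y}_{0}=\mc{A}^{k}\m\mc{A}^{\dagger}\m\mc{B}$, and for any $\mc{Z}$,
\[
\mc{A}^{k}\m(\mc{I}-\mc{A}^{c,\dagger}\m\mc{A})\m\mc{Z}=\mc{A}^{k}\m\mc{Z}-\mc{A}^{k}\m\mc{A}^{\dagger}\m\mc{A}\m\mc{Z}=\mc{O}.
\]

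The main obstacle I expect is the converse: proving that every solution $\mc{W}$ of \eqref{Equ35} can be written as $\mc{A}^{c,\dagger}\m\mc{B}+(\mc{I}-\mc{A}^{c,\dagger}\m\mc{A})\m\mc{Z}$ for some $\mc{Z}$. My plan is to attempt the choice $\mc{Z}:=\mc{W}$, which reduces the claim to the single identity $\mc{A}^{c,\dagger}\m\mc{A}\m\mc{W}=\mc{A}^{c,\dagger}\m\mc{B}$. To handle it I would use commutativity of $\mc{A}$ and $\mc{A}^{\D}$ to rewrite $\mc{A}^{\D}\m\mc{A}=(\mc{A}^{\D})^{k}\m\mc{A}^{k}$ and then absorb the trailing $\mc{A}^{\dagger}\m\mc{A}$ via $\mc{A}^{k}\m\mc{A}^{\dagger}\m\mc{A}=\mc{A}^{k}$, yielding the factorization
\[
\mc{A}^{c,\dagger}\m\mc{A}=\mc{A}^{\dagger}\m\mc{A}\m(\mc{A}^{\D})^{k}\m\mc{A}^{k}.
\]
Multiplying by $\mc{W}$ on the right and invoking the hypothesis $\mc{A}^{k}\m\mc{W}=\mc{A}^{k}\m\mc{A}^{\dagger}\m\mc{B}$ replaces $\mc{W}$ by $\mc{A}^{\dagger}\m\mc{B}$ at the correct stage; reversing the same rewrites on the remaining factor then produces $\mc{A}^{c,\dagger}\m\mc{B}$, closing the argument. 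The real subtlety lies only in sequencing these rewrites so that the hypothesis is triggered exactly when $\mc{A}^{k}\m\mc{W}$ sits in the expression; no additional algebraic identities beyond those used in establishing the particular solution are required.
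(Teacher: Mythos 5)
Your proposal is correct and follows essentially the same route as the paper: verify the particular solution and kernel term via the identity $\mc{A}^{k}\m\mc{A}^{c,\dagger}=\mc{A}^{k}\m\mc{A}^{\dagger}$ (and $\mc{A}^{k}\m\mc{A}^{c,\dagger}\m\mc{A}=\mc{A}^{k}$), then show any solution $\mc{W}$ has the stated form with $\mc{Z}=\mc{W}$ by rewriting $\mc{A}^{c,\dagger}\m\mc{A}=\mc{A}^{\dagger}\m\mc{A}\m(\mc{A}^{\D})^{k}\m\mc{A}^{k}$ and invoking the hypothesis. Your write-up merely makes explicit the cancellation identities the paper uses implicitly.
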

 \begin{proof}
 Suppose $\mc{Y}=\mc{A}^{c,\dagger} \m \mc{B}+(\mc{I}-\mc{A}^{c,\dagger}\m \mc{A})\m \mc{Z}$. Then\\

 $\mc{A}^{k}\m \mc{Y}=\mc{A}^{k}\m \mc{A}^{c,\dagger}\m \mc{B}+(\mc{A}^{k}-\mc{A}^{k}\m \mc{A}^{c,\dagger}\m \mc{A})\m\mc{Z}=\mc{A}^{k}\m \mc{A}^{c,\dagger}\m \mc{B}=\mc{A}^{k}\m \mc{A}^{\dagger}\m \mc{B}.$\\
Hence, $\mc{Y}$ satisfies the equation \eqref{Equ35}.

If $\mc{Y}_1$ is any other solution of equation \eqref{Equ35}, then it can be concluded that
\begin{align*}
\mc{Y}_1&=\mc{A}^{c,\dagger}\m \mc{A}\m \mc{Y}_1+\mc{Y}_1-\mc{A}^{c,\dagger}\m \mc{A}\m \mc{Y}_1\\
&=\mc{A}^{\dagger}\m \mc{A}\m \mc{A}^{\D }\m \mc{A}\m \mc{Y}_1+\mc{Y}_1-\mc{A}^{c,\dagger}\m \mc{A}\m \mc{Y}_1\\
&=\mc{A}^{\dagger}\m \mc{A}\m (\mc{A}^{\D })^{k}\m\mc{A}^{k}\m \mc{Y}_1+\mc{Y}_1 - \mc{A}^{c,\dagger}\m \mc{A}\m \mc{Y}_1\\
&=\mc{A}^{\dagger}\m \mc{A}\m \mc{A}^{\D }\m \mc{A}\m \mc{A}^{\dagger}\m \mc{B})+\mc{Y}_1-\mc{A}^{c,\dagger}\m \mc{A}\m \mc{Y}_1\\
&=\mc{A}^{c,\dagger}\m \mc{B}+\mc{Y}_1-\mc{A}^{c,\dagger}\m \mc{A})\m \mc{Y}_1\\
&=\mc{A}^{c,\dagger}\m \mc{B}+(\mc{I}-\mc{A}^{c,\dagger}\m \mc{A})\m \mc{Y}_1.
\end{align*}
Hence,  this solution is unique to this system.
 \end{proof}

\begin{theorem}\label{thmcmp}
  Let $M\in\mbc^{p\times p}$ and $\mc{A}\in \mbc^{m\times m\times p}$ with a tubal index $k$.
Then the assumption $\mc{B}\in \rg(\mc{A}^{k})$ implies that $\mc{A}^{c,\dagger}\m\mc{B}$ is the unique solution of $\mc{A}\m\mc{X}=\mc{B}$ in $\rg(\mc{A}^{\dagger}\m\mc{A}^{k})$.
 \end{theorem}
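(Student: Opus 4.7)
The plan is to verify three things: (a) $\mc{X}_0:=\mc{A}^{c,\dagger}\m\mc{B}$ solves $\mc{A}\m\mc{X}=\mc{B}$, (b) $\mc{X}_0\in\rg(\mc{A}^{\dagger}\m\mc{A}^{k})$, and (c) any solution in that range equals $\mc{X}_0$.

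For (a), I would unfold the definition $\mc{A}^{c,\dagger}=\mc{A}^{\dagger}\m\mc{A}\m\mc{A}^{\D}\m\mc{A}\m\mc{A}^{\dagger}$ and multiply by $\mc{A}$ on the left. Using $\mc{A}\m\mc{A}^{\dagger}\m\mc{A}=\mc{A}$, the product collapses to $\mc{A}\m\mc{A}^{\D}\m\mc{A}\m\mc{A}^{\dagger}\m\mc{B}$. Writing $\mc{B}=\mc{A}^{k}\m\mc{T}$ (by hypothesis) and pushing $\mc{A}\m\mc{A}^{\dagger}$ through using $\mc{A}\m\mc{A}^{\dagger}\m\mc{A}^{k}=\mc{A}^{k}$ reduces the problem to $\mc{A}\m\mc{A}^{\D}\m\mc{A}^{k}\m\mc{T}$. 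Commutativity of $\mc{A}$ with $\mc{A}^{\D}$ and the Drazin identity $\mc{A}^{\D}\m\mc{A}^{k+1}=\mc{A}^{k}$ give $\mc{A}\m\mc{A}^{\D}\m\mc{A}^{k}=\mc{A}^{k}$, so the whole expression equals $\mc{A}^{k}\m\mc{T}=\mc{B}$.

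For (b), I would use Proposition~\ref{PropTD1}(i), specifically $\mc{A}\m\mc{A}^{\D}=\mc{A}^{k}\m(\mc{A}^{\D})^{k}$ (which holds since $(\mc{A}\m\mc{A}^{\D})^{s}=\mc{A}^{s}\m(\mc{A}^{\D})^{s}$ is idempotent for $s\ge k$). Substituting this into
\[
\mc{X}_0=\mc{A}^{\dagger}\m(\mc{A}\m\mc{A}^{\D})\m\mc{A}\m\mc{A}^{\dagger}\m\mc{B}
=\mc{A}^{\dagger}\m\mc{A}^{k}\m\bigl[(\mc{A}^{\D})^{k}\m\mc{A}\m\mc{A}^{\dagger}\m\mc{B}\bigr]
\]
exhibits $\mc{X}_0$ as $\mc{A}^{\dagger}\m\mc{A}^{k}\m\mc{U}$ for an explicit $\mc{U}\in\mbc^{m\times 1\times p}$, hence $\mc{X}_0\in\rg(\mc{A}^{\dagger}\m\mc{A}^{k})$ by the definition of the range.

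For (c), let $\mc{Y}\in\rg(\mc{A}^{\dagger}\m\mc{A}^{k})$ also satisfy $\mc{A}\m\mc{Y}=\mc{B}$. Then $\mc{X}_0-\mc{Y}=\mc{A}^{\dagger}\m\mc{A}^{k}\m\mc{W}$ for some $\mc{W}$, and $\mc{A}\m(\mc{X}_0-\mc{Y})=\mc{O}$. Multiplying the first expression by $\mc{A}$ on the left and invoking $\mc{A}\m\mc{A}^{\dagger}\m\mc{A}^{k}=\mc{A}^{k}$ yields $\mc{A}^{k}\m\mc{W}=\mc{O}$; left-multiplying by $\mc{A}^{\dagger}$ then gives $\mc{A}^{\dagger}\m\mc{A}^{k}\m\mc{W}=\mc{O}$, i.e.\ $\mc{X}_0=\mc{Y}$.

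No step is genuinely hard; the only place needing care is step (b), where the range membership is not visually obvious and requires replacing $\mc{A}\m\mc{A}^{\D}$ with $\mc{A}^{k}\m(\mc{A}^{\D})^{k}$ to expose the factor $\mc{A}^{\dagger}\m\mc{A}^{k}$. Everything else follows from the penrose identity $\mc{A}\m\mc{A}^{\dagger}\m\mc{A}=\mc{A}$, commutativity of $\mc{A}$ and $\mc{A}^{\D}$, and $\mc{A}^{\D}\m\mc{A}^{k+1}=\mc{A}^{k}$.
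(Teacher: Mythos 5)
Your proposal is correct and follows essentially the same route as the paper: the same computation for $\mc{A}\m\mc{A}^{c,\dagger}\m\mc{B}=\mc{B}$ using $\mc{B}=\mc{A}^{k}\m\mc{T}$, and the same identity $\mc{A}^{c,\dagger}\m\mc{B}=\mc{A}^{\dagger}\m\mc{A}^{k}\m(\mc{A}^{\D})^{k}\m\mc{A}\m\mc{A}^{\dagger}\m\mc{B}$ for the range membership. Your uniqueness step, writing the difference of two solutions as $\mc{A}^{\dagger}\m\mc{A}^{k}\m\mc{W}$ and deducing $\mc{A}^{k}\m\mc{W}=\mc{O}$, is just a direct rephrasing of the paper's argument via $\nl(\mc{A})\cap\rg(\mc{A}^{\dagger}\m\mc{A})=\{\mc{O}\}$, so no substantive difference.
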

 \begin{proof}
Clearly $\mc{A}^{c,\dagger}\m\mc{B}\in \rg(\mc{A}^{\dagger}\m\mc{A}^{k})$ because $\mc{A}^{c,\dagger}\m\mc{B}=\mc{A}^{\dagger}\m\mc{A}^k\m(\mc{A}^\D )^k\m\mc{A}\m\mc{A}^{\dagger}\m\mc{B}$. Let $\mc{B}\in\rg(\mc{A}^k)$. Then $\mc{B}=\mc{A}^k\m\mc{T}$ for some $\mc{T}\in\mbc^{m\times 1\times p}$. Now
\begin{eqnarray*}
\mc{A}\m\mc{A}^{c,\dagger}\m\mc{B}&=&\mc{A}\m\mc{A}^{\dagger}\m\mc{A}\m\mc{A}^\D \m\mc{A}\m\mc{A}^{\dagger}\m\mc{B}\\
&=&\mc{A}\m\mc{A}^\D \m\mc{A}\m\mc{A}^{\dagger}\m\mc{A}^k\m\mc{T}=\mc{A}\m\mc{A}^\D \m\mc{A}^k\m\mc{T}\\
&=&\mc{A}^k\m\mc{T}=\mc{B}.
\end{eqnarray*}
If $\mc{Y}$ and $\mc{Z}$ are two solutions in $\rg(\mc{A}^{\dagger}\m\mc{A})^{k})$, then
\begin{eqnarray*}
\mc{Y}-\mc{Z}&\in& \nl(\mc{A})\cap\rg(\mc{A}^{\dagger}\m\mc{A}^{k})\subseteq \nl(\mc{A}^{\dagger}\m\mc{A})\cap\rg(\mc{A}^{\dagger}\m\mc{A}^{k})\\
&&\subseteq \nl(\mc{A}^{\dagger}\m\mc{A})\cap\rg(\mc{A}^{\dagger}\m\mc{A})=\{\mc{O}\}.
\end{eqnarray*}
This  verified the uniqueness of the solution.
 \end{proof}
 The following two results follow by using lines similar to Theorem  \ref{thmcmp}.

\begin{theorem}
 Let $M\in\mbc^{p\times p}$ and $\mc{A}\in \mbc^{m\times m\times p}$ with a tubal index $k$.
 If  $\mc{B}\in \rg(\mc{A}^{k})$, then  $\mc{A}^{\D,\dagger}\m\mc{B}$ is a unique solution to $\mc{A}\m\mc{X}=\mc{B}$ in $\rg(\mc{A}^\D \m\mc{A}^{k})$.
\end{theorem}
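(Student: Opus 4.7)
The plan is to follow the template of Theorem \ref{thmcmp} with the roles of $\mc{A}^{\dagger}$ and $\mc{A}^{\D}$ interchanged in the first factor of the composite inverse. I would establish three items in sequence: first, that $\mc{A}^{\D,\dagger}\m\mc{B}$ actually lies in $\rg(\mc{A}^{\D}\m\mc{A}^k)$; second, that it solves $\mc{A}\m\mc{X}=\mc{B}$; and third, that the solution within $\rg(\mc{A}^{\D}\m\mc{A}^k)$ is unique.

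For the membership claim I would exploit Proposition \ref{PropTD1}(i), which gives $\mc{A}^k\m(\mc{A}^{\D})^k=\mc{A}\m\mc{A}^{\D}=\mc{A}^{\D}\m\mc{A}$, to extract the identity $\mc{A}^{\D}\m\mc{A}=\mc{A}^{\D}\m\mc{A}^k\m(\mc{A}^{\D})^{k-1}$. Substituting this into $\mc{A}^{\D,\dagger}\m\mc{B}=\mc{A}^{\D}\m\mc{A}\m\mc{A}^{\dagger}\m\mc{B}$ produces the factorization $\mc{A}^{\D,\dagger}\m\mc{B}=\mc{A}^{\D}\m\mc{A}^k\m\bigl((\mc{A}^{\D})^{k-1}\m\mc{A}^{\dagger}\m\mc{B}\bigr)$, which places it in $\rg(\mc{A}^{\D}\m\mc{A}^k)$ via Proposition \ref{proprn9}. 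For the solution check, writing $\mc{B}=\mc{A}^k\m\mc{T}$ and expanding yields $\mc{A}\m\mc{A}^{\D,\dagger}\m\mc{B}=\mc{A}\m\mc{A}^{\D}\m\mc{A}\m\mc{A}^{\dagger}\m\mc{A}^k\m\mc{T}$; the MP identity $\mc{A}\m\mc{A}^{\dagger}\m\mc{A}^k=\mc{A}^k$ absorbs the $\mc{A}^{\dagger}$ factor, and the Drazin identity $\mc{A}^{\D}\m\mc{A}^{k+1}=\mc{A}^k$ then collapses the expression to $\mc{A}^k\m\mc{T}=\mc{B}$.

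For uniqueness I would mirror the projection argument at the end of Theorem \ref{thmcmp}. Since $(\mc{A}^{\D}\m\mc{A})^2=\mc{A}^{\D}\m(\mc{A}\m\mc{A}^{\D}\m\mc{A})=\mc{A}^{\D}\m\mc{A}$, the tensor $\mc{A}^{\D}\m\mc{A}$ is an $M$-product idempotent, hence $\rg(\mc{A}^{\D}\m\mc{A})\cap\nl(\mc{A}^{\D}\m\mc{A})=\{\mc{O}\}$. Combined with the trivial inclusion $\nl(\mc{A})\subseteq\nl(\mc{A}^{\D}\m\mc{A})$ and the inclusion $\rg(\mc{A}^{\D}\m\mc{A}^k)\subseteq\rg(\mc{A}^{\D}\m\mc{A})$ (since $\mc{A}^{\D}\m\mc{A}^k=\mc{A}^{\D}\m\mc{A}\m\mc{A}^{k-1}$), two solutions $\mc{Y},\mc{Z}\in\rg(\mc{A}^{\D}\m\mc{A}^k)$ satisfy $\mc{Y}-\mc{Z}\in\nl(\mc{A})\cap\rg(\mc{A}^{\D}\m\mc{A}^k)\subseteq\{\mc{O}\}$. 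I expect the only delicate point to be the initial range membership; once the rewriting $\mc{A}^{\D}\m\mc{A}=\mc{A}^{\D}\m\mc{A}^k\m(\mc{A}^{\D})^{k-1}$ is in hand, everything else is bookkeeping with the defining Drazin and MP identities.
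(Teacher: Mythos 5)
Your proposal is correct and is essentially the proof the paper intends: the paper omits an explicit argument, stating only that the result follows by lines similar to the CMP case (Theorem on $\mc{A}^{c,\dagger}\m\mc{B}$), and your three steps—range membership via $\mc{A}^{\D}\m\mc{A}=\mc{A}^{\D}\m\mc{A}^{k}\m(\mc{A}^{\D})^{k-1}$, solvability via $\mc{A}\m\mc{A}^{\dagger}\m\mc{A}^{k}=\mc{A}^{k}$ and $\mc{A}^{\D}\m\mc{A}^{k+1}=\mc{A}^{k}$, and uniqueness via $\nl(\mc{A})\cap\rg(\mc{A}^{\D}\m\mc{A}^{k})\subseteq\nl(\mc{A}^{\D}\m\mc{A})\cap\rg(\mc{A}^{\D}\m\mc{A})=\{\mc{O}\}$ using idempotency of $\mc{A}^{\D}\m\mc{A}$—are exactly that adaptation, with the projector replaced appropriately. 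All identities you invoke check out (for tubal index $k\geq 1$, the same implicit convention the paper uses), so no gap.
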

\begin{corollary}
Let $M\in\mbc^{p\times p}$ and $\mc{A}\in \mbc^{m\times m\times p}$ with tubal index $k$.
If  $\mc{B}\in \rg(\mc{A}^{k})$, then  $\mc{A}^{\dagger,\D}\m\mc{B}$ represents a unique solution of $\mc{A}\m\mc{X}=\mc{B}$ in $\rg(\mc{A}^{\dagger}\m\mc{A}^{k})$.
\end{corollary}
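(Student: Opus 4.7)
The plan is to mirror the argument for Theorem \ref{thmcmp}, replacing $\mc{A}^{c,\dagger}$ with $\mc{A}^{\dagger,\D}=\mc{A}^{\dagger}\m\mc{A}\m\mc{A}^{\D}$. The proof splits into the same three ingredients: (a) the candidate solution lies in the prescribed range; (b) it actually satisfies $\mc{A}\m\mc{X}=\mc{B}$ under the assumption $\mc{B}\in\rg(\mc{A}^k)$; and (c) any two solutions inside $\rg(\mc{A}^\dagger\m\mc{A}^k)$ must agree.

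First I would verify membership. Using Proposition~\ref{PropTD1}(i) together with the commutativity $\mc{A}\m\mc{A}^{\D}=\mc{A}^{\D}\m\mc{A}$, one rewrites
$$\mc{A}^{\dagger,\D}\m\mc{B}=\mc{A}^{\dagger}\m\mc{A}\m\mc{A}^{\D}\m\mc{B}=\mc{A}^{\dagger}\m\mc{A}^{k}\m(\mc{A}^{\D})^{k}\m\mc{B}\in\rg(\mc{A}^{\dagger}\m\mc{A}^{k}).$$
Next, since $\mc{B}\in\rg(\mc{A}^{k})$ there exists $\mc{T}\in\mbc^{m\times 1\times p}$ with $\mc{B}=\mc{A}^{k}\m\mc{T}$. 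Combining the MP identity $\mc{A}\m\mc{A}^{\dagger}\m\mc{A}=\mc{A}$ with the Drazin identity $\mc{A}\m\mc{A}^{\D}\m\mc{A}^{k}=\mc{A}^{\D}\m\mc{A}^{k+1}=\mc{A}^{k}$ yields
$$\mc{A}\m\mc{A}^{\dagger,\D}\m\mc{B}=\mc{A}\m\mc{A}^{\dagger}\m\mc{A}\m\mc{A}^{\D}\m\mc{A}^{k}\m\mc{T}=\mc{A}\m\mc{A}^{\D}\m\mc{A}^{k}\m\mc{T}=\mc{A}^{k}\m\mc{T}=\mc{B},$$
so $\mc{A}^{\dagger,\D}\m\mc{B}$ is indeed a solution.

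For uniqueness, suppose $\mc{Y},\mc{Z}\in\rg(\mc{A}^{\dagger}\m\mc{A}^{k})$ both satisfy $\mc{A}\m\mc{X}=\mc{B}$. Then $\mc{Y}-\mc{Z}\in\nl(\mc{A})\cap\rg(\mc{A}^{\dagger}\m\mc{A}^{k})$, and the chain of inclusions
$$\nl(\mc{A})\cap\rg(\mc{A}^{\dagger}\m\mc{A}^{k})\subseteq\nl(\mc{A}^{\dagger}\m\mc{A})\cap\rg(\mc{A}^{\dagger}\m\mc{A})=\{\mc{O}\}$$
forces $\mc{Y}=\mc{Z}$. The first inclusion uses that $\mc{A}\m\mc{W}=\mc{O}$ implies $\mc{A}^{\dagger}\m\mc{A}\m\mc{W}=\mc{O}$, and that factoring $\mc{A}^{\dagger}\m\mc{A}^{k}=\mc{A}^{\dagger}\m\mc{A}\m\mc{A}^{k-1}$ together with Proposition~\ref{proprn9} embeds $\rg(\mc{A}^{\dagger}\m\mc{A}^{k})$ into $\rg(\mc{A}^{\dagger}\m\mc{A})$; the final equality follows from $\mc{A}^{\dagger}\m\mc{A}$ being a Hermitian idempotent under the $M$-product, hence an orthogonal projector whose range and null space intersect trivially.

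No step presents a genuine obstacle; the only thing to be careful about is invoking the $M$-product commutativity of $\mc{A}$ and $\mc{A}^{\D}$ (from Definition \ref{drdef}) and the Hermitian-idempotent property of $\mc{A}^{\dagger}\m\mc{A}$ (from the MP axioms recalled at the end of Section~\ref{sec:pre}) at the right places. Because all the needed identities transfer slice-wise through the $\mat$ isomorphism, the argument for the MPD inverse is strictly parallel to the CMP case and no new machinery is required.
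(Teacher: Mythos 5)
Your proof is correct and follows exactly the route the paper intends: the paper states that this corollary ``follows by using lines similar to Theorem \ref{thmcmp}'', and your three steps (membership via $\mc{A}\m\mc{A}^{\D}=\mc{A}^{k}\m(\mc{A}^{\D})^{k}$, solvability via $\mc{A}\m\mc{A}^{\dagger}\m\mc{A}=\mc{A}$ and $\mc{A}^{\D}\m\mc{A}^{k+1}=\mc{A}^{k}$, and uniqueness through $\nl(\mc{A})\cap\rg(\mc{A}^{\dagger}\m\mc{A}^{k})\subseteq\nl(\mc{A}^{\dagger}\m\mc{A})\cap\rg(\mc{A}^{\dagger}\m\mc{A})=\{\mc{O}\}$) are precisely the CMP argument transplanted to $\mc{A}^{\dagger,\D}$. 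Your added justification that $\mc{A}^{\dagger}\m\mc{A}$ is an idempotent (so its range and null space meet trivially) only makes explicit a step the paper leaves unstated.
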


\subsection{Iterative methods}

Multilinear systems have been encountered in several fields of practical importance.
Following this, there is a growing interest in the development of tensor-based iterative methods for solving multilinear systems with the support of the Einstein product \cite{AsishRJ,Brazell} and the t-product \cite{kilmer13}.
In this section, we solve specific multilinear systems associated with invertible tensors using the Jacobi and Gauss-Seidel methods.

It is well known that the iterative method for solving the tensor equation $\mc{A}\m \mc{X} =\mc{B}$ starts with an initial approximation $\mc{X}^{(0)}$ and generates a sequence of solutions $\{\mc{X}\}_{s=1}^{\infty}$ that converges to $\mc{X}$.
A generic pattern of the iterative method in the tensor domain is defined as
\begin{equation}\label{ITS}
\mc{X}^{s}=\mc{T}\m\mc{X}^{s-1}+\mc{C} ~~\text{for}~~ s=1,2,\ldots
\end{equation}
where $\mc{T}$ denotes the iteration tensor.
The approximate solution $\mc{X}^{s}$ converges to the exact solution $\mc{X}=\mc{A}^{-1}\m\mc{B}$ as $s\rightarrow \infty$.

\begin{algorithm}[H]
\caption{Higher order Jacobi Method based on $M$-product} \label{alg:mjacobi}
\begin{algorithmic}[1]
\Procedure{Jacobi}{$\mc{A},\mc{B},\epsilon, \mbox{MAX}$}
\State {\bf Input} $\mc{A} \in \mbc^{m \times m\times p}, \mc{B} \in \mbc^{m \times 1\times p}$ and  $M\in\mbc^{p\times p}$.
\State Compute $\tilde{\mc{A}}=\mc{A}\times_3 M$
\For{$i=1$ to $p$}
\State Compute $\tilde{\mc{D}}(:,:,i)=diag(\tilde{\mc{A}}(:,:,i)),~\tilde{\mc{F}}(:.:,i)=\tilde{\mc{A}}(:,:,i)-\tilde{\mc{D}}(:,:,i)$
\State Compute $\tilde{\mc{T}}(:,:,i)=-(\tilde{\mc{D}})^{-1}(:,:,i)\tilde{\mc{F}}(:.:,i)$ and
$\mc{C}(:,1,i)=(\tilde{\mc{D}})^{-1}(:,:,i)\mc{B}(:,1,i)$
\State Initial guess $\tilde{\mc{X}}^{0}(:,1,i)$
\For{$s=1$ to MAX}
\State $\tilde{\mc{X}}^{s}(:,1,i)=\tilde{\mc{T}}(:,:,i)\tilde{\mc{X}}^{s-1}(:,1,i)+\mc{C}(:,1,i)$
\If
{$\|\tilde{\mc{X}}^{s}(:,1,i)-\tilde{\mc{X}}^{0}(:,1,i)\|\leq \epsilon$}
\State \textbf{break}
\EndIf
\State $\tilde{\mc{X}}^{0}(:,1,i)\leftarrow \tilde{\mc{X}}^{s}(:,1,i)$
\EndFor
\EndFor
 \State Compute $\mc{X}^{s}=\tilde{\mc{X}}^{s}\times_3M^{-1}$
\State \Return $\mc{X}^{s}$
\EndProcedure
 \end{algorithmic}
\end{algorithm}

\begin{theorem}\label{th69}
Let $\mc{A}\in\mathbb{C}^{m \times m \times p}$ and $M\in\mathbb{C}^{p \times p}$. Then
    \begin{itemize}
\item[\rm (i)]  $||\mc{A}^2||_{M} \leq ||\mc{A}||_{M}^2$.
\item[\rm (ii)] $\displaystyle{\lim_{s \rightarrow \infty} \mc{A}^{s}=\mc{O}}$ if  $\| \mc{A} \|_M < 1 $ or if and only  $\rho (\mc{A}) < 1$.
\item[\rm (iii)] The series $\displaystyle{\sum_{s=0}^{\infty}} \mc{A}^s$ converges if and only if $\displaystyle{\lim_{s\to\infty} \mc{A}^s=\mc{O}}$.
In addison, the series converges to $(\mc{I}-\mc{A})^{-1}$.
    \end{itemize}
\end{theorem}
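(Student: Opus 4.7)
The plan is to reduce each of the three claims to its standard matrix counterpart applied frontal-slice-by-frontal-slice in the transform domain fixed by $M$. The engine driving this reduction is the identity $\widetilde{\mc{A}\m\mc{B}}(:,:,i) = \tilde{\mc{A}}(:,:,i)\,\tilde{\mc{B}}(:,:,i)$, which follows from Definition \ref{DeftProd} and the block-diagonal form of $\mat(\cdot)$ given in Definition \ref{Kernfeldlinearrati}. Iterating, $\widetilde{\mc{A}^s}(:,:,i) = [\tilde{\mc{A}}(:,:,i)]^s$ for every $i=\overline{1,p}$. Combined with the slicewise definitions of the tubal norm, the spectral radius, convergence of series, and limits (Definition \ref{tenseries}), this lets every tensor-level statement be pulled back to an assertion about each of the $p$ matrices $\tilde{\mc{A}}(:,:,i)$.

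For part (i), I would simply chain the slicewise identity with submultiplicativity of the spectral matrix norm:
\[
\|\mc{A}^2\|_M=\max_{1\le i\le p}\bigl\|[\tilde{\mc{A}}(:,:,i)]^{2}\bigr\|_{2}\le \max_{1\le i\le p}\bigl\|\tilde{\mc{A}}(:,:,i)\bigr\|_{2}^{2}=\|\mc{A}\|_M^{2}.
\]
For part (ii), the sufficient condition $\|\mc{A}\|_M<1$ gives $\|\tilde{\mc{A}}(:,:,i)\|_2<1$ for each $i$, and hence the matrix power sequence on each slice tends to the zero matrix; applying Definition \ref{tenseries}(ii) then yields $\mc{A}^s\to\mc{O}$. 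For the biconditional with $\rho(\mc{A})<1$, I would invoke the classical matrix fact that $A^s\to 0$ iff $\rho(A)<1$ on each slice, and observe $\rho(\mc{A})=\max_i\rho(\tilde{\mc{A}}(:,:,i))<1$ iff $\rho(\tilde{\mc{A}}(:,:,i))<1$ for all $i$, iff $[\tilde{\mc{A}}(:,:,i)]^s\to 0$ for all $i$, iff $\mc{A}^s\to\mc{O}$.

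For part (iii), I would apply the matrix Neumann-series theorem to each slice: $\sum_s [\tilde{\mc{A}}(:,:,i)]^s$ converges iff $[\tilde{\mc{A}}(:,:,i)]^s\to 0$, and when it converges the sum is $\bigl(I-\tilde{\mc{A}}(:,:,i)\bigr)^{-1}$. Using Definition \ref{tenseries}(i), convergence of the tensor series is exactly simultaneous convergence of all $p$ slice series, so the biconditional $\sum_s\mc{A}^s$ converges iff $\mc{A}^s\to\mc{O}$ is immediate. To identify the limit with $(\mc{I}-\mc{A})^{-1}$, I would assemble the block-diagonal matrix of slicewise Neumann sums, observe that this is $\mat(\mc{I}-\mc{A})^{-1}=\mat((\mc{I}-\mc{A})^{-1})$, and apply $\mat^{-1}$ (that is, multiply by $M^{-1}$ in the third mode) to recover the tensor.

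The routine parts are the norm inequality and reading off the spectral-radius and Neumann-series biconditionals slicewise. The only real bookkeeping obstacle is the interchange of the transform $\times_3 M^{-1}$ with the slicewise limit/series: I would justify this by appealing to the entrywise definition of convergence in Definition \ref{tenseries} together with the linearity and continuity (isomorphism) of $\mat$ and $\mat^{-1}$ already noted in the preliminaries, so that taking $\mat^{-1}$ of the block-diagonal limit yields the claimed tensor limit $(\mc{I}-\mc{A})^{-1}$.
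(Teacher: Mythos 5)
Your proposal is correct, and the overall philosophy---pull everything back to the frontal slices $\tilde{\mc{A}}(:,:,i)$ in the transform domain---is the same one the paper uses; part (i) is essentially identical. Where you genuinely diverge is in (ii) and (iii). For the biconditional in (ii), the paper argues at the tensor level by writing $\mc{A}^s=\mc{U}\m\mc{D}\m\mc{V}^*$ via the tensor SVD and claiming the diagonal of $\mc{D}$ carries the eigenvalues of $\mc{A}^s$; your slicewise appeal to the classical matrix fact ``$A^s\to 0$ iff $\rho(A)<1$'' avoids that step entirely, which is a gain in rigor, since an SVD does not in general display eigenvalues on its diagonal. For (iii), the paper again stays at the tensor level: it deduces invertibility of $\mc{I}-\mc{A}$ from $\rho(\mc{A})<1$, uses the telescoping identity $(\mc{I}+\mc{A}+\cdots+\mc{A}^s)\m(\mc{I}-\mc{A})=\mc{I}-\mc{A}^{s+1}$, post-multiplies by $(\mc{I}-\mc{A})^{-1}$, and lets $s\to\infty$; you instead invoke the matrix Neumann theorem on each slice and reassemble the block-diagonal sums through $\mat^{-1}$. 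The paper's route gives the limit formula without having to discuss interchanging $\times_3 M^{-1}$ with limits (the issue you flag and handle by linearity/continuity of $\mat^{-1}$), while your route makes the ``converges iff $\mc{A}^s\to\mc{O}$'' equivalence immediate from Definition \ref{tenseries}(i) and keeps every analytic statement at the level of standard matrix theory. Both arguments are sound; yours is arguably the cleaner justification of (ii).
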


\begin{proof}
\noindent (i)~Following the tubal norm definition and the property $\|A^2\|_2\leq \|A\|^2_{2}$ of the matrix norm, we can write
\begin{eqnarray*}
    ||\mc{A}^2||_{M} &=& \displaystyle{\max_{1\leq i\leq p}(\|\tilde{\mc{A}^2}(:,:,i)\|_2)}=\displaystyle{\max_{1\leq i\leq p}(\|\tilde{\mc{A}}(:,:,i)^2\|_2)}\leq \displaystyle{\max_{1\leq i\leq p}(\|\tilde{\mc{A}}(:,:,i)\|_2)}\displaystyle{\max_{1\leq i\leq p}(\|\tilde{\mc{A}}(:,:,i)\|_2)}\\
    &=& ||\mc{A}||_{M}||\mc{A}||_{M}= ||\mc{A}||_{M}^2
\end{eqnarray*}
\noindent (ii)~Let $\|\mc{A}\|_M<1$. Then by part (i), we obtain $\|\displaystyle\lim_{s\rightarrow\infty}\mc{A}^s\|_M\leq\displaystyle\lim_{s\rightarrow\infty}\|\mc{A}\|_M^s=0.$
Therefore, $\displaystyle\lim_{s\rightarrow\infty}\mc{A}^s=\mc{O}.$
To verify the second statement in (ii), assume $\rho(\mc{A})<1.$
Based on the singular value decomposition \cite{jin2023}, the tensor $\mc{A}^s$ is rewritten as $\mc{A}^s=\mc{U}\m \mc{\D}\m \mc{V}^*,$ such that $\mc{U},~\mc{V}\in \mathbb{C}^{n\times n \times n_3}$ are unitary and $\mc{D}\in\mathbb{C}^{n\times n \times n_3}$ is diagonal with diagonal positions of $\mc{D}\in\mathbb{C}^{n\times n \times n_3}$ filled by the eigenvalues of $\mc{A}^s.$
Hence $\displaystyle\lim_{s\rightarrow\infty}\mc{A}^s=\mc{O}$ if and only  $|\tilde{\mc{\D}}(:,:,i)| < 1$ for each $i=\overline{1,p}$.
This completes the part (ii).

\noindent (iii)~ It suffices to verify the necessary part, because only if part follows from definition \ref{tenseries}.
Let $\displaystyle\lim_{s\rightarrow\infty}\mc{A}^s=\mc{O}.$
Following the part (ii) of this theorem, $\rho(\mc{A})<1$, it is clear that all eigenvalues of $(\mc{I}-\mc{A})$ are nonzero.
This implies nonsingularity of $(\mc{I}-\mc{A})$.
It is now available
\begin{equation}\label{eq1212}
   (\mc{I}+\mc{A}+\mc{A}^2+\cdots+\mc{A}^s)\m(\mc{I}-\mc{A})=\mc{I}-\mc{A}^{s+1}.
\end{equation}
Post-multiplication of Eq. \eqref{eq1212} by $(\mc{I}-\mc{A})^{-1}$ yields
$\mc{I}+\mc{A}+\mc{A}^2+\cdots+\mc{A}^s=(\mc{I}-\mc{A}^{s+1})\m(\mc{I}-\mc{A})^{-1}.$
The limit $s\rightarrow\infty$ leads to $\displaystyle\sum_{s=0}^\infty\mc{A}^s=(\mc{I}-\mc{A})^{-1}.$
\end{proof}

\begin{algorithm}[H]
\caption{Higher order Gauss-Seidel method based on $M$-product} \label{alg:mgs}
\begin{algorithmic}[1]
\Procedure{Gauss-Seidel}{$\mc{A},\mc{B},\epsilon, \mbox{MAX}$}
\State {\bf Input} $\mc{A} \in \mbc^{m \times m\times p}, \mc{B} \in \mbc^{m \times 1\times p}$ and  $M\in\mbc^{p\times p}$.
\State Compute $\tilde{\mc{A}}=\mc{A}\times_3 M$
\For{$i=1$ to $p$}
\State Compute $\tilde{\mc{L}}(:,:,i)=lowerdiag(\tilde{\mc{A}}(:,:,i)),~\tilde{\mc{U}}(:.:,i)=\tilde{\mc{A}}(:,:,i)-\tilde{\mc{L}}(:,:,i)-diag(\tilde{\mc{A}}(:,:,i))$
\State Compute $\tilde{\mc{T}}(:,:,i)=-(\tilde{\mc{L}})^{-1}(:,:,i)\tilde{\mc{U}}(:.:,i)$ and
$\mc{C}(:,1,i)=(\tilde{\mc{L}})^{-1}(:,:,i)\mc{B}(:,1,i)$
\State Initial guess $\tilde{\mc{X}}^{0}(:,1,i)$
\For{$s=1$ to MAX}
\State $\tilde{\mc{X}}^{s}(:,1,i)=\tilde{\mc{T}}(:,:,i)\tilde{\mc{X}}^{s-1}(:,1,i)+\mc{C}(:,1,i)$
\If
{$\|\tilde{\mc{X}}^{s}(:,1,i)-\tilde{\mc{X}}^{0}(:,1,i)\|\leq \epsilon$}
\State \textbf{break}
\EndIf
\State $\tilde{\mc{X}}^{0}(:,1,i)\leftarrow \tilde{\mc{X}}^{s}(:,1,i)$
\EndFor
\EndFor
 \State Compute $\mc{X}^{s}=\tilde{\mc{X}}^{s}\times_3M^{-1}$
\State \Return $\mc{X}^{s}$
\EndProcedure
 \end{algorithmic}
\end{algorithm}

\begin{theorem}\label{th610}
Consider $\mc{A}\in\mathbb{C}^{m \times m \times p}$ and $M\in\mathbb{C}^{p \times p}$.
The tensor splitting based iterative method
\begin{equation}\label{ITS1}
\mc{X}^{s}=\mc{T}\m\mc{X}^{s-1}+\mc{C} ~~\text{for}~~ s=1,2,\ldots
\end{equation}
converges to $\mc{A}^{-1}\m\mc{B}$  for arbitrary initialization $\mc{X}^{0}$ if and only if $\rho(\mc{T}) < 1$.
\begin{proof}
Without loss of general principle, choose the initial guess $\mc{X}^{0}=\mc{O}$.
Then by \eqref{ITS}, we obtain $\mc{X}^{1}=\mc{C}$.
This leads $\mc{X}^{2}=\mc{T}\m\mc{X}^{(1)}+\mc{C}= \mc{T}\m\mc{C}+\mc{C}=(\mc{T}+\mc{I})\m\mc{C}$.
Continuing the same procedure $(s+1)$ times, we obtain
$$\mc{X}^{s+1}=(\mc{I}+\mc{T}+\mc{T}^{2}+\cdots +\mc{T}^{s})\m\mc{C}.$$
Following Theorem \ref{th69} (iii) with  $s\rightarrow \infty$, we get
\begin{equation*}
\lim_{s\rightarrow \infty}
\mc{X}^{s+1} = (\mc{I}-\mc{T})^{-1}\m\mc{C} \Longleftrightarrow \rho(\mc{T})\leq 1,
\end{equation*}
or equivalently, $\displaystyle\lim_{s\rightarrow\infty}\mc{X}^{s+1}=\mc{A}^{-1}\m\mc{B}\Longleftrightarrow\rho(\mc{T})< 1$.
\end{proof}
\end{theorem}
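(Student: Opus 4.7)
The plan is to work with the error sequence rather than the iterates themselves, which avoids the ``without loss of generality $\mc{X}^{0}=\mc{O}$'' step and treats arbitrary initializations uniformly. Let $\mc{X}^{*}=\mc{A}^{-1}\m\mc{B}$ denote the exact solution. The first step is to verify that $\mc{X}^{*}$ is a fixed point of the iteration, i.e., $\mc{X}^{*}=\mc{T}\m\mc{X}^{*}+\mc{C}$. This is not stated explicitly in the theorem, so I would spell out that $\mc{T},\mc{C}$ arise from a standard splitting of $\mc{A}$ (as in Algorithms \ref{alg:mjacobi} and \ref{alg:mgs}) so that $\mc{T}\m\mc{X}^{*}+\mc{C}=\mc{X}^{*}$ holds; once this is in place the rest is clean.

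Defining the error tensor $\mc{E}^{s}:=\mc{X}^{s}-\mc{X}^{*}$ and subtracting the fixed-point identity from \eqref{ITS1} gives
\[
\mc{E}^{s}=\mc{T}\m\mc{E}^{s-1}, \qquad \text{hence } \mc{E}^{s}=\mc{T}^{s}\m\mc{E}^{0}.
\]
Thus the iteration converges to $\mc{X}^{*}$ for every initialization $\mc{X}^{0}$ if and only if $\mc{T}^{s}\m\mc{E}^{0}\to\mc{O}$ for every $\mc{E}^{0}\in\mbc^{m\times 1\times p}$. By ranging $\mc{E}^{0}$ over frontal slices of the identity tensor (a technique already used in the proof of Proposition \ref{proprn9}), this is equivalent to $\mc{T}^{s}\to\mc{O}$ in the sense of Definition \ref{tenseries}.

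Finally, I would invoke Theorem \ref{th69}(ii) directly: $\mc{T}^{s}\to\mc{O}$ is equivalent to $\rho(\mc{T})<1$. Chaining the equivalences yields the claim. The main obstacle is the first step, namely showing that $\mc{X}^{*}$ really is a fixed point of \eqref{ITS1}; this requires either an auxiliary assumption about how $\mc{T}$ and $\mc{C}$ are constructed from $\mc{A}$ and $\mc{B}$, or a remark that in the applied splittings one has $\mc{T}=\mc{I}-\mc{P}^{-1}\m\mc{A}$ and $\mc{C}=\mc{P}^{-1}\m\mc{B}$ for some invertible $\mc{P}$, from which $\mc{T}\m\mc{X}^{*}+\mc{C}=\mc{X}^{*}$ is immediate. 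Once that is settled, the error-reduction identity $\mc{E}^{s}=\mc{T}^{s}\m\mc{E}^{0}$ reduces the convergence question entirely to the spectral statement already packaged in Theorem \ref{th69}.
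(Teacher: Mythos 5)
Your proposal is correct, but it follows a genuinely different route from the paper's proof. The paper fixes $\mc{X}^{0}=\mc{O}$ (``without loss of generality''), computes the partial sums $\mc{X}^{s+1}=(\mc{I}+\mc{T}+\cdots+\mc{T}^{s})\m\mc{C}$, and passes to the limit via the Neumann-series statement, Theorem \ref{th69}(iii); you instead subtract the fixed-point identity to obtain the error recursion $\mc{E}^{s}=\mc{T}^{s}\m\mc{E}^{0}$ and invoke Theorem \ref{th69}(ii) directly. Your version buys two things. First, it treats arbitrary initializations honestly: the ``for every $\mc{X}^{0}$'' in the claim is exactly the statement that $\mc{T}^{s}\m\mc{E}^{0}\to\mc{O}$ for every $\mc{E}^{0}$, and letting $\mc{E}^{0}$ range over suitable slices recovers $\mc{T}^{s}\to\mc{O}$; the paper's reduction to $\mc{X}^{0}=\mc{O}$ actually weakens the necessity direction, since for special data (e.g.\ $\mc{C}=\mc{O}$) the partial sums converge regardless of $\rho(\mc{T})$. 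Second, you make explicit the consistency hypothesis that both arguments silently need, namely $\mc{T}=\mc{I}-\mc{P}^{-1}\m\mc{A}$ and $\mc{C}=\mc{P}^{-1}\m\mc{B}$ for an invertible splitting tensor $\mc{P}$, which is what identifies the limit $(\mc{I}-\mc{T})^{-1}\m\mc{C}$ with $\mc{A}^{-1}\m\mc{B}$ (the paper asserts this ``or equivalently'' without justification). The one step you should write out is the equivalence ``$\mc{T}^{s}\m\mc{E}^{0}\to\mc{O}$ for all $\mc{E}^{0}$ iff $\mc{T}^{s}\to\mc{O}$'': phrase it in the transform domain, where each frontal slice $\tilde{\mc{T}}(:,:,i)^{s}$ acts on vectors and the claim reduces to the matrix fact by choosing $\tilde{\mc{E}^{0}}(:,1,i)$ to be the standard basis vectors, using the invertibility of $M$ to pass between the original and transform domains (consistent with Definition \ref{tenseries}).
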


The following results are derived as corollaries in view of the above theorems.

\begin{corollary}
Assume $\mc{A}\in\mathbb{C}^{m \times m \times p}$ and $M\in\mathbb{C}^{p \times p}$.
If the inequality $||\mc{T}||< 1$ holds, then the iterative sequence \eqref{ITS} converges to $\mc{A}^{-1}\m\mc{B}$ for arbitrary initial approximation $\mc{X}^{0}$.
\end{corollary}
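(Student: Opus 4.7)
The plan is to derive this corollary by chaining together two earlier results already proved in the section: Theorem \ref{th69}(ii) and Theorem \ref{th610}. Since Theorem \ref{th610} characterizes convergence of the splitting iteration \eqref{ITS} in terms of the spectral radius condition $\rho(\mc{T})<1$, the task reduces to showing that the norm hypothesis $\|\mc{T}\|_M<1$ implies this spectral radius bound.

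First, I would invoke Theorem \ref{th69}(ii), which asserts that $\|\mc{T}\|_M<1$ is a sufficient condition for $\displaystyle\lim_{s\to\infty}\mc{T}^s=\mc{O}$, and that this limit holds if and only if $\rho(\mc{T})<1$. Combining the two halves of the biconditional in Theorem \ref{th69}(ii) therefore yields the implication
\[
\|\mc{T}\|_M<1 \ \Longrightarrow\ \lim_{s\to\infty}\mc{T}^s=\mc{O} \ \Longleftrightarrow\ \rho(\mc{T})<1.
\]
At this stage the hypothesis of Theorem \ref{th610} is in force, so the iterative sequence defined by \eqref{ITS} converges to $\mc{A}^{-1}\m\mc{B}$ regardless of the choice of initial guess $\mc{X}^0$.

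For completeness I would briefly note that the norm $\|\cdot\|$ appearing in the statement is understood as the tubal norm $\|\cdot\|_M$ introduced earlier, since this is the norm under which the key inequality $\|\mc{T}^2\|_M\leq\|\mc{T}\|_M^2$ of Theorem \ref{th69}(i) holds and drives the contraction argument. There is no real obstacle in this proof: it is a direct consequence of packaging Theorem \ref{th69}(ii) with Theorem \ref{th610}, and the only minor point requiring care is making the norm/spectral-radius implication explicit rather than citing it by name, so that the reader sees why the hypothesis $\|\mc{T}\|_M<1$ is strictly stronger than the spectral radius condition and hence always sufficient.
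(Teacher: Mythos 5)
Your proposal is correct and matches the paper's intent exactly: the paper states this corollary without a written proof, noting only that it follows from the preceding theorems, and your argument—using Theorem \ref{th69}(ii) to pass from $\|\mc{T}\|_M<1$ to $\rho(\mc{T})<1$ and then invoking Theorem \ref{th610}—is precisely that intended derivation.
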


\begin{corollary}
If $\mc{A}\in\mathbb{C}^{m \times m \times p}$ is diagonally dominant and $M\in\mathbb{C}^{p \times p}$, then it follows:
    \begin{itemize}
   \item[\rm (a)] Jacobi iterations converge for arbitrary $\mc{X}^{(0)}$.
   \item[\rm (b)] Gauss-Seidel iterations converge for arbitrary $\mc{X}^{(0)}$.
    \end{itemize}
\end{corollary}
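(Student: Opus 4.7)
The plan is to reduce the convergence statement to the scalar (matrix-slice) setting via the operator $\times_3 M$, and then invoke Theorem \ref{th610} together with the classical convergence results for diagonally dominant matrices. The bridge between the tensor iteration and the matrix iterations on the frontal slices of $\tilde{\mc{A}} = \mc{A}\times_3 M$ is already built into Algorithms \ref{alg:mjacobi} and \ref{alg:mgs}: both methods form their iteration tensor $\mc{T}$ so that $\tilde{\mc{T}}(:,:,i)$ is exactly the Jacobi (resp.\ Gauss--Seidel) iteration matrix of $\tilde{\mc{A}}(:,:,i)$.

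First I would use the definition of diagonal dominance under $M$ to record that each frontal slice $\tilde{\mc{A}}(:,:,i)$, $i=\overline{1,p}$, is a diagonally dominant matrix in the ordinary sense, so that each $\tilde{\mc{A}}(:,:,i)$ is invertible and the diagonal part used to build the iteration is well-defined. Next, for part (a), I would write $\tilde{\mc{T}}_J(:,:,i) = -\tilde{\mc{D}}^{-1}(:,:,i)\,\tilde{\mc{F}}(:,:,i)$ as in Algorithm \ref{alg:mjacobi} and appeal to the classical theorem that for any strictly (row) diagonally dominant matrix, the Jacobi iteration matrix has spectral radius strictly less than one. This gives $\rho(\tilde{\mc{T}}_J(:,:,i))<1$ for every $i$. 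By the definition of the spectral radius via $\rho(\mc{T}_J) = \max_{1\leq i\leq p}\rho(\tilde{\mc{T}}_J(:,:,i))$, we conclude $\rho(\mc{T}_J)<1$, and Theorem \ref{th610} then yields convergence from any initial $\mc{X}^{(0)}$ to $\mc{A}^{-1}\m\mc{B}$.

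For part (b), the argument is parallel: the Gauss--Seidel iteration tensor built in Algorithm \ref{alg:mgs} satisfies $\tilde{\mc{T}}_{GS}(:,:,i) = -\tilde{\mc{L}}^{-1}(:,:,i)\,\tilde{\mc{U}}(:,:,i)$, which is the usual Gauss--Seidel iteration matrix of the diagonally dominant matrix $\tilde{\mc{A}}(:,:,i)$. By the classical Stein--Rosenberg/Gauss--Seidel convergence theorem for diagonally dominant matrices, $\rho(\tilde{\mc{T}}_{GS}(:,:,i))<1$ for each $i$, and hence $\rho(\mc{T}_{GS})<1$. Theorem \ref{th610} again delivers convergence.

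The main conceptual obstacle, and essentially the only non-routine point, is making sure that the tensor iteration \eqref{ITS1} in the $M$-domain decouples exactly into independent matrix iterations on the frontal slices $\tilde{\mc{X}}^{s}(:,1,i) = \tilde{\mc{T}}(:,:,i)\,\tilde{\mc{X}}^{s-1}(:,1,i) + \mc{C}(:,1,i)$; this is immediate from the block-diagonal structure of $\mat(\cdot)$ in Definition \ref{Kernfeldlinearrati} and the way $\times_3 M$ transports the $M$-product to slice-wise matrix multiplication. Once that observation is recorded, the corollary becomes a direct assembly of the classical matrix convergence theorems with Theorem \ref{th610}, and no further estimation is needed.
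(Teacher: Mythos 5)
Your proposal is correct and follows exactly the route the paper intends: the paper states this corollary without proof, as a direct consequence of Theorem \ref{th610} combined with the slice-wise definition of diagonal dominance, which is precisely your argument (decouple via $\times_3 M$ into frontal-slice iterations, apply the classical Jacobi/Gauss--Seidel convergence theorems for (strictly) diagonally dominant matrices to get $\rho(\tilde{\mc{T}}(:,:,i))<1$ for each $i$, hence $\rho(\mc{T})<1$, then invoke Theorem \ref{th610}). The only point worth noting is that the classical matrix results require \emph{strict} diagonal dominance, a qualification the paper's definition leaves implicit and which you correctly make explicit.
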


In Tables \ref{Jtable} and \ref{GStable}, we compare the mean CPU time (MT$^M$) when computed under $M$-product again the matrix structured computation mean CPU time (MT). The number of iterations (IT$^M$) it takes in $M$-product and matrix structured computation (denoted by IT) are also presented.

\begin{table}[H]
\begin{center}
\caption{ Comparison analysis of CPU-time, residual errors for Jacobi method for different order tensors and matrices with taking $\epsilon=10^{-10}$ }
\vspace{0.2cm}
        \renewcommand{\arraystretch}{1.1}
        \begin{tabular}{cccccccc}
    \hline
        Size of $\mc{A}$& IT$^M$  & MT$^{M}$  &$\|\mc{A}\m\mc{X}-\mc{B}\|_M$ & Size of $A$ & IT & MT & $\|AX-b\|_F$\\
           \hline
  $100\times 100\times 400$& 88&0.26 &$4.96e^{-09}$ &$2000\times 2000$ &96 & 43.04&$9.15e^{-08}$\\
    \hline
  $200\times 200\times 400$& 88 &0.81 &$9.97e^{-09}$ &$4000\times 4000$ &101 & 71.56&$7.78e^{-08}$\\
  \hline
  $300\times 300\times 400$& 89& 1.01&$1.49e^{-08}$ &$6000\times 6000$ &93& 8275&$2.40e^{-07}$\\
    \hline
  $400\times 400\times 400$& 89 & 1.80&$2.00e^{-08}$ &$8000\times 8000$ & 99 & 19466&$3.41e^{-07}$\\
  \hline
    \end{tabular}\label{Jtable}
\end{center}
\end{table}

\begin{table}[H]
\begin{center}
\caption{ Comparison analysis of CPU-time, residual errors for Gauss-Seidel method for different order tensors and matrices with $\epsilon=10^{-10}$ }
\vspace{0.2cm}
        \renewcommand{\arraystretch}{1.1}
        \begin{tabular}{cccccccc}
        \hline
        Size of $\mc{A}$& IT$^M$  & MT$^{M}$  &$\|\mc{A}\m\mc{X}-\mc{B}\|_M$ & Size of $A$ & IT & MT & $\|AX-b\|_F$\\
    \hline
  $100\times 100\times 400$& 15&0.21 &$1.80e^{-09}$ &$2000\times 2000$ &16 & 8.33&$1.73e^{-08}$\\
    \hline
  $200\times 200\times 400$& 15&0.60&$3.43e^{-09}$ &$4000\times 4000$ &17 & 389.57&$7.33e^{-08}$\\
  \hline
  $300\times 300\times 400$& 15 & 0.76&$5.33e^{-09}$ &$6000\times 6000$ & 17 & 1227.65&$1.92e^{-07}$\\
    \hline
  $400\times 400\times 400$& 15& 1.12&$7.00e^{-09}$ &$8000\times 8000$ &17 &3506.91 &$3.89e^{-07}$\\
  \hline
    \end{tabular}\label{GStable}
\end{center}
\end{table}

\subsection{Tikhonov's regularization}

Next, we define the well-posed multilinear system in the sense of Hardamard.

\begin{definition}
 Let $\mc{A} \in \mbc^{m\times n\times p}$ and  $M\in\mbc^{p\times p}$. Then the multilinear system $\mc{A}\m\mc{X}=\mc{B}$ is called well-posed if there exist unique stable solution.
 Otherwise, this system is ill-posed.
 \end{definition}

Further, we define $\mc{A}$ is ill-conditioned if at least one of the frontal slices of $\tilde{\mc{A}}$ (or $\mc{A}\times_3M$) is ill-conditioned matrix.
Thus for  ill-posed multilinear system $\mc{A}\m\mc{X}=\mc{B}$ or when $\mc{A}$ is ether ill-conditioned or singular, we will apply regularization in the following manner while solving
 \begin{equation}\label{eqmult}
  \mc{A}\m\mc{X}=\mc{B}.
\end{equation}
We first normalize \eqref{eqmult} as $\mc{A}^*\m\mc{A}\m\mc{X}= \mc{A}^*\m\mc{B}$ and then modify the multilinear system based on the Tikhonov's regularization \cite{tikhonov} as follows.\begin{equation}\label{eqregul}
\mc{T}_{\lambda}\m\mc{X}:=(\mc{A}^*\m\mc{A}+\lambda \mc{I})\m\mc{X} = \mc{A}^*\m\mc{B},
\end{equation}
such that  $\lambda (>0)$ is the selected regularization parameter.
There are several regularization methods available for ill-posed linear systems, and those methods can be studied in the future in the framework of multilinear systems.
The solution to \eqref{eqregul} can be computed by Algorithm \ref{alg:mtik}.

\begin{algorithm}[H]
\caption{Tikhonov's regularized solution based on $M$-product} \label{alg:mtik}
\begin{algorithmic}[1]
\State {\bf Input} $\mc{A} \in \mbc^{m \times n\times p}, \mc{B} \in \mbc^{n \times 1\times p}$ and  $M\in\mbc^{p\times p}$.
\State Compute $\mc{T}_{\lambda}=\mc{A}^*\m\mc{A}+\lambda \mc{I},~\tilde{\mc{T}_{\lambda}}=\mc{T}_{\lambda}\times_3 M$, $\mc{C}=\mc{A}^*\m\mc{B}$
\For{$i=1$ to $p$}
\State Compute $\tilde{\mc{X}_{\lambda}}(:,1,i)=\tilde{\mc{T}_{\lambda}}^{-1}(:,:,i)\m\mc{C}(:,1,i)$
\EndFor
 \State Compute $\mc{X}_{\lambda}=\tilde{\mc{X}_{\lambda}}\times_3M^{-1}$
\State \Return $\mc{X}_{\lambda}$
 \end{algorithmic}
\end{algorithm}
In the case of matrices, Barata and Hussein \cite{barata2012moore} proved the following result:
\begin{proposition}\label{tik-mat}
   Let $A\in\mathbb{C}^{m\times n}$. Then $\displaystyle\lim_{\lambda\to 0}(A^*A+\lambda I)A^*=A^{\dagger}$.
\end{proposition}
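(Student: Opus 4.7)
The proof plan is to reduce the problem to a statement about diagonal matrices via the singular value decomposition and then check the limit entry by entry. Note that, as written, the displayed formula should read $\displaystyle\lim_{\lambda\to 0^+}(A^*A+\lambda I)^{-1}A^* = A^{\dagger}$ (an inverse is missing); my plan targets this intended identity.

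First, I would invoke the SVD $A = U\Sigma V^*$, with $U \in \mathbb{C}^{m\times m}$, $V \in \mathbb{C}^{n\times n}$ unitary and $\Sigma \in \mathbb{R}^{m\times n}$ having the singular values $\sigma_1 \geq \sigma_2 \geq \cdots \geq \sigma_r > 0 = \sigma_{r+1} = \cdots$ on its main diagonal. Substituting, $A^*A = V\Sigma^*\Sigma V^*$, so for every $\lambda>0$ the matrix $A^*A + \lambda I = V(\Sigma^*\Sigma + \lambda I_n)V^*$ is Hermitian positive definite and thus invertible, with
\begin{equation*}
(A^*A + \lambda I)^{-1}A^* = V(\Sigma^*\Sigma + \lambda I_n)^{-1}\Sigma^* U^*.
\end{equation*}

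Next, I would observe that $(\Sigma^*\Sigma + \lambda I_n)^{-1}\Sigma^*$ is an $n\times m$ matrix whose only nonzero entries lie on the leading diagonal and equal $\sigma_i/(\sigma_i^2 + \lambda)$ for $i=1,\dots,\min\{m,n\}$. For indices with $\sigma_i > 0$, this scalar tends to $1/\sigma_i$ as $\lambda \to 0^+$, while for $\sigma_i = 0$ it is identically zero. Consequently the diagonal matrix $(\Sigma^*\Sigma + \lambda I_n)^{-1}\Sigma^*$ converges entrywise (hence in any norm) to $\Sigma^{\dagger}$, the Moore--Penrose inverse of $\Sigma$. Multiplication on the left by $V$ and on the right by $U^*$ is continuous, so
\begin{equation*}
\lim_{\lambda\to 0^+}(A^*A + \lambda I)^{-1}A^* = V\Sigma^{\dagger}U^* = A^{\dagger},
\end{equation*}
where the final equality is the standard SVD formula for the Moore--Penrose inverse.

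The calculation is essentially routine; the only place where care is needed is the rectangular/rank-deficient case, namely ensuring that the null singular values do not contribute in the limit and that the dimensions of $\Sigma^*\Sigma + \lambda I_n$ and $\Sigma^*$ are tracked correctly so that $(\Sigma^*\Sigma + \lambda I_n)^{-1}\Sigma^* \to \Sigma^{\dagger}$ holds with the correct shape. This is precisely why the regularization parameter $\lambda$ is indispensable: for $\lambda = 0$ the matrix $A^*A$ is singular whenever $\mathrm{rank}(A) < n$, but adding $\lambda I$ produces an invertible shift whose controlled limit recovers $A^{\dagger}$.
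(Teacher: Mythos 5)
Your argument is correct, and you were right to note that the statement as printed is missing an inverse: the intended identity is $\lim_{\lambda\to 0^+}(A^*A+\lambda I)^{-1}A^*=A^{\dagger}$. The paper itself gives no proof of this proposition — it is quoted from the cited reference of Barata and Hussein — and your SVD reduction to the diagonal case, with $\sigma_i/(\sigma_i^2+\lambda)\to 1/\sigma_i$ for positive singular values and $0$ otherwise, is exactly the standard argument underlying that cited result, so there is nothing to reconcile between your route and the paper's.
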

Using Proposition \ref{tik-mat} in the frontal slice of $\tilde{\mc{A}}$, we can show that the solution $(\mc{A}^*\m\mc{A}+\lambda \mc{I})^{-1}\m\mc{A}^*\m\mc{B}$ of equation \ref{eqregul} converges to the least square solution $\mc{A}^{\dagger}\m\mc{B}$ of equation \eqref{eqmult}.

\subsubsection{Image deblurring}

In this subsection, we investigate the reconstruction of color images from blurred images in the tensor structured domain with the help of $M$-product. The authors of \cite{calvetti1,hansen} discussed the grayscale image deblurring problem in the following ill-posed linear system.
\begin{equation*}
A\vec{x}=\vec{b},~A\in\mathbb{R}^{n^2\times n^2},~\vec{x}\in\mathbb{R}^{n^2\times 1},~\mbox{ and }\vec{b}\in\mathbb{R}^{n^2\times 1},
\end{equation*}
where $A$ is the blurring matrix, $\vec{x}$ is the vectorization of the true image, generated by stacking the columns of the true image $X \in \mathbb{R}^{n \times n}$ one on the other.
Similarly,  $\vec{b}$ is obtained from a blurred image (often corrupted with noise) $B\in \mathbb{R}^{n \times n}$.
Such a blurring process was modeled in \cite{Kernfeldlinear} using the equation
\begin{equation*}
    \mc{A}\m \mc{X}=\mc{B},
\end{equation*}
where $\mc{A}\in \mathbb{R}^{n\times n \times n} $ is the known blurring tensor, which is constructed using a banded Toeplitz matrix.
The same authors in \cite{Kernfeldlinear} considered $\mc{B}\in  \mathbb{R}^{n\times 1\times n}$  and  $\mc{X}\in  \mathbb{R}^{n\times 1\times n}$ using $\mat$ and $\mat^{-1}$.
Now we consider the color image deblurring problem in the form of the tensor equation
\[
    \mc{A}\m \mc{X}=\mc{B},  ~\text{~where~}~\mc{A}\in \mathbb{R}^{n\times n \times 3},~ {\mc{X}\in \mathbb{R}^{n\times n \times 3}, ~\mc{B}\in \mathbb{R}^{n\times n \times 3}}.
\]
Here, the true image is tensor $\mc{X}$, which we must find, and the blurring tensor we need to construct is $\mc{A}$.
Frontal slices of the blurring tensor $\mc{A}^{n \times n \times 3}$ are given by
$$ \mc{A}(i,j,k) =\left\{\begin{array}{ll}
\frac{1}{\sigma \sqrt{2 \pi}} e^{-\frac{(i-j)^{2}}{2 \sigma^{2}}}, & |i-j| \leq b\\
0, & \text { otherwise, }
\end{array}\right.$$
for $k=1,2,3$.
Here, $b$ is the bandwidth, and $\sigma$ controls the amount of smoothing, i.e., the problem is ill-posed when $\sigma$ is larger.

For the numerical experiment, two color images of sizes  $256\times256\times 3 $ and $512\times512\times 3 $ were  considered.
Using $\sigma =4$ and $b=30$, we generate the blurred image $\mc{B} = \mc{A}\m\mc{X} + \mc{N}$, where $\mc{N}$ is a noise tensor, that is constructed by taking each slice as Gaussian noise with the mean $0$ and variance $10^{-3}$.
The true images  are provided in Figure \ref{im1} (a) and Figure \ref{im2} (a), while blurred and noisy images are shown in Figure \ref{im1} (b) and Figure \ref{im2} (b), respectively.
Finally, the regularized approach as described in Algorithm \ref{alg:mtik} can be used to reconstruct of the true image.
The resulting restored images are  shown in Figure \ref{im1} (c) and Figure \ref{im2} (c), respectively.

\begin{figure}[H]
\begin{center}
\subfigure[]{\includegraphics[height=4.6cm]{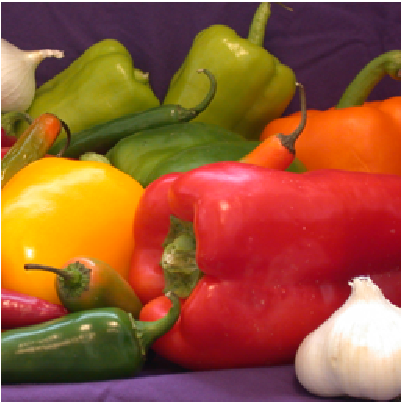}}~~~~
\subfigure[]{\includegraphics[height=4.6cm]{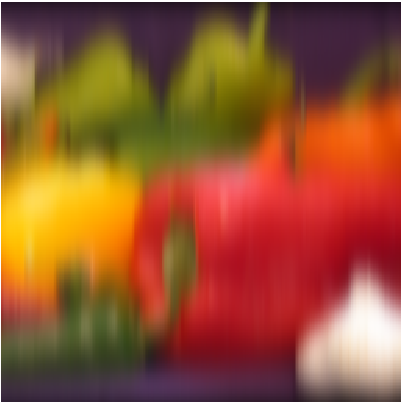}}~~~~
\subfigure[]{\includegraphics[height=4.6cm]{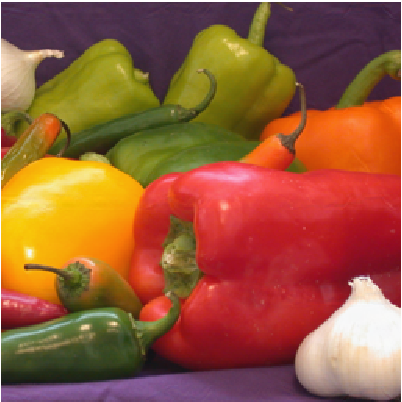}}
\hspace*{-1.05cm}\caption{(a) True image of size $256\times 256$.~~~(b) Blurred noisy image.~~~~~ (c) Reconstructed image.
}\label{im1}
\end{center}
\end{figure}

\begin{figure}[H]
\begin{center}
\subfigure[]{\includegraphics[height=4.6cm]{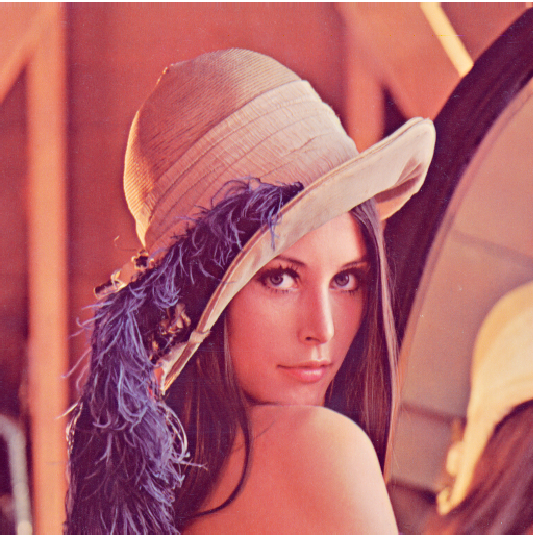}}~~~~
\subfigure[]{\includegraphics[height=4.6cm]{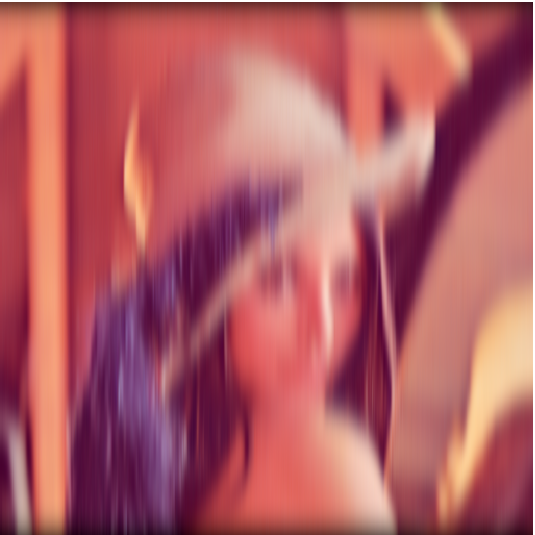}}~~~~
\subfigure[]{\includegraphics[height=4.6cm]{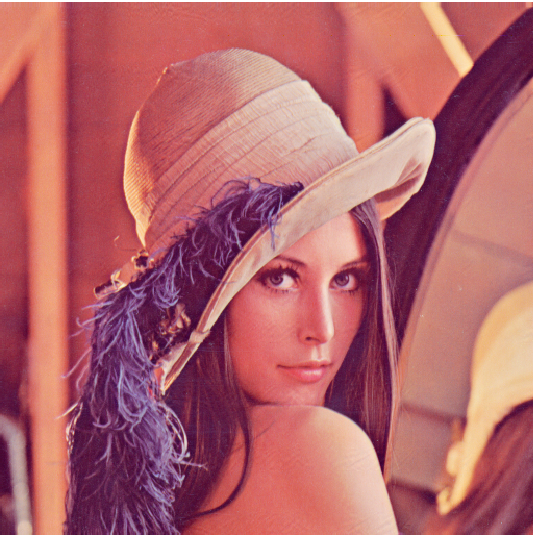}}
\caption{(a) True image of size $512\times 512$~~~~~~(b) Blurred noisy image.~~~~ (c) Reconstructed image.
}\label{im2}
\end{center}
\end{figure}

\section{Conclusion}
The concept of the Drazin, core-EP, DMP, MPD, and CMP  inverses for the third order tensor under the $M$-product have been studied along with a few characterizations of these inverses.
Solutions to multilinear systems based on theses inverses are also discussed.
High order Jacobi and Gauss-Seidel methods have been developed to solve such multilinear systems.
In addition, a few numerical examples and an application to image deblurring problem are also discussed.
The following problems can be considered in future research:
\begin{itemize}
    \item To develop iterative-based algorithms for computing the MP, Drazin, core-EP inverse under $M$-product.
    \item Investigate the perturbation bounds for the proposed generalized inverses under the $M$-product.
\item To study of other composite generalized inverses such as MPCEP, CEPMP  and bilateral inverses based on $M$-product.

\end{itemize}

\section*{Funding}
\begin{itemize}
    \item Ratikanta Behera acknowledges the support of the Science and Engineering Research Board (SERB), Department of Science and Technology, India, under Grant No. EEQ/2022/001065.
\item Jajati Keshari Sahoo is supported by the Science and Engineering Research Board (SERB), Department of Science and Technology, India, under Grant No. SUR/2022/004357.
\item Predrag Stanimirovi\' c is supported by the Science Fund of the Republic of Serbia (No. 7750185, Quantitative Automata Models: Fundamental Problems and Applications - QUAM) and
by the Ministry of Education, Science and Technological Development, Republic of Serbia, grant no. 451-03-65/2024-03/200124.
\end{itemize}

\section*{Conflict of Interest}
The authors declare no potential conflict of interest.

\section*{Data Availability}
Data sharing not applicable to this article as no datasets were generated or analyzed during the current study.

\section*{ORCID}
Jajati Keshari Sahoo~\orcidC \href{https://orcid.org/0000-0001-6104-5171}{ \hspace{2mm}\textcolor{lightblue}{https://orcid.org/0000-0001-6104-5171}} \\
Ratikanta Behera~\orcidA \href{https://orcid.org/0000-0002-6237-5700}{ \hspace{2mm}\textcolor{lightblue}{ https://orcid.org/0000-0002-6237-5700}}\\
Predrag S. Stanimirovi\'c~\orcidD \href{https://orcid.org/0000-0003-0655-3741 }{\hspace{2mm}\textcolor{lightblue}{https://orcid.org/0000-0003-0655-3741 }} \\

\bibliographystyle{abbrv}
\bibliography{Reference}
\end{document}